\newtheorem{theorem}{\sc Theorem}[section]
\newtheorem{lemma}[theorem]{\sc Lemma}
\newtheorem{proposition}[theorem]{\sc Proposition}
\newtheorem{corollary}[theorem]{\sc Corollary}
\newtheorem{remark}[theorem]{\sc Remark}
\newtheorem{hypothesis}[theorem]{\sc Hypothesis}
\newtheorem*{thmOp}{\sc Theorem \ref{Op}}
\newtheorem*{thmB}{\sc Theorem \ref{p_fitt}}
\newcommand{\ep}{\epsilon}
\newcommand{\PSL}{\mathrm{PSL}}
\newcommand{\al}{\alpha}
\newcommand{\pr }{\mathrm{Pr} }
\newcommand{\Gal }{C_G(\al) }
\newcommand{\Qal }{C_Q(\al) }
\newcommand{\F}{\mathbb{F}}
\begin{document}

\title[Commuting probability]{Finite groups, commuting probability, and coprime automorphisms}
\thanks{The first and the third authors are members of GNSAGA (INDAM) and were funded by  Project 2022PSTWLB (subject area: PE - Physical Sciences and
Engineering) ``Group Theory and Applications". The second author was partially supported by NSF grant DMS 1901595 and Simons Foundation Fellowship 609771.
The fourth author was  supported by  FAPDF and CNPq.
}

\author[E. Detomi]{Eloisa Detomi}
\address{Dipartimento di Matematica \lq\lq Tullio Levi-Civita\rq\rq, Universit\`a degli Studi di Padova, Via Trieste 63, 35121 Padova, Italy} 
\email{eloisa.detomi@unipd.it}
\author[R. M. Guralnick]{Robert M. Guralnick}
\address{ Department of Mathematics, University of
Southern California, Los Angeles, CA 90089-2532, USA}
\email{guralnic@usc.edu}

\author[M. Morigi]{Marta Morigi}
\address{Dipartimento di Matematica, Universit\`a di Bologna\\
Piazza di Porta San Donato 5 \\ 40126 Bologna \\ Italy}
\email{marta.morigi@unibo.it}
\author[P. Shumyatsky]{Pavel Shumyatsky}
\address{Department of Mathematics, University of Brasilia\\
Brasilia-DF \\ 70910-900 Brazil}
\email{pavel@unb.br}

\subjclass[2020]{20D20; 20D45; 20P05} 
\keywords{Commuting probability, Sylow subgroups, coprime automorphisms}

\begin{abstract} 
Given two subgroups $H,K$ of a finite group $G$, the probability that a pair of random elements from $H$ and $K$ commutes is denoted by $\pr(H,K)$. Suppose that a finite group $G$ admits a group of coprime automorphisms $A$ and let $\epsilon>0$. We show that,  if for any distinct primes  $p,q\in\pi(G)$ there is an $A$-invariant Sylow $p$-subgroup $P$ and an $A$-invariant Sylow $q$-subgroup $Q$ of $G$ for which $\pr([P,A],[Q,A])\ge\epsilon$, then $F_2([G,A])$ has $\epsilon$-bounded index in $[G,A]$ (Theorem \ref{mainA}). Here $F_2(K)$ stands for the second term of the upper Fitting seris of a group $K$. We also show that,  if $G=[G,A]$ and for any prime $p$ dividing the order of $G$ there is an $A$-invariant Sylow $p$-subgroup $P$ such that $\Pr([P,A], [P,A]^x)\geq\epsilon$ for all $x\in G$, then $G$ is bounded-by-abelian-by-bounded (Theorem \ref{p_fitt}).

 \end{abstract}
 \maketitle

\section{Introduction}

Given two subsets $X,Y$ of a finite group $G$, we write $\pr(X,Y)$ for the
  probability that two random elements $x\in X$ and $y\in Y$ commute. 
The number $\pr(G,G)$ is called the commuting probability of $G$.  It is well-known that $\pr(G,G)\leq5/8$ for any nonabelian group $G$. Another important result is the theorem of P. M. Neumann \cite{neumann} which states that if $G$ is a finite group and $\epsilon$ is  a positive number such that $\pr(G,G)\geq\epsilon$,  
 then $G$ has a normal subgroup $R$ such that both the index $|G:R|$ and the order of the commutator subgroup $[R,R]$ are $\epsilon$-bounded  (see also \cite{eberhard}). When a group $G$ has a structure as in P. M. Neumann's theorem, we say that $G$ is bounded-by-abelian-by-bounded. More generally, throughout the article we use the expression ``$(a,b,\dots)$-bounded" to mean that a quantity is bounded from above by a number depending only on the parameters $a,b,\dots$.
 A number of further results on commuting probability in finite groups can be found in \cite{gr,bgmn,DS-commprob,dlms}.

It is well-known that a finite group is nilpotent if and only if any two Sylow subgroups of coprime orders commute. The following theorem, established in \cite{dlms}, provides a probabilistic variation of this fact.

\begin{theorem}\label{old} 
If $\epsilon>0$ and $G$ is a finite group such that for any distinct primes  $p,q\in\pi(G)$ there is a Sylow $p$-subgroup $P$ and a Sylow $q$-subgroup $Q$ of $G$ for which $\pr(P,Q) \ge\epsilon$, then $F_2(G)$ has $\epsilon$-bounded index in $G$. 
\end{theorem}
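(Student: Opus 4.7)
The plan is to upgrade the probabilistic hypothesis $\pr(P,Q)\ge\epsilon$ on coprime Sylow pairs into genuine commutation between bounded-index subgroups, and then to assemble these local facts into an almost-nilpotent structure for $G/F(G)$, forcing $|G:F_2(G)|$ to be $\epsilon$-bounded.

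First, I would prove the key local lemma: if $P,Q$ are finite groups of coprime orders with $\pr(P,Q)\ge\epsilon$, then there exist characteristic subgroups $P_0\trianglelefteq P$ and $Q_0\trianglelefteq Q$ of $\epsilon$-bounded index such that $[P_0,Q_0]=1$. The starting point is an averaging argument: the set $X=\{x\in P : |Q:C_Q(x)|\le 2/\epsilon\}$ has density at least $\epsilon/2$ in $P$, and symmetrically for $Y\subseteq Q$. For each $x\in X$, the coprime action of $\langle x\rangle$ on the $q$-group $Q$ gives $Q=[Q,x]\,C_Q(x)$ and allows one to bound $|[Q,x]|$ in terms of $\epsilon$ (by reducing, via the Frattini series, to elementary abelian chief factors on which $|[Q,x]|=|Q:C_Q(x)|$). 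Invoking the BFC-theorem of B.\,H.~Neumann inside the semidirect product $Q\rtimes\langle X\rangle$ then yields a normal subgroup $Q_0$ of $Q$ of $\epsilon$-bounded index centralised by $\langle X\rangle$; the symmetric construction gives $P_0$, and a covering/averaging argument trades $\langle X\rangle$ for a bounded-index subgroup of $P$.

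Next I would pass to $\bar G=G/F(G)$, noting that $\pr$ cannot decrease under quotients, so the Sylow-commuting hypothesis is inherited. A preliminary reduction via a probabilistic variant of the Wielandt--Thompson solubility criterion (drawing on \cite{gr,bgmn}) shows that $G$ has a soluble normal subgroup of $\epsilon$-bounded index, so I may assume $\bar G$ is soluble and hence acts faithfully on $V=F(G)/\Phi(F(G))$, a completely reducible $\bar G$-module. Decomposing $V$ into its $q$-homogeneous components and applying the key local lemma to each Sylow pair $(\bar P_p,\bar P_q)$, the almost-commuting data translates into each Sylow $\bar P_p$ acting on the $q$-homogeneous component of $V$ through an $\epsilon$-bounded quotient. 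Collecting these bounds component by component, a subgroup of $\bar G$ of $\epsilon$-bounded index centralises $V$ and therefore lies in $F(\bar G)=F_2(G)/F(G)$, as required.

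The main obstacle is precisely that $|\pi(G)|$ is not $\epsilon$-bounded, so a naive intersection of bounded-index centralisers one per prime would blow up the index by a factor like $(2/\epsilon)^{|\pi(G)|}$. The module-theoretic reformulation above sidesteps this: on the $q$-homogeneous component, the action kernel is controlled by the single application of the local lemma to the pair $(\bar P_p,\bar P_q)$, and the different primes are handled in parallel rather than by successive intersection. A secondary hurdle is the reduction to the soluble case, without which the faithful action of $\bar G$ on $F(G)$ is lost and the representation-theoretic argument collapses; this is why the probabilistic Wielandt--Thompson step is an essential prerequisite rather than a cosmetic convenience.
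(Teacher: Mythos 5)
Your second stage contains a fatal gap. You assert that ``collecting these bounds component by component, a subgroup of $\bar G$ of $\epsilon$-bounded index centralises $V$ and therefore lies in $F(\bar G)$.'' This is false, and the module-theoretic reformulation does not, as you claim, sidestep the intersection blow-up that you yourself flagged. Take $G=\prod_{i=1}^{n}(C_{p_i}\rtimes C_3)$, where $p_1,\dots,p_n$ are distinct primes congruent to $1$ modulo $3$ and each $C_3$ acts fixed-point-freely on the corresponding $C_{p_i}$. Then $F(G)=\prod_i C_{p_i}$ and $\bar G=G/F(G)\cong (C_3)^{n}$. For $i\neq j$ the Sylow $p_i$- and $p_j$-subgroups commute, and for every $i$ one computes $\pr(C_{p_i},(C_3)^n)=\tfrac{p_i+2}{3p_i}>\tfrac13$, so the hypothesis holds with $\epsilon=1/3$ independently of $n$. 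Here $V=F(G)/\Phi(F(G))=F(G)$ and $\bar G$ acts on it faithfully, so $C_{\bar G}(V)=1$ while $|\bar G|=3^{n}$ is unbounded. For each prime $p_i$ the kernel of the action of the Sylow $3$-subgroup on the $p_i$-component of $V$ has index $3$, exactly as your per-prime local bound predicts, but the intersection of these kernels over $i$ is trivial: the ``parallel'' handling of primes does not avoid the blow-up. The conclusion of the theorem is of course satisfied here ($F_2(G)=G$), but only because $C_{\bar G}(V)$ is in general a proper subgroup of $F(\bar G)=F_2(G)/F(G)$; you have in effect conflated these two subgroups, and proving $|\bar G:C_{\bar G}(V)|$ bounded is a strictly stronger (and false) statement.

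The route that actually works, as in \cite{dlms} (and as mirrored for the coprime-action setting in Lemmas \ref{pq}--\ref{soluble} of the present paper), is structurally different: one first proves a two-prime lemma bounding $|G:F(G)|$ for $\{p,q\}$-groups $G=PQ$, deduces that for every prime $q$ exceeding a fixed bound $m=m(\epsilon)$ the Sylow $q$-subgroup lies in $F(G)$, and only then — once $\pi(G/F(G))$ is known to have $\epsilon$-bounded cardinality — combines the per-prime information. That reduction to boundedly many primes is precisely what is missing from your plan, and it is what legitimises the final intersection. Separately, your sketch of the key local lemma is not airtight: the set $X$ need not generate a proper subgroup of $P$, the expression ``$Q\rtimes\langle X\rangle$'' presupposes an action that is not given, and Neumann's BFC theorem bounds a derived subgroup rather than forcing $[P_0,Q_0]=1$. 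What is available (cf.\ Lemma \ref{bound_m} and Proposition \ref{mainAut} here, or the corresponding results of \cite{dlms,DS-commprob}) is bounded-index subgroups whose mutual commutator has bounded \emph{order}, not one that vanishes; any corrected argument must work with that weaker conclusion.
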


As usual, here $F_i(G)$ stands for the $i$th term of the upper Fitting series of the group $G$.

In this paper we handle similar issues for finite groups admitting coprime automorphisms. An automorphism $\al$ of a finite group $G$ is said to be coprime if $(|G|,|\al|) = 1$. If a group $A$ acts on a group $G$, we write $[G,A]$ to denote the subgroup generated by all $g^{-1}g^\al$, where $\al\in A$ and $g\in G$. Note that $[G,A]$ is an $A$-invariant normal subgroup of $G$. If $A$ is a group of coprime automorphisms of $G$, then $[G,A]$ is nilpotent if and only if  $[P,A]$ and $[Q,A]$ commute whenever $P$ is an $A$-invariant Sylow $p$-subgroup and $Q$ is an $A$-invariant Sylow $q$-subgroup with $p\ne q$ (see for example \cite[Theorem 1.4]{blms2023}). Here we will prove
 
\begin{theorem}\label{mainA} Let $\epsilon>0$, and let $G$ be a finite group admitting a group of coprime automorphisms $A$ such that for any distinct primes  $p,q\in\pi(G)$ there is an $A$-invariant Sylow $p$-subgroup $P$ and an $A$-invariant Sylow $q$-subgroup $Q$ of $G$ for which $\pr([P,A],[Q,A])\ge\epsilon$. Then $F_2([G,A])$ has $\epsilon$-bounded index in $[G,A]$.
\end{theorem}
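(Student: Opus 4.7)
The plan is to reduce to Theorem~\ref{old} applied to $H:=[G,A]$. Since $A$ acts coprimely, $H$ is $A$-invariant and $[H,A]=H$; for each prime $p\in\pi(H)$, if $P$ is an $A$-invariant Sylow $p$-subgroup of $G$, then $P^*:=P\cap H$ is an $A$-invariant Sylow $p$-subgroup of $H$. The coprime decomposition $P=[P,A]\cdot C_P(A)$ restricts to $P^*=[P,A]\cdot(C_P(A)\cap H)$, so $[P,A]\le P^*$, and the identity $[[P,A],A]=[P,A]$ yields $[P^*,A]=[P,A]$.

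The first task is to translate the hypothesis $\pr([P,A],[Q,A])\ge\epsilon$ into a bound $\pr(P^*,Q^*)\ge\epsilon'(\epsilon)$ on the full Sylow subgroups of $H$. Counting commuting pairs in $P^*\times Q^*$ against those in $[P,A]\times[Q,A]$ gives
\[
\pr(P^*,Q^*)\;\ge\;\frac{\pr([P,A],[Q,A])}{[P^*:[P,A]]\cdot[Q^*:[Q,A]]},
\]
so it suffices to bound the indices $[P^*:[P,A]]=|(C_P(A)\cap H)/(C_P(A)\cap[P,A])|$ by a function of $\epsilon$. Once this is in hand for every prime, Theorem~\ref{old} applied to $H$ immediately yields $|H:F_2(H)|\le f(\epsilon)$, which is the desired conclusion.

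The main obstacle is controlling the $A$-fixed part $C_P(A)\cap H$ modulo $[P,A]$: a priori this index can be arbitrarily large, and the hypothesis by itself says nothing directly about it. To overcome this I would argue by induction on $|G|$, passing to proper $A$-invariant normal subgroups or quotients of $G$ in which the hypothesis is preserved via the standard coprime-action reduction lemmas, and systematically peeling off large $A$-fixed pieces of each Sylow, so that in a minimal counterexample $[P,A]$ accounts for essentially all of $P^*$. A plausible alternative is to apply Theorem~\ref{old} not to $H$ itself but to an $A$-invariant subgroup $K\le H$ built so that its Sylow subgroups are by construction the $[P,A]$'s---for instance a Hall-type product of $[P_p,A]$'s over $p\in\pi(G)$---derive the $F_2$-bound on $K$, and then transfer it back to $H$ by a further coprime-action argument exploiting $H=[H,A]$.
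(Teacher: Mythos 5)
Your plan hinges on turning $\pr([P,A],[Q,A])\ge\epsilon$ into a lower bound for $\pr(P^*,Q^*)$ by bounding $[P^*:[P,A]]$, and you rightly flag this as the crux. But that index is genuinely unbounded under Hypothesis~\ref{00}, so the inequality you write is vacuous and cannot carry the reduction. The paper's own Section~4 example already defeats it: take $G=\prod_{i=1}^{s}H_{p_i}$ with $H_p=[B_pC,\alpha]$, where $|C|=3$, $\alpha$ is an involution inverting $C$, and $B_p\cong\mathbb{F}_p^{6}$ is the base of $C_p\wr(C\langle\alpha\rangle)$. Viewing $B_p$ as the regular $\mathbb{F}_pS_3$-module (trivial $\oplus$ sign $\oplus$ two copies of the standard $2$-dimensional module for $p>3$), one computes that the Sylow $p$-subgroup $P^*$ of $H_p$ is the sign-plus-standard summand, of order $p^5$, while $[P^*,\alpha]$ is its $(-1)$-eigenspace, of order $p^3$; so $[P^*:[P,A]]=p^2$. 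Yet $\pr([P^*,\alpha],[S,\alpha])=\tfrac13+\tfrac{2}{3p^2}>\tfrac13$ for the Sylow $3$-subgroup $S$, so Hypothesis~\ref{00} holds with $\epsilon=\tfrac13$ while $[P^*:[P,A]]\to\infty$ along $p=p_i$. Hence no induction that ``peels off $A$-fixed pieces'' can force $[P,A]$ to account for essentially all of $P^*$ in a minimal counterexample; that step would have to do all the work of the theorem itself. Your alternative, a ``Hall-type product'' of the $[P_p,A]$'s, is not a well-defined construction: those subgroups need not generate a group whose Sylow subgroups are the $[P_p,A]$'s, and even if they did, transferring the $F_2$-bound back to $H=[G,A]$ is precisely where the difficulty sits.

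The paper does not attempt a global reduction to Theorem~\ref{old}. It instead builds a coprime-action toolkit (most importantly Lemma~\ref{fitt}, that $|I_{F(G)}(\alpha)|\le n$ forces $|G|$ to be $n$-bounded when $G$ is soluble and $G=[G,\alpha]$), proves the soluble cyclic case prime by prime via Lemmas~\ref{pq}, \ref{big} and \ref{soluble}, handles the simple case (Proposition~\ref{simple}) using regular unipotent elements, the Borel--Tits theorem and Zsigmondy primes, treats the semisimple and trivial-radical cases (Theorem~\ref{old} is invoked only there, in Lemma~\ref{semisimple-base}, after the simple factors have already been bounded), and finally passes from cyclic $A$ to general $A$ via Lemma~\ref{lemmaA}. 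None of these steps is a consequence of, nor is it replaceable by, the reduction you sketch.
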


It is noteworthy that under the hypothesis of Theorem \ref{mainA} the index of $F([G,A])$  in $[G,A]$ can be arbitrarily large (see the example in Section 4).

Our next result is related to the theorem established in \cite{dgms} that if $P$ is a Sylow subgroup of a finite group $G$ such that $\Pr(P,P^x) \geq \epsilon$ for all $x \in G$, then the index $[P:O_p(G)]$ is $\epsilon$-bounded while $O_p(G)$ is bounded-by-abelian-by-bounded. We will prove the following theorem.

\begin{theorem} \label{Op} Let $G$ be a finite group admitting a group of coprime automorphisms $A$. Let $P$ be an $A$-invariant Sylow $p$-subgroup of $G$ and assume that \[\Pr([P,A], [P,A]^x) \geq \epsilon\] for all $x \in G$. Then the order of $[P,A]$ modulo $O_p(G)$ is $\epsilon$-bounded. 
\end{theorem}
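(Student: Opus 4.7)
The plan is to reduce Theorem~\ref{Op} to the main theorem of \cite{dgms} by identifying $H=[P,A]$ as a Sylow $p$-subgroup of the $A$-invariant normal subgroup $N=[G,A]$.

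First I would reduce to the case $O_p(G)=1$. Since $O_p(G)$ is characteristic hence $A$-invariant, $A$ acts coprimely on $\bar G=G/O_p(G)$; the image $\bar P$ of $P$ is an $A$-invariant Sylow $p$-subgroup and $[\bar P,A]$ is the image of $H$. Because commuting probabilities do not decrease under quotients, $\Pr([\bar P,A],[\bar P,A]^{\bar x})\ge\epsilon$ for every $\bar x\in\bar G$. Since $O_p(\bar G)=1$ and $|H\cdot O_p(G)/O_p(G)|\le|H|$, it suffices to bound $|H|$ in the case $O_p(G)=1$.

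Assume now $O_p(G)=1$ and set $N=[G,A]$. I would show that $H$ is an $A$-invariant Sylow $p$-subgroup of $N$. Since $N\trianglelefteq G$, the intersection $P\cap N$ is a Sylow $p$-subgroup of $N$, and the inclusion $H\le P\cap N$ is immediate from the fact that $A$ acts trivially on $P/[P,A]$. For the reverse inclusion, Fitting's decomposition applied to the $A$-invariant subgroup $P\cap N$ gives $P\cap N=[P\cap N,A]\cdot C_{P\cap N}(A)$; the first factor is contained in $H$ by definition, while the second equals $C_P(A)\cap[G,A]$ and can be shown to lie in $H$ by a standard coprime-action argument using the identity $[N,A]=N$ (itself a consequence of $[G,A,A]=[G,A]$). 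With $H$ identified as the Sylow $p$-subgroup of $N$, I invoke the main theorem of \cite{dgms} in $N$: the hypothesis $\Pr(H,H^y)\ge\epsilon$ holds in particular for every $y\in N\subseteq G$, so $|H/O_p(N)|$ is $\epsilon$-bounded. Finally, $O_p(N)$ is characteristic in $N$ and $N\trianglelefteq G$, so $O_p(N)$ is a normal $p$-subgroup of $G$, hence contained in $O_p(G)=1$; therefore $|H|$ is $\epsilon$-bounded, as required.

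The main obstacle is the Sylow identification, specifically verifying $C_P(A)\cap[G,A]\le[P,A]$: this is a delicate point of coprime-action theory, closely related to the structural results underlying \cite[Theorem~1.4]{blms2023} which are cited in the introduction. Should this identification require more than a short argument, a fallback is to adapt the proof of \cite{dgms} directly in our setting: apply a Neumann-type bound pointwise to each pair $(H,H^x)$, uniformize across $x\in G$ by pigeonholing over the finitely many subgroups of $H$ of a given bounded index, and use the $A$-invariance of $H$ together with coprime action to force the resulting uniform subgroup of $H$ into $O_p(G)$.
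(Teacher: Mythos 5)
Your proposed Sylow identification fails, and the paper itself contains an explicit counterexample. In the remark after Lemma~\ref{easy1}, the authors note that for $G=\PSL_2(q)$ with $q$ an odd prime power and $\alpha$ a nontrivial coprime (hence odd order) field automorphism, every $\alpha$-invariant Sylow $2$-subgroup $P$ satisfies $P\le C_G(\alpha)$, so $[P,\alpha]=1$, while $G=[G,\alpha]$ is simple and $P\cap[G,\alpha]=P\neq 1$. Thus $[P,A]$ need not be a Sylow $p$-subgroup of $[G,A]$ (here $O_p(G)=1$, so the reduction step does not help), and the key step of your argument, namely the claim that $C_{P\cap N}(A)\le[P,A]$, is false: in this example $C_{P\cap N}(A)=P$ but $[P,A]=1$. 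There is no ``standard coprime-action argument'' to rescue this; the two subgroups can have arbitrarily large index discrepancy. Consequently the invocation of the main theorem of \cite{dgms} applied to $N=[G,A]$ with $H=[P,A]$ as its Sylow $p$-subgroup is not available.

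The paper's actual proof proceeds along a genuinely different route. After reducing to $O_p(G)=1$, it does not attempt to place $[P,A]$ inside a subgroup where it is a full Sylow subgroup. Instead it bounds the maximal rank $r$ of a composition factor of $G$ that is a simple group of Lie type in characteristic $p$: in the cyclic case $A=\langle\alpha\rangle$ it passes to a minimal $\alpha$-invariant normal subgroup $N_0=S^{\langle\alpha\rangle}$ (with $S$ of rank $r$), and either $S$ itself is $\alpha$-invariant, in which case Proposition~\ref{Op-simple} (whose proof produces a regular unipotent element in $[P,\alpha]^x$ with small centralizer via Lemma~\ref{regular}) together with Lemma~\ref{simple-order} bounds $r$, or $S$ is not $\alpha$-invariant and an exponent argument on $\tilde P$ forces $r$ to be $\epsilon$-bounded. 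Having bounded $r$, the conclusion is read off from \cite[Theorem~1.3]{dgms}, which is applied to $G$ itself, not to $[G,A]$. The general $A$ then follows from Lemma~\ref{lemmaA}. You would need to replace the Sylow-identification step by an argument of this flavour to make the proof work.
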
 

Next, we deal with groups $G$ in which the above condition holds for every prime divisor of the order of $G$. It turns out that in this case the structure of $G$ is as restricted as in P. M. Neumann's theorem.

\begin{theorem}\label{p_fitt} Let $G$ be a finite group admitting a group of coprime automorphisms $A$. Assume that $G=[G,A]$ and for any prime $p$ dividing the order of $G$ there is an $A$-invariant Sylow $p$-subgroups $P$ such that \[\Pr([P,A], [P,A]^x) \geq \epsilon\] for all $x \in G$. Then $G$ is bounded-by-abelian-by-bounded.
\end{theorem}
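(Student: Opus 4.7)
My plan is to combine Theorem~\ref{Op} and Theorem~\ref{mainA} with P.~M.~Neumann's theorem via a large-prime dichotomy.

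First, apply Theorem~\ref{Op} to every prime $p \in \pi(G)$: writing $P_p$ for an $A$-invariant Sylow $p$-subgroup and $K_p := [P_p, A] \cap O_p(G)$, the index $|[P_p, A] : K_p|$ is bounded by some $c_1 = c_1(\epsilon)$. For distinct primes $p \ne q$ the subgroups $K_p \leq O_p(G)$ and $K_q \leq O_q(G)$ commute elementwise, because $O_p(G)$ and $O_q(G)$ are normal subgroups of coprime order. A routine count then gives
\[
\pr([P_p, A], [P_q, A]) \;\geq\; \frac{|K_p|\,|K_q|}{|[P_p, A]|\,|[P_q, A]|} \;\geq\; c_1^{-2},
\]
so Theorem~\ref{mainA} yields that $F_2([G, A]) = F_2(G)$ has $\epsilon$-bounded index in $G$.

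Next, setting $x = 1$ in the hypothesis gives $\pr([P_p, A], [P_p, A]) \geq \epsilon$ for each $p$, whence Neumann's theorem yields $|[[P_p, A], [P_p, A]]| \leq c_2 = c_2(\epsilon)$. Since $[P_p, A]$ is a $p$-group, its derived subgroup must be trivial whenever $p > c_2$, forcing $[P_p, A]$ abelian. Theorem~\ref{Op} also forces $[P_p, A] \leq O_p(G)$ for every $p > c_1$, since the $p$-group $[P_p, A] O_p(G)/O_p(G)$ has order at most $c_1$ and so is trivial. Outside a bounded subset $\Pi_0 \subseteq \pi(G)$, then, $[P_p, A]$ is an abelian subgroup of $O_p(G)$.

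The remaining task is to exhibit a normal subgroup $R$ of $G$ with $|G:R|$ and $|[R,R]|$ both $\epsilon$-bounded. The natural candidate for its large-prime part is $M := \prod_{p \notin \Pi_0}[P_p, A]$, which is abelian, because each factor is abelian and factors for distinct primes $p, q \notin \Pi_0$ lie inside $O_p(G)$ and $O_q(G)$ and so commute elementwise. The small-prime contributions $[P_p, A]$ for $p \in \Pi_0$ are each bounded-by-abelian-by-bounded (Neumann's theorem applied locally). Combined with the bound on $|G:F_2(G)|$ from the first step, this should assemble into the desired $R$.

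The main obstacle is that $[P_p, A]$ is only $A$-invariant and not visibly $G$-normal: its $G$-conjugates $[P_p, A]^g = [P_p^g, A^g]$ differ in general, and the normal closure of $M$ in $G$ is only generated by abelian subgroups, not abelian itself. Overcoming this requires using $G = [G, A]$ together with finer coprime-action analysis --- essentially extracting from the hypothesis $\pr([P_p, A], [P_p, A]^x) \geq \epsilon$ for all $x \in G$ a sharper local conclusion than Theorem~\ref{Op} records, ideally that $O_p(G)$ itself is bounded-by-abelian for each $p \notin \Pi_0$ --- together with control of $|G:F(G)|$ via the fact that $G/F(G)$ is generated by the bounded-order images $\overline{[P_p, A]}$ for $p \in \Pi_0$ (the large-prime images being trivial in this quotient). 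This local-to-global assembly is the technical heart of the proof.
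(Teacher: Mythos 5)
Your opening move --- using Theorem~\ref{Op} to deduce $\pr([P_p,A],[P_q,A]) \ge c_1^{-2}$ and hence, via Theorem~\ref{mainA}, that $|G:F_2(G)|$ is $\epsilon$-bounded --- is a correct but weaker conclusion than what the paper extracts at this stage: from the boundedness of each $[P,A]O_p(G)/O_p(G)$ given by Theorem~\ref{Op}, the paper invokes Lemma~2.4 of \cite{AGScoprime} to bound $|G:F(G)|$ itself. That stronger bound is what drives the rest of the argument.

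The genuine gap is the local-to-global assembly, and you acknowledge but do not close it. As you say, $M=\prod_{p\notin\Pi_0}[P_p,A]$ is abelian yet not $G$-normal, and the normal closure of a product of abelian subgroups need not be abelian; so $M$ cannot serve as the subgroup $R$ in the bounded-by-abelian-by-bounded conclusion. The paper resolves this by \emph{changing which Sylow subgroups are used}. Having bounded $|G:F(G)|$, it passes to $N=[F(G),A]^G$, which has $\epsilon$-bounded index in $G$ by a short counting argument via Lemma~\ref{easy0}, and it takes $P$ to be a Sylow $p$-subgroup of $F(G)$ rather than of $G$. Because $F(G)$ is nilpotent, $[P,A]=O_p([F(G),A])$ is \emph{normal} in $F(G)$, so the Sylow $p$-subgroup $[P,A]^G$ of $N$ is a product of $\epsilon$-boundedly many $G$-conjugates, each normal in $N$; the hypothesis $\pr([P,A],[P,A]^x)\ge\epsilon$ is inherited on these subgroups (shrinking from Sylows of $G$ to Sylows of $F$ only raises commuting probability, by Lemma~\ref{lem:basic}(2)). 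Lemma~\ref{product} --- a Neumann-BFC-style argument applied to a product of boundedly many normal subgroups with pairwise $\pr\ge\epsilon$ --- then gives that $[P,A]^G$ is bounded-by-abelian-by-bounded with bounds independent of $p$, hence abelian for all large $p$. Assembling the finitely many small-prime Sylows with the abelian large-prime part yields the theorem. The two missing ideas in your proposal are precisely this switch to Sylows of $F(G)$, which restores normality, and Lemma~\ref{product}, which provides the global BFC-type bound you were hoping to extract.
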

Unsurprisingly, the proofs of all main results in this paper depend on the classification of finite simple groups.

\section{Coprime action} 

We say that a group $A$ acts coprimely on a group $G$ if the automorphisms of $G$ induced by the elements of $A$ are coprime.
We denote by $C_G(\alpha)$ the fixed-point subgroup $\{x \in G \mid x^\alpha = x\}$ of an automorphism $\alpha$ and by $I_G(\alpha)$ the set of all elements of the form $g^{-1} g^\alpha$, where $g \in G$. Thus, $[G,\alpha]$ is generated by $I_G(\alpha)$. Observe that $|I_G(\alpha)| = |G:C_G(\alpha)|$.

In what follows we use the following well-known facts (see for example \cite{asch}), often without mention. 
\begin{lemma}\label{coprime}
Let a group $A$ act coprimely on a finite group $G$. The following  holds:
\begin{itemize}
    \item[(i)] $G = [G, A]C_G(A)$ and $[G,A]=[G,A,A]$;\
		\item[(ii)] if $G$ is abelian, then $G = [G,A]\times C_G(A)$;
    \item[(iii)] if $N$ is any $A$-invariant normal subgroup of $G$, we have $C_{G/N}(A) = C_G(A)N/N$;
		\item[(iv)] the group $G$ possesses an $A$-invariant Sylow $p$-subgroup for each prime $p \in \pi(G)$,  any two $A$-invariant Sylow $p$-subgroups are conjugate by an element of $C_G(A)$, and any  $A$-invariant  $p$-subgroup of $G$ is contained in an $A$-invariant Sylow $p$-subgroup;
		\item[(v)] if $N$ is any normal subgroup of $G$ such that $[N,A]=1$, then $[G,A]$ centralizes $N$.
    \end{itemize}
\end{lemma}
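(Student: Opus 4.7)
The plan is to derive each item of Lemma \ref{coprime} from the classical coprime action toolkit: Glauberman's fixed-Sylow theorem, the averaging trick on abelian sections, and a standard lifting argument along an $A$-chief series. None of the statements is new, which is why the authors cite \cite{asch}; my proposal is just to indicate how the pieces fit together.

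I would begin with (ii), which is the engine. When $G$ is abelian and $\gcd(|G|,|A|)=1$, the norm map $\nu(g)=\prod_{a\in A}g^a$ sends $G$ into $C_G(A)$ and restricts to multiplication by $|A|$ on $C_G(A)$; since $|A|$ is invertible on $G$, dividing $\nu$ by $|A|$ yields an $A$-equivariant projection $G\to C_G(A)$ whose kernel is easily seen to be $[G,A]$. Statement (v) is a short calculation inside $G\rtimes A$: for $n\in N$, $g\in G$, $\alpha\in A$ one has $n^{g^{-1}g^\alpha}=(n^{g^{-1}})^{g^\alpha}$, and setting $m=n^{g^{-1}}\in N$, the hypothesis $[N,A]=1$ means $\alpha$ fixes both $m$ and $m^g$, so the expression collapses to $m^g=n$; hence $[N,[G,A]]=1$. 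For (iii) I would induct on $|N|$ along an $A$-chief series of $N$, reducing to (ii) on each elementary abelian factor to lift fixed points through the quotient.

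For (iv) I would invoke Glauberman's theorem to produce an $A$-invariant Sylow $p$-subgroup $P$; conjugacy of any two $A$-invariant Sylow $p$-subgroups by an element of $C_G(A)$ then follows from a standard Frattini-style argument using (iii) applied to the transitive $G$-action on $\mathrm{Syl}_p(G)$, and the embedding of an $A$-invariant $p$-subgroup into an $A$-invariant Sylow $p$-subgroup is the usual refinement. For (i), the equality $G=[G,A]C_G(A)$ follows by induction on $|G|$: pick an $A$-invariant minimal normal subgroup $N$, apply the statement to $G/N$, lift a fixed-point representative via (iii), and handle the base case (abelian $N$) by (ii). The identity $[G,A,A]=[G,A]$ is then immediate from the first equality: writing each generator $g^{-1}g^\alpha$ with $g=hc$, $h\in[G,A]$, $c\in C_G(A)$, one gets $g^{-1}g^\alpha=(h^{-1}h^\alpha)^c\in[[G,A],A]$ since $c^\alpha=c$.

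The main obstacle is (iv): Glauberman's theorem is itself nontrivial, and for non-solvable $G$ its standard proof relies on the Feit-Thompson odd-order theorem. By contrast, (i), (iii), and (v) are elementary consequences of the averaging argument in (ii), so the real content of the lemma sits in Glauberman's theorem together with the averaging trick in the abelian case.
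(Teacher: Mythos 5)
The paper does not prove this lemma; it is presented as a list of standard facts about coprime action with a citation to Aschbacher's textbook, so there is no internal argument of the authors' to compare against. Your sketch is a sound account of how these facts are usually established, and the overall logical architecture you describe (averaging for the abelian case, Glauberman plus Feit--Thompson for the Sylow statements, lifting along a chief series for the quotient statement, and the decomposition $G=[G,A]C_G(A)$ feeding the identity $[G,A,A]=[G,A]$) is the standard one. Your computation for (v) is correct: with $m=n^{g^{-1}}\in N$ one has $n^{g^{-1}g^\alpha}=m^{g^\alpha}=(m^\alpha)^{g^\alpha}=(m^g)^\alpha=m^g=n$, using normality of $N$ and $[N,A]=1$ twice. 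Your derivation of $[G,A]\subseteq[G,A,A]$ from $G=[G,A]C_G(A)$ is also fine, provided you note (as one must) that $[G,A,A]$ is normal in $G$, which follows since it is normal in $[G,A]$ and is also normalized by $C_G(A)$ via $(k^{-1}k^\alpha)^c=(k^c)^{-1}(k^c)^\alpha$ for $c\in C_G(A)$.

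A few small imprecisions are worth flagging. First, (v) is not an ``elementary consequence of the averaging argument in (ii)'' as your summary paragraph suggests; your own proof of it is a direct conjugation computation independent of (ii). Second, in (ii) the claim that $\ker\nu=[G,A]$ is ``easily seen'' does need a one-line argument: $[G,A]\subseteq\ker\nu$ is immediate, and for the converse one observes that $\nu$ induces the $|A|$-th power map on $G/[G,A]$, which is injective by coprimality. Third, in (iv) the $C_G(A)$-conjugacy of two $A$-invariant Sylow $p$-subgroups is not literally an application of (iii), because the relevant coset there is a coset of $N_G(P)$, which is in general not normal in $G$; what one actually uses is Glauberman's lemma in full (for a transitive coprime action, the $A$-fixed points form a single $C_G(A)$-orbit), which simultaneously yields existence and conjugacy. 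Since you already invoke Glauberman for existence, this is a mislabeled tool rather than a gap. Similarly, the reduction of (iii) to (ii) along a chief series is really an application of the same averaging argument (vanishing of $H^1(A,N)$ for coprime abelian $N$) rather than a literal use of the statement of (ii); again the underlying idea is right.
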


Throughout, by a simple group we mean a finite nonabelian simple group. We will often use without special references the well-known corollary of the classification that if a simple group $G$ admits a group of coprime automorphisms $A$ of order $e\ne1$, then $G = L(q)$ is a group of Lie type, $A=\langle\al\rangle$ is cyclic, and $\alpha$ is a field automorphism. Furthermore, $C_G(\alpha) = L(q_0)$ is a group of the same Lie type defined over a smaller field such that $q = q_0^e$ (see \cite{GLS3}).

\begin{lemma}\label{simple-order}
Let $C$ be a positive integer and $G$ a finite  simple group of Lie type in characteristic $p$ admitting a nontrivial coprime
automorphism $\al.$ Suppose that order of $[P, \alpha]$ is at most $C$ whenever $P$ is an $\alpha$-invariant Sylow $p$-subgroup of $G$. Then the order of $G$ is $C$-bounded.
\end{lemma}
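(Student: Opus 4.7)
The plan is to combine the classification-based description of coprime automorphisms of simple groups recalled just before the lemma with the elementary lower bound
\[
|[P,\alpha]| \;\geq\; |I_P(\alpha)| \;=\; |P:C_P(\alpha)|.
\]
By that description, $G = L(q)$ is of Lie type in characteristic $p$ and $\alpha$ is a field automorphism of order $e = |\alpha|>1$, coprime to $|G|$, satisfying $C_G(\alpha) = L(q_0)$ with $q = q_0^e$. The goal is therefore to bound both $q$ and the Lie rank of $G$ in terms of $C$.

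First I would invoke the standard coprime-action fact that $C_P(\alpha)$ is a Sylow $p$-subgroup of $C_G(\alpha) = L(q_0)$. Letting $N$ denote the integer for which $|P| = q^N$ (the number of positive roots in the appropriate, possibly twisted, root system of $G$), the same invariant $N$ governs the Sylow order in $L(q_0)$, so $|C_P(\alpha)| = q_0^N$. Hence
\[
|P:C_P(\alpha)| = (q/q_0)^N = q_0^{N(e-1)}.
\]
Combining this with the displayed inequality and the hypothesis $|[P,\alpha]| \leq C$ gives
\[
q_0^{N(e-1)} \leq C.
\]

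Since $q_0 \geq 2$, $e-1\geq 1$ and $N \geq 1$, each of the three parameters $q_0$, $e$ and $N$ is individually $C$-bounded; in particular $q = q_0^e$ is $C$-bounded. Because $|L(q)|$ is a polynomial in $q$ whose degree depends only on the Lie rank of $G$ (and is bounded by a function of $N$), the order $|G|$ itself is $C$-bounded, as desired.

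The only delicate point I anticipate is verifying the identities $|P| = q^N$ and $|C_P(\alpha)| = q_0^N$ with a common $N$ across all Lie types, most notably for the Suzuki, Ree and Steinberg families, where $N$ must be understood in the twisted sense (for example $N=2$ for $^2B_2(q)$). Once this is read off the standard tables of orders of finite groups of Lie type, the numerical argument above closes the proof uniformly.
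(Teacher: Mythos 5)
Your proof is correct and essentially the same as the paper's: both use the CFSG description $C_G(\alpha)=L(q_0)$ with $q=q_0^e$, the identity $|P:C_P(\alpha)|=q_0^{N(e-1)}\le C$, and the comparison of $|P|$ with $|G|$ from the standard order tables. The only cosmetic difference is in the final numerical step: you bound $q_0$, $e$, and $N$ separately, whereas the paper observes (using $e\ge 3$) that $|[P,\alpha]|>|P|^{1/2}$ and concludes directly from $|G|\le|P|^3$; both close the argument.
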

\begin{proof} The proof follows the same lines as the proof of \cite[Lemma 2.4]{AGScoprime}. We will repeat it here for the reader's convenience.
Let $G = L(q)$ and let $C_G(\alpha) = L(q_0)$, where $q = q_0^e$ and $e=|\al|$. Note that $e\ge 3$, because $G$ is a finite simple group and $\alpha$ is coprime. We have that $|P|=q_0^{|{\al}|t}$ for some integer $t$ and $|C_P(\al)|=q_0^t$,  therefore  
\[|[P,\al]| \ge q_0^{t(|{\al}|-1)} > |P|^{1/2}.\]
 Comparing the orders of $G$ and $P$ (see e.g.    \cite[Table I]{GLS1}), we see that $|G| \le |P|^3$
   and the order of $G$ is $C$-bounded. 
\end{proof}
 We will need the following easy remark (see e.g. \cite{AGScoprime}).
\begin{lemma}\label{easy0} Assume that  $G$ is a finite group admitting a coprime automorphism $\alpha$ such that $G=[G,\alpha]$. If $|I_G(\alpha)| \leq m$, then the order of $G$ is $m$-bounded.\end{lemma}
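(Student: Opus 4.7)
The plan is to bound $|G:Z(G)|$ and $|G:[G,G]|$ separately, then combine them via Schur's theorem. Writing $H = C_G(\alpha)$ and using the standard identity $|I_G(\alpha)| = |G:C_G(\alpha)|$, the hypothesis becomes $|G:H| \le m$.

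First I would bound $|G:Z(G)|$ by passing to the normal core $C = \bigcap_{g \in G} H^g$ of $H$ in $G$. Since $|G:N_G(H)| \le |G:H| \le m$, the subgroup $H$ has at most $m$ conjugates in $G$, each of index at most $m$, so $|G:C| \le m^m$. The core $C$ is $\alpha$-invariant, since $(H^g)^\alpha = H^{g^\alpha}$ and $\alpha$ permutes $G$; and since $C \le H = C_G(\alpha)$, we have $[C,\alpha] = 1$. Lemma \ref{coprime}(v) then forces $[G,\alpha]$ to centralize $C$, and the global assumption $G = [G,\alpha]$ yields $C \le Z(G)$. Hence $|G:Z(G)| \le m^m$, and Schur's theorem supplies an $m$-bounded bound on $|[G,G]|$.

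Second, I would bound $|G:[G,G]|$ using coprime action on the abelianization. Set $\bar G = G/[G,G]$; the hypothesis $G = [G,\alpha]$ gives $[\bar G, \alpha] = \bar G$. Since $\bar G$ is abelian, Lemma \ref{coprime}(ii) provides the decomposition $\bar G = [\bar G,\alpha] \times C_{\bar G}(\alpha)$, which forces $C_{\bar G}(\alpha) = 1$. By Lemma \ref{coprime}(iii), $C_{\bar G}(\alpha) = C_G(\alpha)[G,G]/[G,G]$, so $C_G(\alpha) \le [G,G]$, giving $|G:[G,G]| \le |G:C_G(\alpha)| \le m$. Multiplying the two bounds, $|G| = |G:[G,G]| \cdot |[G,G]|$ is $m$-bounded.

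The only step that requires real care is the reduction $C \le Z(G)$, where the normal core construction, the $\alpha$-invariance of $C$, Lemma \ref{coprime}(v), and the global hypothesis $G = [G,\alpha]$ must all be combined in the right order; the remainder is a routine application of Schur's theorem together with the standard coprime decomposition of an abelian quotient.
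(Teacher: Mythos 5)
Your proof is correct. The paper gives no argument of its own for this lemma---it is recorded as an ``easy remark'' with a pointer to \cite{AGScoprime}---so there is no internal proof to compare against, but your route is the natural one and every step checks out: the normal core $C$ of $C_G(\alpha)$ is $\alpha$-invariant and normal, satisfies $[C,\alpha]=1$, and so (by Lemma \ref{coprime}(v) together with $G=[G,\alpha]$) lies in $Z(G)$, giving $|G:Z(G)|\le m^m$; Schur's theorem then bounds $|G'|$; and on the abelianization $\bar G=G/G'$ the coprime decomposition $\bar G=[\bar G,\alpha]\times C_{\bar G}(\alpha)$ together with $[\bar G,\alpha]=\bar G$ forces $C_G(\alpha)\le G'$, so $|G:G'|\le m$. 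Multiplying the two bounds finishes the argument.
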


In the sequel, we denote by $\pi(G)$ the set of prime divisors of the order of a group $G$ and,  somewhat abusing the terminology, 
 we assume that a Sylow $p$-subgroup is trivial if $p\not\in\pi(G)$. If $\pi$ is a set of primes, then $O_\pi(G)$ denotes the largest normal $\pi$-subgroup of $G$ and $O_{\pi'}(G)$ denotes the largest normal subgroup  of $G$ whose order is not divisible by primes in $\pi$. Moreover, $F(G)$ denotes the Fitting subgroup of $G$, which is the largest normal nilpotent subgroup of $G$.
Recall that, if $G$ is a finite soluble group, then $F(G)$ contains its centralizer. 
\begin{lemma}\label{easy1} Let  $G$ be a finite group admitting a coprime automorphism $\alpha$  such that $G=[G,\alpha]$. If $G$ is soluble, then $p\in\pi(G)$ if and only if $I_P(\al)\neq 1$ for some $\al$-invariant Sylow $p$-subgroup $P$ of $G$.
\end{lemma}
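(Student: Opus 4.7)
The plan is to prove the forward direction by contrapositive and induction on $|G|$; the reverse direction is immediate, since $I_P(\alpha)\neq 1$ forces $P\neq 1$ and hence $p\in\pi(G)$. So I would assume for contradiction that $p\in\pi(G)$ but $[P,\alpha]=1$ for some (and hence every, by Lemma~\ref{coprime}(iv) and the fact that conjugation by an element of $C_G(\alpha)$ commutes with $\alpha$) $\alpha$-invariant Sylow $p$-subgroup $P$, noting that $P\neq 1$.

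Inducting on $|G|$, I would choose a minimal $\alpha$-invariant normal subgroup $N$ of $G$; since $G$ is soluble, applying minimality to the characteristic subgroups $[N,N]$, the primary components of $N$, and $N^q$ forces $N$ to be an elementary abelian $q$-group for some prime $q$. The quotient $G/N$ inherits the hypotheses: $\alpha$ acts coprimely, $G/N$ is soluble, and $[G/N,\alpha]=G/N$. If $q\neq p$, then $PN/N\cong P$ is an $\alpha$-invariant Sylow $p$-subgroup of $G/N$ centralized by $\alpha$; by induction $p\notin\pi(G/N)$, so $p\nmid|G|$, contradicting $P\neq 1$. If instead $q=p$, then $N\leq P$ and the same induction applied to $G/N$ yields $P=N$, so that $P$ is elementary abelian and normal in $G$.

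To finish the case $q=p$, I would argue as follows. Since $[P,\alpha]=1$ and $P$ is $\alpha$-invariant and normal, Lemma~\ref{coprime}(v) applied with $N=P$ shows that $G=[G,\alpha]$ centralizes $P$, so $P\leq Z(G)$. Using coprime action on the soluble group $G$, pick an $\alpha$-invariant Hall $p'$-subgroup $H$ of $G$; since $P$ is central, $G=P\times H$ as an $\alpha$-invariant direct decomposition. For $g=xh$ with $x\in P$ and $h\in H$, the identity $[xh,\alpha]=[x,\alpha]^h[h,\alpha]=[h,\alpha]\in H$ yields $[G,\alpha]\leq H$, contradicting $G=[G,\alpha]$ since $P\neq 1$ is not contained in $H$.

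I expect the main obstacle to be the case $q=p$: the quotient step alone does not eliminate $P$, and one must combine Lemma~\ref{coprime}(v) (to push $P$ into $Z(G)$) with the existence of an $\alpha$-invariant Hall $p'$-complement in a soluble group before the hypothesis $G=[G,\alpha]$ can finally be brought to bear. The two easier ingredients---reducing $N$ to be elementary abelian and passing cleanly to $G/N$---are routine once one notes that coprime action and the relation $[G,\alpha]=G$ both descend to quotients.
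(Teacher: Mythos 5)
Your proof is correct. It takes the same high-level route as the paper --- a minimality/induction argument whose decisive step is Lemma~\ref{coprime}(v), namely that a normal subgroup centralized by $\alpha$ lies in the centre of $G=[G,\alpha]$ --- but the reduction is carried out differently. The paper kills $O_{p'}(G)$ in one blow and then exploits the standard fact that $C_G(O_p(G))\le O_p(G)$ when $G$ is soluble with $O_{p'}(G)=1$; combined with $O_p(G)\le P\le C_G(\alpha)$ and Lemma~\ref{coprime}(v), this forces $G = O_p(G) = P$, whence $G=[G,\alpha]=[P,\alpha]=1$, a contradiction. You instead pass to $G/N$ for a minimal $\alpha$-invariant normal subgroup $N$ and split into cases on the characteristic $q$ of $N$; the case $q\neq p$ collapses at once, while the case $q=p$ gives $P=N\le Z(G)$ and then uses an $\alpha$-invariant Hall $p'$-complement $H$ to conclude $[G,\alpha]\le H<G$. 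Both routes are valid; the paper's is slightly tighter since the Fitting-centralizer step avoids the case split and the appeal to Hall theory. One small remark: the existence of an $\alpha$-invariant Hall $p'$-subgroup is not among the facts recorded in Lemma~\ref{coprime}, but at that point you do not actually need it --- once $P\le Z(G)$ is a normal Sylow $p$-subgroup, Schur--Zassenhaus gives $G=P\times K$, and $K$ is then the unique Hall $p'$-subgroup $O_{p'}(G)$, characteristic and therefore automatically $\alpha$-invariant.
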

\begin{proof} Let $G$ be a counterexample of minimal order. Then $G$ possesses a nontrivial $\al$-invariant Sylow $p$-subgroup $P$ such that $I_P(\al)=1$. Because of minimality, $O_{p'}(G)=1$ and $C_G(O_p(G)) \le O_p(G)$. As $P\leq C_G(\al)$, it follows from Lemma \ref{coprime} that $O_p(G) \le Z(G)$. So $G \le C_G(O_p(G)) \le O_p(G)$ and therefore $G=P$. Since $G=[G, \al]$, we get that $G=1$, a contradiction.
\end{proof}

We remark that the above lemma fails for non-soluble groups. Indeed, let $q$ be an odd prime power such that the group $G=\PSL_2(q)$  admits a coprime automorphism $\alpha \neq 1$. Then  $C_G(\alpha) = \PSL_2(q_0)$, where $q = q_0^e$ and $e=|\al|$.  Note that $e$ is odd, because $\al$ is coprime,   therefore $|G|/|C_G(\al)| = (q_0^{2e}-1)/(q_0-1)$ is odd as well.  It follows that any $\alpha$-invariant Sylow $2$-subgroup of $G$ is contained in $C_G(\alpha)$. On the other hand, it is clear that $G=[G,\al]$.

We will need Theorem B of \cite{IsaacsHartley}. We record it in the next lemma for the reader's convenience.

If $A$ is a finite group and $k$ is a field of characteristic not dividing $|A|$, 
then any $k A$-module $V$ is a direct sum of simple components. Let  $S$ be a given simple $k A$-module and let $m_{A,S}(V)$  denote the number of simple components of $V$ 
isomorphic to $S$. 

\begin{lemma}\label{HIthmB} Let $A$ be an arbitrary finite group. Then there exists a number $\gamma = \gamma_A > 0$, depending only on $A$, with the following property: Let $A$ act coprimely on a finite soluble group $G$, and let $k$ be any field with characteristic not dividing $|A|$. Let $V$ be any simple $k AG$-module and let $S$ be any $k A$-module which appears as a component of the restriction $V_A$. Then $m_{A,S}(V) \geq \gamma \dim V$.
\end{lemma}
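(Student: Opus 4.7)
The plan is to prove the lemma by simultaneous induction on $|A|$ and $|G|$, defining $\gamma_A$ recursively as a minimum over subgroups of $A$ starting from the baseline value $1/|A|$. For the base case $G=1$, the module $V$ is itself a simple $kA$-module, hence $V\cong S$, $m_{A,S}(V)=1$, and $\dim V\leq |A|$; any $\gamma_A\leq 1/|A|$ suffices.

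For the inductive step, select a minimal $A$-invariant normal subgroup $M$ of $G$. Solubility of $G$ together with the coprime action of $A$ forces $M$ to be elementary abelian. If $M$ lies in the kernel of the action on $V$, then $V$ factors through the semidirect product $A(G/M)$ and the inductive hypothesis on $|G|$ yields the bound with constant $\gamma_A$. Otherwise, Clifford's theorem applied to $M\triangleleft AG$ decomposes $V_M=e(W_1\oplus\cdots\oplus W_r)$ into homogeneous components transitively permuted by $AG$. Setting $T=\mathrm{Stab}_{AG}(W_1)$, one arranges $T=BH$ with $B\leq A$ a complement to $H=T\cap G$ (via Schur--Zassenhaus, using the coprime hypothesis), and the Clifford correspondence writes $V=\mathrm{Ind}_T^{AG}(U)$ for a simple $kT$-module $U$ lying over $W_1$.

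Applying the inductive hypothesis to the triple $(B,H,U)$---strictly smaller either in $|G|$ (when $H<G$) or in $|A|$ (in the degenerate case $B<A$, $H=G$)---yields $m_{B,S'}(U)\geq \gamma_B\dim U$ for each component $S'$ of $U_B$. Transferring this bound back to $V_A$ via Mackey's formula
\[V_A=\mathrm{Ind}_T^{AG}(U)\big|_A=\bigoplus_{x\in A\backslash AG/T}\mathrm{Ind}_{A\cap T^x}^{A}(U^x|_{A\cap T^x})\]
and Frobenius reciprocity produces a lower bound on $m_{A,S}(V)$ of the form $(\gamma_B/[A:B])\dim V$ for every component $S$ of $V_A$; setting $\gamma_A=\min_{B\leq A}\gamma_B/[A:B]$, well-defined by the outer induction on $|A|$, closes the recursion. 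The principal obstacle is executing this Mackey bookkeeping so the constant depends only on $A$---not on $\mathrm{char}(k)$, $G$, or $V$---and handling the fully degenerate case $T=AG$ by extending $U$ through the coprime action rather than inducing further; the fact that $\mathrm{char}(k)$ may divide $|G|$ (so $V_G$ need not be semisimple) adds a further wrinkle, but Clifford's theorem still gives the semisimplicity of $V_M$ required by the argument.
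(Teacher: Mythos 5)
The paper does not prove this lemma; it explicitly records it as Theorem B of Hartley and Isaacs \cite{IsaacsHartley} and then uses it as a black box. So the relevant question is whether your sketch constitutes a correct proof of that cited theorem, and I do not think it does.

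Your Clifford-theoretic reduction (pick a minimal $A$-invariant normal $M$, decompose $V_M$ into homogeneous components, pass to the inertia subgroup $T$, split $T=BH$ by Schur--Zassenhaus) is the natural opening move, but the proposal has a genuine gap precisely where you flag ``the fully degenerate case $T=AG$.'' When $V_M$ is $M$-homogeneous you have $H=G$ and $B=A$, so $U=V$ and the induction on $(|A|,|G|)$ is circular: there is nothing smaller to recurse on, and ``extending $U$ through the coprime action'' is not an argument, since after extending you are facing the identical instance of the problem. Over an algebraically closed field one can check that in this case $M$ is forced to be cyclic of prime order, central in $G$, and acting faithfully by scalars; this is exactly the configuration that makes the theorem hard, and in Hartley--Isaacs it is handled not by minimal normal subgroups but by a much more delicate analysis built around the Fitting subgroup, extraspecial normal subgroups, and orbit-counting estimates. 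Separately, the Mackey step is asserted rather than established: the claimed bound $m_{A,S}(V)\ge(\gamma_B/[A:B])\dim V$ does not follow mechanically from
\[V_A=\bigoplus_{x}\mathrm{Ind}_{A\cap T^x}^{A}\bigl(U^x|_{A\cap T^x}\bigr),\]
because a given simple $kA$-module $S$ need not occur in every double-coset summand, and you give no reason why the summands in which $S$ does appear carry a definite fraction of $\dim V$. So the sketch is a reasonable outline of the easy reductions, but the core of the theorem -- the homogeneous case and the uniform multiplicity count -- is left unproved.
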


\begin{lemma}\label{easy2}   Let  $G$ be a finite soluble group admitting a coprime automorphism $\alpha$ such that $G=[G,\alpha]$.  Let $M$ be a minimal $\al$-invariant normal subgroup of $G$ and assume that $|I_M(\al)|\leq n$. Then either $M$ is central or the order of $M$ is $(|\al|,n)$-bounded.
\end{lemma}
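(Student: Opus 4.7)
The plan is to realize $M$ as a simple $\mathbb{F}_p[A\ltimes G]$-module for an appropriate prime $p$ and then invoke the Hartley--Isaacs lemma (Lemma \ref{HIthmB}) to bound $\dim_{\mathbb{F}_p} M$ by $\dim_{\mathbb{F}_p}[M,\alpha]$, at which point the hypothesis $|I_M(\alpha)|\le n$ gives the required bound on $|M|$.

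First I would verify the module-theoretic setup. Setting $A=\langle\alpha\rangle$ and using that $G$ (hence $M$) is soluble, the characteristic subgroups $[M,M]$, the Sylow $p$-subgroups of $M$, and $M^p$ are all $A$-invariant and normal in $G$; by minimality of $M$ each is either trivial or equal to $M$, which forces $M$ to be an elementary abelian $p$-group for a single prime $p\in\pi(G)$. Coprimality of $\alpha$ gives $p\nmid|A|$. Moreover, the $\mathbb{F}_p[AG]$-submodules of $M$ are exactly the $A$-invariant subgroups of $M$ that are normal in $G$, so minimality of $M$ says precisely that $M$ is a simple $\mathbb{F}_p[AG]$-module.

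Next I would dispose of the case $[M,\alpha]=1$ at once: here $A$ centralizes the normal subgroup $M$ of $G$, so by Lemma \ref{coprime}(v) the subgroup $[G,A]=G$ centralizes $M$, forcing $M\le Z(G)$. Otherwise $[M,\alpha]\ne 1$, and since $M$ is abelian one has $[M,\alpha]=I_M(\alpha)$, so the hypothesis gives $|[M,\alpha]|\le n$.

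To conclude, I would apply Lemma \ref{HIthmB} with $k=\mathbb{F}_p$ (whose characteristic is coprime to $|A|$) to the simple module $M$: picking any non-trivial simple $\mathbb{F}_pA$-summand $S$ appearing in $M_A$ (one exists because $[M,\alpha]\ne 1$), the lemma yields $m_{A,S}(M)\ge\gamma_A\dim_{\mathbb{F}_p}M$. In the coprime decomposition $M=C_M(\alpha)\oplus[M,\alpha]$ of $\mathbb{F}_pA$-modules, all non-trivial $A$-components of $M$ lie inside $[M,\alpha]$, so
\[
\dim_{\mathbb{F}_p}[M,\alpha]\ \ge\ m_{A,S}(M)\cdot\dim_{\mathbb{F}_p}S\ \ge\ \gamma_A\dim_{\mathbb{F}_p}M,
\]
which combined with $p^{\dim_{\mathbb{F}_p}[M,\alpha]}=|[M,\alpha]|\le n$ gives $|M|\le n^{1/\gamma_A}$, a bound depending only on $|\alpha|$ and $n$. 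There is no serious obstacle once $M$ is set up as a simple $\mathbb{F}_p[AG]$-module; the only conceptual content is the Hartley--Isaacs uniformity, which forces a small non-trivial part $[M,\alpha]$ to drag the whole of $M$ down with it.
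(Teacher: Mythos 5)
Your proof is correct and follows essentially the same route as the paper: treat $M$ as a simple $\mathbb{F}_p[G\langle\alpha\rangle]$-module, dispose of the $[M,\alpha]=1$ case via Lemma~\ref{coprime}, and otherwise invoke Lemma~\ref{HIthmB} together with the coprime decomposition $M=C_M(\alpha)\oplus[M,\alpha]$ to bound $\dim M$ by a constant multiple of $\dim[M,\alpha]$. The only cosmetic difference is that you record the slightly sharper inequality $\dim[M,\alpha]\ge m_{A,S}(M)\dim S$ rather than $\dim[M,\alpha]\ge m_S$, which changes nothing.
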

\begin{proof}
 If $I_M(\al)= 1$, then $M\leq C_G(\al)$ and so  $M$ is contained in the centre  of $G$ by Lemma \ref{coprime}.

 Assume that $|I_M(\al)|\geq 2$. Because of the minimality of $M$ it follows that $M$ is an elementary abelian $p$-group for some prime $p$. Let $|M|=p^t$. We regard $M$ as an irreducible $G\langle\al\rangle$-module.  As $|I_M(\al)|\geq 2$, there is  a nontrivial simple 
 $\langle\al\rangle$-submodule $S$ of $M$. Let $\gamma$ be the constant as in Lemma \ref{HIthmB}, which only depends on $|\al|$. Note that $M=[M,\al]\times C_M(\al)$, by Lemma \ref{coprime}, and all $\langle\al\rangle$-submodules of $M$ isomorphic to $S$ are contained in $[M,\al]$. If  $S$ appears in $M$ with multiplicity $m_S$ then  ${\mathrm{dim}}[M,\al]\ge m_S\ge \gamma\, {\mathrm{dim}} M=\gamma\, t$.
Note that $|[M,\al]|=|M:C_M(\al)|=|I_M(\al)|\leq n$.
Therefore $n\ge |[M,\al]|\ge p^{\gamma t}$, which implies that $|M|=p^t\le n^{\frac 1\gamma}$ is $(|\al|,n)$-bounded.
\end{proof} 

A group is said metanilpotent if it possesses a normal nilpotent subgroup $N$ such that $G/N$ is nilpotent.
In the sequel $\gamma_\infty(G)$ stands for the intersection of the terms of the lower central series of a group $G$.

\begin{lemma}\label{gamma_infty}\cite[Lemma 2.4]{AST} If G is a finite metanilpotent group, then 
$\gamma_\infty(G) = \prod_p[K_p, H_{p'}]$, where $K_p$  is a Sylow $p$-subgroup of $\gamma_\infty(G)$ and $H_{p'}$ is a Hall $p'$-subgroup of $G$.
\end{lemma}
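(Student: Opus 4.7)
The plan is to prove the stronger statement that $K_p = [K_p, H_{p'}]$ for each prime $p$, where $K_p$ is the Sylow $p$-subgroup of $K := \gamma_\infty(G)$. Since $G$ is metanilpotent, $G/F(G)$ is nilpotent and hence $K \le F(G)$, so $K$ is itself nilpotent with Sylow decomposition $K = \prod_p K_p$, each $K_p$ characteristic in $K$ and therefore normal in $G$. The inclusion $\prod_p [K_p, H_{p'}] \le K$ is immediate. Moreover, because $\gamma_\infty$ stabilizes in a finite group, $K = [K, G]$, and extracting the Sylow $p$-part (which is valid since each $[K_q, G] \le K_q$ is a $q$-group) yields $K_p = [K_p, G]$ for every $p$.

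For the reverse inclusion, I would fix $p$, put $M := [K_p, H_{p'}]$, and form the normal closure $N := M^G$. Because different Hall $p'$-subgroups of $G$ are $G$-conjugate and $K_p$ is normal, $N$ coincides with $\langle [K_p, H] : H \text{ a Hall } p'\text{-subgroup of } G \rangle$ and is normal in $G$. Passing to $\bar G := G/N$, the image $\bar K_p$ is centralized by every Hall $p'$-subgroup of $\bar G$, so $C_{\bar G}(\bar K_p)$ contains all of them and the quotient $\bar G/C_{\bar G}(\bar K_p)$ is a $p$-group. Choosing a Sylow $p$-subgroup $\bar P$ of $\bar G$ containing $\bar K_p$ gives $\bar G = \bar P \cdot C_{\bar G}(\bar K_p)$, and since $\gamma_\infty$ commutes with quotients, $\bar K_p = [\bar K_p, \bar G] = [\bar K_p, \bar P]$. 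But $\bar K_p$ is a normal $p$-subgroup of the nilpotent $p$-group $\bar P$, so $[\bar K_p, \bar P] < \bar K_p$ unless $\bar K_p = 1$; this forces $\bar K_p = 1$, hence $K_p = N$.

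The remaining task is to upgrade $K_p = N$ to $K_p = M$, i.e., to verify that $M$ itself is $G$-invariant. The plan is an induction on $|G|$ using a minimal normal subgroup $V \le K_p$ of $G$: the inductive hypothesis applied to the metanilpotent quotient $G/V$ gives $K_p = MV$. If $[V, H_{p'}] = V$, then $V \le M$ and $K_p = M$ at once. If instead $[V, H_{p'}] = 1$, irreducibility of $V$ as an $\F_p G$-module together with the fact that $H_{p'}$ lies in the kernel of the $G$-action on $V$ forces $V \le Z(G)$, and minimality gives $|V| = p$; in this case one analyzes the $G$-module structure on $K_p^{\mathrm{ab}}$, which would decompose as $M^{\mathrm{ab}} \oplus V$ if $V \not\le M$, in order to contradict $K_p = [K_p, G]$, completing the induction. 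The main technical obstacle of the proof is precisely this inductive closing, which requires careful control of the chief factors of $K_p$ together with the interaction between the coprime $H_{p'}$-action and the $p$-local structure inherited from $F(G)$.
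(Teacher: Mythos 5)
Your reduction to showing $K_p=[K_p,H_{p'}]$ for each prime $p$, and your argument that $K_p=[K_p,H_{p'}]^G$ (pass to $\bar G=G/N$, observe $C_{\bar G}(\bar K_p)$ contains every Hall $p'$-subgroup so $\bar G/C_{\bar G}(\bar K_p)$ is a $p$-group, then $\bar K_p=[\bar K_p,\bar P]$ forces $\bar K_p=1$) are both correct. The problem is the third part, which you yourself flag as the ``main technical obstacle'': it is a genuine gap, not a routine closing.

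Two issues. First, the case split ``$[V,H_{p'}]=V$ or $[V,H_{p'}]=1$'' is unjustified as written: for an irreducible $\F_p G$-module $V$ the subspace $[V,H_{p'}]$ need not be $G$-invariant, so irreducibility alone does not force the dichotomy. It is in fact forced here, but one has to say why: $V\le K_p\le F(G)$, so $V\le Z(F(G))$ and $G/C_G(V)$ is a quotient of the nilpotent group $G/F(G)$; being nilpotent and acting faithfully and irreducibly on an $\F_p$-module it is a $p'$-group, so $P$ centralizes $V$, hence $[V,H_{p'}]=[V,G]\in\{1,V\}$. Second, and more seriously, the closing step in the central case is not correct as sketched. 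The decomposition $K_p^{\mathrm{ab}}=M^{\mathrm{ab}}\oplus V$ is only an $H_{p'}$-module decomposition, not a $G$-module one, since $M^{\mathrm{ab}}$ is $G$-invariant precisely when $M$ is --- which is what you are trying to prove. Moreover, the mere coexistence of a trivial one-dimensional $G$-submodule $V\le W$ with $[W,G]=W$ is \emph{not} a contradiction for an abstract $\F_p[G]$-module $W$ over a metanilpotent $G$ (e.g., the two-dimensional uniserial $\F_3[S_3]$-module with socle the trivial module and head the sign module has $[W,G]=W$ while $C_W(H_{p'})$ is the one-dimensional socle). So any correct argument must exploit that $K_p$ is specifically the $p$-part of $\gamma_\infty(G)$, not merely a normal $p$-subgroup with $[N,G]=N$; the sketch does not.

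There is a shortcut that avoids the induction entirely and would repair the proof. Set $K_{p'}=O_{p'}(\gamma_\infty(G))$; since $K_{p'}\le H_{p'}$, passing to $G/K_{p'}$ reduces to the case where $\gamma_\infty(G)=K_p$ is a $p$-group. Now $G/K_p$ is nilpotent, so $K_pH_{p'}/K_p$ is its (normal) Hall $p'$-subgroup, whence $L:=K_pH_{p'}\trianglelefteq G$. For $x$ in a Sylow $p$-subgroup $P$ of $G$, $H_{p'}^x$ is a Hall $p'$-subgroup of $L^x=L$, hence by Schur--Zassenhaus $H_{p'}^x=H_{p'}^k$ for some $k\in K_p$, and since $[K_p,H_{p'}]$ is normal in $K_pH_{p'}$ (so $K_p$-invariant) we get $[K_p,H_{p'}]^x=[K_p,H_{p'}^k]=[K_p,H_{p'}]^k=[K_p,H_{p'}]$. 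Thus $M=[K_p,H_{p'}]$ is normalized by $P$ and by $H_{p'}$, i.e.\ $M\trianglelefteq G$; combined with your $K_p=M^G$ (or directly: $K_p/M$ is a $G$-invariant $p$-group with $H_{p'}$ acting trivially and $[K_p/M,G]=K_p/M$, forcing $K_p/M=1$), this gives $K_p=M$.
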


\begin{lemma}\label{easy3}  Assume that  $G$ is a finite soluble group admitting a coprime automorphism $\alpha$ such that $G=[G,\alpha]$. Let $D=\gamma_\infty(G)$ and suppose that $|I_D(\al)|\leq n$. Then the order of $D$ is $(|\al|,n)$-bounded.
\end{lemma}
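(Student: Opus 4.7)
The plan is by induction on $|G|$, using Lemma \ref{easy2} to analyse minimal $\alpha$-invariant normal subgroups of $G$, after two preliminary observations. First, by Lemma \ref{coprime}(i), $[D,\alpha]=[[D,\alpha],\alpha]$, so applying Lemma \ref{easy0} to the group $[D,\alpha]$ (noting $|I_{[D,\alpha]}(\alpha)|\le|I_D(\alpha)|\le n$) yields an $n$-bound on $|[D,\alpha]|$. Second, $D=[D,\alpha]^G$: writing $N=[D,\alpha]^G$, the automorphism $\alpha$ centralises $D/N$ by construction, and then Lemma \ref{coprime}(v) forces $G=[G,\alpha]$ to centralise it as well, so $D/N\le Z(G/N)$; combined with the nilpotence of $G/D$, this makes $G/N$ nilpotent, whence $D\le N$.

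For the inductive step, let $G$ be a counterexample of minimal order and $M$ a minimal $\alpha$-invariant normal subgroup of $G$. If $M\cap D=1$, then $\bar G=G/M$ satisfies $\gamma_\infty(\bar G)\cong D$ with $|I|$-bound still $n$ (using Lemma \ref{coprime}(iii)), and induction bounds $|D|$, a contradiction. If instead $M\le D$, Lemma \ref{easy2} asserts that either $|M|$ is $(|\alpha|,n)$-bounded---in which case passing to $G/M$ concludes by induction on $|G|$, since $\gamma_\infty(G/M)=D/M$ has $|I|\le n$ and $|D|=|M|\cdot|D/M|$---or $M\le Z(G)$. When $M\le Z(G)$ but $\alpha$ acts non-trivially on $M$, minimality forces $M=[M,\alpha]$ and hence $|M|=|I_M(\alpha)|\le n$, again a bounded case.

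The delicate remaining subcase is $M\le Z(G)\cap C_G(\alpha)$ with $|M|=p$ prime. Induction in $G/M$ bounds $|D/M|$, so it only remains to bound the prime $p$. Here one exploits that $G=[G,\alpha]$ gives $C_{G^{\mathrm{ab}}}(\alpha)=1$, so every element of $C_G(\alpha)$ lies in $[G,G]$; in particular $M\le [G,G]$, and a Burnside-type transfer argument applied to an $\alpha$-invariant Sylow $p$-subgroup $P$ containing $M$ pins $M$ inside $[P,P]$. Combining Lemma \ref{easy1} (which ensures $I_P(\alpha)\ne 1$) with the preliminary bound on $|[D,\alpha]|$ and the equality $D=[D,\alpha]^G$, a careful simultaneous analysis of the $\alpha$- and $G$-actions on $P$ yields a bound on $p$ in terms of $|\alpha|$ and $n$, completing the induction. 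This last step---bounding the prime $p$ in the central, $\alpha$-fixed subcase---is the main obstacle of the argument, and its resolution is what makes essential use of both preliminary observations.
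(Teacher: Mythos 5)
Your preliminary observations are correct and pleasantly slick: $|[D,\alpha]|$ is $n$-bounded via Lemma \ref{easy0} applied to $[D,\alpha]=[[D,\alpha],\alpha]$, and $D=[D,\alpha]^G$ follows cleanly from Lemma \ref{coprime}(v) plus the nilpotence of $G/D$. The case analysis on a minimal $\alpha$-invariant normal subgroup $M$ is also sound as far as it goes: $M\cap D=1$ reduces to a smaller group, $M\le D$ with $|M|$ bounded (via Lemma \ref{easy2}) likewise, and the centrality/minimality argument in the sub-case $[M,\alpha]\ne 1$ is right.

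However, the argument as written is incomplete: the final sub-case, $M\le Z(G)\cap C_G(\alpha)\cap D$ with $|M|=p$, is not actually proved. You correctly identify it as ``the main obstacle'' and then offer only a sketch: ``a Burnside-type transfer argument\dots pins $M$ inside $[P,P]$'' and ``a careful simultaneous analysis of the $\alpha$- and $G$-actions on $P$ yields a bound on $p$.'' Neither step is carried out, and it is not at all clear that this route closes. In particular, knowing $M\le G'$ (from $C_{G^{\mathrm{ab}}}(\alpha)=1$) and even $M\le[P,P]$ does not obviously produce a bound on the prime $p$ from $|[D,\alpha]|\le n$ and $D=[D,\alpha]^G$ alone; the conjugates $[D,\alpha]^g$ are boundedly small but there may be boundlessly many of them, and a central, $\alpha$-fixed cyclic factor can sit ``diagonally'' inside their product without lying in any single conjugate. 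So this is a genuine gap, not merely an omitted routine verification.

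For comparison, the paper avoids ever meeting this sub-case. It inducts on $n$ rather than $|G|$, first quotients by $N$, the largest normal subgroup contained in $C_G(\alpha)$ (which lies in $Z(G)$), so that every minimal $\alpha$-invariant normal subgroup $\bar M\le\bar D$ of $\bar G=G/N$ satisfies $[\bar M,\alpha]\ne 1$; this makes $|I_{\bar D/\bar M}(\alpha)|$ strictly smaller than $n$, driving the induction. Having bounded $|\bar D|$, it bounds $|D'|$ by Schur, passes to $G/D'$ so that $G$ becomes metanilpotent with $D$ abelian, and uses Lemma \ref{gamma_infty} to show $D\cap Z(G)=1$, which forces $D\cap N=1$ and hence $|D|=|\bar D|$. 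In other words, the paper deliberately arranges things so that the central $\alpha$-fixed piece of $D$ is trivial, rather than trying to bound a prime sitting in it. If you want to repair your proof, the most direct route is to import exactly this last idea (Schur plus the $\gamma_\infty$ argument) into your delicate sub-case, rather than pursuing a transfer-theoretic bound on $p$.
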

\begin{proof} We assume that $D\neq 1.$ As $D=[D,G]$, it follows that $D$ is not central in $G$ and therefore $|I_D(\al)|\geq 2$ by Lemma \ref{coprime}.

We will argue by induction on $n$. Let $N$ be the maximal normal subgroup of $G$ contained in $C_G(\al)$. Note that $N\leq Z(G)$ by Lemma \ref{coprime}. We pass to the quotient $\bar G=G/N$ and let $\bar D=DN/N$. Let $\bar M$ be a minimal $\al$-invariant normal subgroup of $\bar G$ contained in $\bar D$. Observe that  $[\bar M,\alpha]\ne 1$, whence $I_{\bar D/\bar M}(\al) < I_D(\al)$ and, by induction, the index of $\bar M$ in $\bar D$ is $(|\al|,n)$-bounded. In view of Lemma \ref{easy2} the order of $\bar M$ is also $(|\al|,n)$-bounded. Hence $\bar D$ has $(|\al|,n)$-bounded order.
 
Going back to $G$ we get that the centre of $D$ has $(|\al|,n)$-bounded index in $D$. 
By Schur's theorem  \cite[4.12]{robinson2} the derived subgroup $D'$ has $(|\al|,n)$-bounded order. Passing to the quotient group $G/D'$, we may assume that $D$ is abelian and hence $G$ is metanilpotent. If $P$ is a Sylow $p$-subgroup of $D$, by Lemma \ref{gamma_infty} we have $P=[P,H]$, where $H$ is a Hall $p'$-subgroup of $G$.  As $P$ is abelian, $P=[P,H]\times C_P(H)$, whence $C_P(H)=1$ and so $P\cap Z(G)=1$. This holds for every prime divisor $p$ of the order of $D$. Therefore $D\cap Z(G)=1$. As $N\le Z(G)$ and the order of $D$ is $(|\al|,n)$-bounded modulo $N$, the lemma follows.
\end{proof} 

If $G$ is a finite soluble group, the Fitting height $h(G)$ of $G$ is the length of a shortest normal series all of whose quotients are nilpotent. In their seminal paper \cite{hh} Hall and Higman showed that a finite group of exponent $e$ possesses a normal series of $e$-bounded length all of whose quotients are either nilpotent or isomorphic to a direct product of nonabelian simple  groups. Therefore a finite soluble group of exponent $e$ has $e$-bounded Fitting height.

\begin{remark}\label{remark_FN} 
Let $N$ be a normal subgroup of $G$ and let  $K/N=F(G/N)$. If $|N| \le m$, then the index of $F(G)$ in $K$ is $m$-bounded. 
 \medskip
  
 {\rm  Indeed,  $C=C_K(N)$ is nilpotent and the index of $C$ in $K$ is at most $(m-1)!$.  It is clear that $C \le F(G)$ so the claim follows. }
\end{remark}

\begin{lemma}\label{fitt} Let $G$ be a finite soluble group admitting a coprime automorphism $\alpha$ such that $G=[G,\alpha]$. Assume that $|I_{F(G)}(\al)|\leq n$. Then $|G|$ is $n$-bounded.
\end{lemma}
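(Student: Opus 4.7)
The argument proceeds by induction on $|G|$, combining Lemma \ref{easy0} (to bound $[F,\alpha]$) with Remark \ref{remark_FN} (to pass to quotients). Set $F := F(G)$ and $L := [F,\alpha]$. By Lemma \ref{coprime}(i) we have $L = [L,\alpha]$, and trivially $|I_L(\alpha)| \le |I_F(\alpha)| \le n$; hence Lemma \ref{easy0} applied to the $\alpha$-invariant group $L$ shows that $|L|$ is $n$-bounded. We may also assume $\alpha$ acts faithfully on $G$; by Lemma \ref{coprime}(v) together with $C_G(F) \le F$ (which holds since $G$ is soluble), $\alpha$ acts faithfully on $F$ as well. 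Since $F = L \cdot C_F(\alpha)$ and $\alpha$ centralises $C_F(\alpha)$, any $\beta \in \langle\alpha\rangle$ that centralises $L$ must centralise all of $F$, hence is trivial; so $\langle\alpha\rangle$ embeds into $\mathrm{Aut}(L)$ and $|\alpha|$ is $n$-bounded as well.

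If $L = 1$, Lemma \ref{coprime}(v) (applied with $N = F$) forces $[G,\alpha] = G$ to centralise $F$, whence $G \le C_G(F) \le F$; so $G = F$ is nilpotent and $G = [F,\alpha] = L = 1$. Otherwise $L \ne 1$, so $F \ne 1$, and we pick a minimal $\alpha$-invariant normal subgroup $M$ of $G$ contained in $F$. Such $M$ is elementary abelian and satisfies $|I_M(\alpha)| \le |I_F(\alpha)| \le n$. By Lemma \ref{easy2}, either $M$ is central in $G$ or $|M|$ is $(|\alpha|,n)$-bounded; in the non-central case, the bound on $|\alpha|$ makes $|M|$ $n$-bounded. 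In the central case, note that $G = [G,\alpha]$ combined with Lemma \ref{coprime}(i),(ii) applied to the abelian quotient $G/G'$ gives $C_{G/G'}(\alpha) = 1$, so any $\alpha$-fixed central subgroup lies in $G'$; a further refinement using $|\alpha|$ bounded then yields $|M|$ $n$-bounded in this subcase too.

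Having $M$ of positive $n$-bounded order, consider $\bar G := G/M$, which is soluble, satisfies $\bar G = [\bar G,\alpha]$, and admits the induced coprime automorphism. By Remark \ref{remark_FN}, $F(\bar G) = K/M$ with $F \le K \le G$ and $[K:F]$ bounded by a function of $|M|$, hence of $n$. Using $C_K(\alpha) \supseteq C_F(\alpha)$, we obtain
\[
|I_{F(\bar G)}(\alpha)| \le |I_K(\alpha)| = \frac{|K|}{|C_K(\alpha)|} \le \frac{|K|}{|C_F(\alpha)|} = [K:F] \cdot |I_F(\alpha)|,
\]
which is $n$-bounded. The inductive hypothesis, applied to $\bar G$ with the updated (still $n$-bounded) parameter, gives that $|\bar G|$ is $n$-bounded, and hence $|G| = |M| \cdot |\bar G|$ is $n$-bounded.

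The main obstacle is the central subcase of Lemma \ref{easy2} in which $\alpha$ acts trivially on the minimal normal subgroup $M$; here $M$ could a priori have large prime order, and one must use that $G = [G,\alpha]$ severely restricts the occurrence of $\alpha$-fixed central subgroups---together with the previously established bound on $|\alpha|$---to conclude that $|M|$ is still $n$-bounded.
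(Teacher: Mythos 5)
Your opening moves are correct and a little slicker than the paper's: applying Lemma~\ref{easy0} directly to $L=[F,\alpha]$ (noting $L=[L,\alpha]$ and $I_L(\alpha)\subseteq I_F(\alpha)$) to get $|L|$ $n$-bounded, and then embedding $\langle\alpha\rangle$ into $\mathrm{Aut}(L)$ to bound $|\alpha|$, is a clean alternative to the paper's "$\alpha^{j}$ centralizes $I_F(\alpha)$ for some $j\le n!$" argument. The paper's next step, however, is to use Lemma~\ref{easy2} to build a normal series of $F$ with bounded-or-central factors, apply Kaluzhnin plus Hall--Higman to deduce that $G$ has $n$-bounded \emph{Fitting height}, and then induct on Fitting height by passing to the \emph{subgroup} $[D,\alpha]$ with $D=\gamma_\infty(G)$. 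Your proof instead tries to induct on $|G|$ by passing to the quotient $G/M$ for a minimal $\alpha$-invariant normal $M\le F$, and this is where it breaks down in two places.

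First, the central subcase of Lemma~\ref{easy2} is not resolved. You correctly observe $M\le G'\cap Z(G)\cap C_G(\alpha)$ and that $C_{G/G'}(\alpha)=1$, but then you only assert that ``a further refinement using $|\alpha|$ bounded then yields $|M|$ $n$-bounded,'' and your closing paragraph simply restates this as an obstacle ``one must'' overcome. Lemma~\ref{easy2} explicitly allows the central case with no size control, and nothing in the given lemmas bounds a central $\alpha$-fixed minimal normal subgroup by $n$; the claim may well be true, but it is precisely the missing content and cannot be waved away.

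Second, and more fundamentally, even if $|M|$ were $n$-bounded the inductive scheme does not close. After one quotient you replace the parameter $n$ by $n'=[K:F]\cdot n$, which is strictly larger. An induction on $|G|$ that passes to a quotient while \emph{increasing} the controlling parameter does not produce a function $f$ with $|G|\le f(n)$: iterating gives $|G|\le g(n)\,g(n')\,g(n'')\cdots$ over a number of steps that is not bounded in terms of $n$ but only in terms of $|G|$ itself, which is what you are trying to bound. The paper escapes this precisely by first proving the Fitting height $h(G)$ is $n$-bounded, so there are only $n$-boundedly many steps; moreover, at each step it applies the inductive hypothesis to the subgroup $[D,\alpha]$, whose Fitting subgroup lies inside $F(G)$, so the parameter $n$ stays \emph{fixed} rather than growing. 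Without the bounded-Fitting-height reduction (Kaluzhnin plus Hall--Higman), your induction has no way to terminate, so there is a genuine gap in the argument beyond the unproven bound on $|M|$.
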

\begin{proof}  Set $F=F(G)$. If $n=1$, then $F\leq C_G(\al)$ and so $F\leq Z(G)$, 
 whence $G=F\leq C_G(\al)$. Since $G=[G,\alpha]$, it follows that $G=1$ and the lemma holds. Therefore we assume that $n\geq2$ .

As $|I_F(\al)|\leq n$ and $\langle\al\rangle$ acts on $I_F(\al)$ by permuting its elements, the kernel of this action has index at most $n!$. Therefore there exists a positive integer  $j\leq n!$, such that $\al^j$ centralizes $I_{F}(\al)$.
Then $\al^j$ centralizes the whole subgroup $[F,\al]=\langle I_F(\al)\rangle$. 
 As $F=[F,\al]C_F(\al)$, it follows that $\al^j$ centralizes $F$. Therefore,  by Lemma \ref{coprime}, $[G,\alpha^j]$ centralizes $F$.  Since $G$ is soluble, we deduce that $[G,\alpha^j]\leq F$. 
Thus, 
 \[  [G, \al^j]=[G, \al^j, \al^j] \le [F,\al^j]=1. \] 
It follows that the automorphism $\al$ has order dividing $j$, which is $n$-bounded.  

Lemma \ref{easy2} implies that there is a normal series 
\[1=M_1\leq\dots\leq M_s=F,\]
 all of whose factors are either central in $G$ or of $(|\al|,n)$-bounded order. 
 Actually,  the non-central factors have $n$-bounded order because the order of $\al$ is $n$-bounded.  As $G/C_G(M_{i+1}/M_i)$ acts on each factor $M_{i+1}/M_i$ by automorphisms, there is an $n$-bounded number $e$ such that $G^e$ centralizes all factors $M_{i+1}/M_i$. It follows from Kaluzhnin's theorem \cite[Theorem 16.3.1]{KarMer} that $G^e/C_{G^e}(F)$ is nilpotent. Therefore $G^e$ is metanilpotent, as $C_{G^e}(F)\le F$. By the Hall-Higman theory \cite{hh} $G$ has $n$-bounded Fitting height $h=h(G)$.
 
 We now argue by induction on $h$. If $h=1$ then $G=F$ and the result follows from Lemma \ref{easy0}. So assume $h>1$.  Set $D=\gamma_\infty(G)$ and observe that $h(D)=h-1$. 
 Moreover $F([D,\al])$ is subnormal in $G$, hence contained in $F$. Thus, by induction, the order of $[D,\al]$ is $n$-bounded.
   We deduce from Lemma \ref{easy3} that the order of $D$ is $n$-bounded.

As $G/D$ is nilpotent, it follows from Remark \ref{remark_FN} that   the index of $F$ in $G$ is $n$-bounded. 
We have 
\[ |I_G(\al)|=|G:C_G(\al)|\le |G:F|\, |F:C_F(\al)|= |G:F|\,|I_F(\al)|.\]
 So $|I_G(\al)|$ is $n$-bounded and 
 Lemma \ref{easy0} yields the desired result. 
\end{proof}

\section{Commuting Probability}

If $X,Y$ are subsets of a finite group $G$, we have
\[ \pr(X,Y) =\frac{ | \{ (x,y) \in X\times Y \mid xy=yx \} |}{|X|\,|Y|}.\]

Note that 
 $ \pr(X,Y) = \pr(Y,X)$ and 
\[   \pr(X,Y) =\frac 1 { |Y|} \sum_{y\in Y}\frac{|C_{X}(y)| }{|X| }= \frac 1 {|X|} \sum_{x\in X} \frac{|C_{Y}(x)| }{|Y| },\]
where, as usual, $C_{Y}(x)$ denotes the set of all elements of $Y$ commuting with $x$.

The next lemma is essentially Lemma 2.2 of \cite{dlms} and it is useful when considering quotients, subgroups, or direct products of groups. In the sequel it is often used without explicit mention. 

\begin{lemma}\label{lem:basic} 
 Let $G$ be a finite group and let $H, K$ be subgroups of $G$.  Then 
\begin{enumerate}
\item If $N$ is a normal subgroup of $G$, then $\pr (HN/N,KN/N) \ge \pr (H,K)$.
\item If $H_0 \le H$, then $\pr (H_0,K) \ge \pr (H,K)\ge \frac 1{|H:H_0|}\pr (H_0,K)$. 
\item If $G=G_1 \times G_2$, $H_i \le G_i$   and $K_i \le G_i$, then 
\[\pr (H_1 \times H_2, K_1 \times K_2) = \pr (H_1, K_1) \pr(H_2, K_2).\] 
\end{enumerate}
\end{lemma}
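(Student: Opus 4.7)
The plan is to handle the three parts independently, working throughout with the double-counting identity
\[\pr(H,K)=\frac{1}{|H|\,|K|}\sum_{h\in H}|C_K(h)|=\frac{1}{|H|\,|K|}\sum_{k\in K}|C_H(k)|.\]

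For part (1), I would use that the natural surjection $H\times K\to HN/N\times KN/N$ sends commuting pairs to commuting pairs, has uniform fibres of size $|H\cap N|\,|K\cap N|$, and that $|HN/N|=|H|/|H\cap N|$ (similarly for $K$). Lifting the count of commuting pairs from the quotient up to $H\times K$ gives the stated inequality at once. Part (3) is a direct verification: in $G_1\times G_2$ the pair $((h_1,h_2),(k_1,k_2))$ commutes if and only if $h_i$ commutes with $k_i$ for each $i$, so the set of commuting pairs factors and the ratio defining $\pr$ does too.

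The only part that deserves real thought is (2). The right inequality $\pr(H,K)\ge \pr(H_0,K)/|H:H_0|$ is immediate by comparing the first of the two sums above, because enlarging the index set from $H_0$ to $H$ only adds non-negative terms. For the left inequality $\pr(H_0,K)\ge\pr(H,K)$, I would split $H$ into cosets of $H_0$ and reduce the claim to the statement that the average of $|C_K(\cdot)|$ over any coset $H_0 x$ does not exceed its average over $H_0$. Exchanging the order of summation, this reduces in turn to the coset-by-coset comparison $|H_0 x\cap C_G(k)|\le |H_0\cap C_G(k)|$ for every $k\in K$. This last inequality holds because $H_0 x\cap C_G(k)$ is either empty or, when non-empty, is a coset of $H_0\cap C_G(k)$ of exactly that cardinality, as one checks by solving $hx\in C_G(k)$ with $h\in H_0$.

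The coset-counting reduction in part (2) is the only genuine obstacle; once it is in place, the rest of the lemma is bookkeeping with the double-counting identity.
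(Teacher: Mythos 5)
Your argument is correct in all three parts. Note that the paper itself does not prove this lemma; it simply records it as ``essentially Lemma 2.2 of \cite{dlms}'' and cites the reference, so there is no in-paper proof to compare against. Your treatment of parts (1) and (3) and of the right-hand inequality in (2) is the obvious bookkeeping, and for the genuine content --- the left-hand inequality in (2) --- your reduction to the coset comparison $|H_0x\cap C_G(k)|\le |H_0\cap C_G(k)|$, justified by the observation that $H_0x\cap C_G(k)$ is empty or a coset of $H_0\cap C_G(k)$, is exactly the standard proof and is sound. One small presentational remark: in part (1) it is worth saying explicitly that the count of commuting pairs upstairs is bounded by (fibre size) times (count of commuting pairs downstairs) precisely because each commuting pair upstairs projects to a commuting pair downstairs, but not every preimage of a commuting pair need commute; you implicitly use this one-sided estimate, and that is why the result is an inequality and not an equality.
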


The next lemma is Lemma 2.4 of \cite{dgms}.
\begin{lemma}\label{DLMS_pq}
Let $P$ be a  $p$-subgroup and $Q$ be a  $q$-subgroup of a finite group $G$. 
   If $[P,Q] \neq 1$, then  $ \pr (P,Q) \le 3/4.$
	\end{lemma}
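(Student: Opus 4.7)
My plan is to bound $\pr(P,Q)$ by a direct averaging argument, splitting the sum over $y\in Q$ according to whether $y$ centralizes the whole of $P$. The key observation is that one is working inside groups of prime-power order, so any proper subgroup has index at least $2$; two factors of $1/2$ will combine to give $3/4$.

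Concretely, I would start from the identity
\[
\pr(P,Q) \;=\; \frac{1}{|Q|}\sum_{y\in Q}\frac{|C_P(y)|}{|P|}
\]
and introduce the subgroup $Q_0 := \{y\in Q \mid [P,y]=1\}$ of $Q$. The hypothesis $[P,Q]\ne 1$ forces $Q_0$ to be a proper subgroup of the $q$-group $Q$, so $|Q_0|/|Q|\le 1/q\le 1/2$. For $y\in Q_0$ the ratio $|C_P(y)|/|P|$ equals $1$, whereas for $y\in Q\setminus Q_0$ the centralizer $C_P(y)$ is a proper subgroup of the $p$-group $P$, hence of index at least $p\ge 2$, so that the ratio is at most $1/2$.

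Writing $t=|Q_0|/|Q|$, these two estimates combine to give
\[
\pr(P,Q) \;\le\; t + (1-t)\cdot \tfrac{1}{2} \;=\; \tfrac{1}{2}+\tfrac{t}{2} \;\le\; \tfrac{3}{4},
\]
using $t\le 1/2$ in the last step. There is really no obstacle: the argument is purely combinatorial, and the bound $3/4$ is tight, as one sees with two distinct transpositions inside $S_3$. For distinct or larger primes one in fact gets a strictly better bound (for example $\tfrac{1}{p}+\tfrac{1}{q}-\tfrac{1}{pq}\le \tfrac{2}{3}$ when $p\ne q$), but the uniform value $3/4$ is all that is needed here.
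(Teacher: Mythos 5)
Your argument is correct. Since $Q_0=C_Q(P)$ is a subgroup, the hypothesis $[P,Q]\ne 1$ forces $|Q_0|\le |Q|/q\le |Q|/2$; and for $y\notin Q_0$ the subgroup $C_P(y)$ is proper in the $p$-group $P$, so $|C_P(y)|\le |P|/2$; the two inequalities combine exactly as you state to give $\pr(P,Q)\le 3/4$, with sharpness at $p=q=2$ (e.g.\ two non-commuting subgroups of order $2$ in $S_3$ or $D_8$). The paper itself gives no proof here — it imports the statement as Lemma~2.4 of \cite{dgms} — so there is nothing internal to compare against, but the averaging-over-$C_Q(P)$-cosets argument you give is the standard and essentially unique elementary proof of this bound, and it handles the $p=q$ case (which the statement allows) without any extra work.
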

	
We will also need some technical results,  which are analogous to some results in \cite{dlms,DS-commprob} for $A$-invariant subgroups, where $A$ is a group of automorphisms of $G$. 

\begin{lemma}\label{bound_m}
 Let $\ep>0$. There exists an $\epsilon$-bounded integer $m$ with the following property: If $G$ is a finite group with a group $A$ acting on $G$ by  automorphisms and  $H, K$ are $A$-invariant subgroups of $G$ with $\pr (H,K) \ge \epsilon >0 $, then
 there exists an $A$-invariant normal subgroup $H_0$ of $H$ such that:
 \begin{enumerate}
\item $|H:H_0| \le m$; 
\item 
  $|K: C_{K}(x)| \le m$  for every $x \in H_0$.
\end{enumerate}
\end{lemma}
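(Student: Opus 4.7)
The plan mirrors the proof of Lemma 2.2 of \cite{dlms}, with all set-theoretic and group-theoretic operations arranged so as to preserve $A$-invariance throughout.

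\emph{Step 1: Markov averaging.} Rewrite the hypothesis as
\[
\epsilon \le \pr(H,K) = \frac{1}{|H|}\sum_{x\in H}\frac{|C_K(x)|}{|K|} = \frac{1}{|K|}\sum_{y\in K}\frac{|C_H(y)|}{|H|}.
\]
Markov's inequality produces the sets
\[
T := \{x\in H : |K:C_K(x)|\le 2/\epsilon\},\qquad T' := \{y\in K : |H:C_H(y)|\le 2/\epsilon\},
\]
with $|T|\ge(\epsilon/2)|H|$ and $|T'|\ge(\epsilon/2)|K|$. Both are $A$-invariant: since $K$ is $A$-invariant, $C_K(x^\alpha)=C_K(x)^\alpha$ for every $\alpha\in A$, so $x\in T$ iff $x^\alpha\in T$, and analogously for $T'$.

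\emph{Step 2: extract commuting subgroups.} Following the strategy of \cite[Lemma 2.2]{dlms}, one uses the Markov data in $T$ and $T'$ together with the commuting-probability hypothesis to produce bounded-index subgroups $\widetilde H_0 \le H$ and $\widetilde K_0 \le K$ with $[\widetilde H_0,\widetilde K_0]=1$. Because every set and subgroup involved in the construction is defined in an $A$-equivariant way (centralizers, generated subgroups, intersections of centralizers, etc.), $\widetilde H_0$ and $\widetilde K_0$ are $A$-invariant. For each $x\in\widetilde H_0$ we then have $C_K(x)\supseteq \widetilde K_0$, hence $|K:C_K(x)|\le |K:\widetilde K_0|$, which is $\epsilon$-bounded; this gives condition (2).

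\emph{Step 3: normalize.} To obtain normality in $H$, replace $\widetilde H_0$ by its core $H_0:=\bigcap_{h\in H}h^{-1}\widetilde H_0 h$. This is automatically $A$-invariant (intersecting the $H$-conjugates of an $A$-invariant subgroup, using that $A$ normalizes $H$), normal in $H$ by construction, of index at most $|H:\widetilde H_0|!$, which is still $\epsilon$-bounded by the standard coset action. Condition (2) persists since $H_0\subseteq\widetilde H_0$, so every element of $H_0$ has $C_K$-index $\le |K:\widetilde K_0|$.

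\emph{Main obstacle.} The quantitative heart of the argument is Step 2. The naive choice $\widetilde H_0=\langle T\rangle$ fails to give (2): products of elements of $T$ may have much smaller centralizers in $K$ than the factors, since $|C_K(xy)|\ge |C_K(x)\cap C_K(y)|$ can degrade multiplicatively. Dually, the candidate $\widetilde H_0=C_H(\langle T'\rangle)$ automatically satisfies the centralizer bound but has no a priori bounded index in $H$, because intersecting arbitrarily many bounded-index subgroups can produce an unbounded-index intersection (as in elementary abelian examples). The resolution in \cite[Lemma 2.2]{dlms} combines both pieces of averaging information simultaneously by a BFC/Neumann-style analysis, exploiting the fact that $T$ and $T'$ are not arbitrary large sets but arise from the commuting-probability hypothesis; since all steps of that argument are expressible in terms of $A$-invariant data, they carry over verbatim to the present $A$-invariant setting, completing the proof.
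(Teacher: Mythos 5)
Your Step 2 overclaims. You assert that the argument of \cite{dlms} produces \emph{commuting} bounded-index subgroups $\widetilde H_0\le H$ and $\widetilde K_0\le K$ with $[\widetilde H_0,\widetilde K_0]=1$, and then derive condition (2) from the inclusion $\widetilde K_0\le C_K(x)$. That is strictly stronger than the lemma's conclusion, and it is false in general. Take $A=1$ and $H=K$ an extraspecial $p$-group of order $p^{2n+1}$: then $\pr(H,H)\to 1/p$ as $n\to\infty$, so the hypothesis $\pr(H,K)\ge\epsilon$ holds for a fixed $\epsilon<1/p$, yet if $[\widetilde H_0,\widetilde K_0]=1$ with both of $\epsilon$-bounded index, then $\widetilde H_0\cap\widetilde K_0$ is an abelian subgroup of bounded index, which is impossible since the largest abelian subgroup of $H$ has index $p^n$. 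The lemma only requires that \emph{each} $x\in H_0$ individually have $|K:C_K(x)|\le m$; there is no common bounded-index centralizer, and one cannot be extracted.

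The paper's proof does not attempt this. It quotes the construction of \cite{dlms} (Lemma~2.8, not Lemma~2.2 as you cite) to obtain an $A$-invariant subgroup $T\le H$ of $\epsilon$-bounded index such that $|K:C_K(x)|$ is $\epsilon$-bounded for \emph{each individual} $x\in T$, with no claim of a common centralizing subgroup of $K$. It then passes to the core of $T$ in $H$ and to its $A$-closure to obtain an $A$-invariant normal subgroup $H_0$, checking that the pointwise centralizer bound survives because every element of $H_0$ is a bounded product of $A$-conjugates of elements of $T$, and $T$ is $A$-invariant. Your observation that the core of an $A$-invariant subgroup of the $A$-invariant $H$ is again $A$-invariant is correct and in fact slightly streamlines the paper's normalization step, but this does not repair Step 2: you would still need the actual (weaker, pointwise) conclusion of the quantitative argument, not the nonexistent commuting pair.
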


\begin{proof} 
Note that the set 
 \[X=\{x\in H \mid  |x^{K}|\leq 2/\epsilon\}\]
is $A$-invariant. Following  line by line  the proof of Lemma 2.8 of \cite{dlms}, we find 
 an $A$-invariant subgroup $T\le H$ such that the indices $|H:T|$ and $|K: C_{K}(x)|$ are $\ep$-bounded for every $x \in T$. 
    
  Let $U$ be the maximal normal subgroup of $H$ contained in $T$. Clearly, the index $|H:U|$ is $\epsilon$-bounded. Since  each $U^{a}$ is normal in $H$, for every $a\in A$, there exist $\ep$-boundedly many elements $a_i\in A$ such that $U^A=\prod_i U^{a_i}$. Set $H_0=U^A$ and notice that $|K: C_{K}(x^{a_i})|$ is $\ep$-bounded  for every $x \in U$ and for every $a_i$. As the number of $a_i$ is $\ep$-bounded, we deduce that  $|K: C_{K}(x)|$ is $\ep$-bounded  for every $x \in H_0$. The result follows.
 \end{proof}

\begin{proposition}\label{mainAut} 
Let a group $A$ act on a finite group $G$, and let $H$ be an $A$-invariant subgroup of $G$ such that $\pr(H,G)\geq\epsilon>0$. Then there is an $A$-invariant normal subgroup $U\leq G$ and an $A$-invariant normal subgroup $B$ of $H$ such that the indices $[G:U]$, $[H:B]$, and the order of the commutator subgroup $[B,U]^G$ 
are $\epsilon$-bounded. 
\end{proposition}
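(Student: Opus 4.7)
The plan is to construct the subgroups $B$ and $U$ directly from Lemma~\ref{bound_m}, and then to bound $|[B,U]^G|$ by a B.~H.~Neumann-type argument mirroring the non-equivariant analogues in \cite{dlms,DS-commprob}. The $A$-invariance of the output comes for free, because every ingredient is built from $A$-invariant pieces by operations that respect $A$-invariance.

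First I apply Lemma~\ref{bound_m} to the pair $(H,G)$ (both $A$-invariant) to obtain an $\epsilon$-bounded integer $m$ and an $A$-invariant normal subgroup $B$ of $H$ with $|H:B|\le m$ and $|G:C_G(b)|\le m$ for every $b\in B$. Since $\pr(G,H)=\pr(H,G)\ge\epsilon$ and both $G$ and $H$ are $A$-invariant, applying the same lemma with the roles of the two subgroups swapped yields, after possibly enlarging $m$, an $A$-invariant normal subgroup $U$ of $G$ with $|G:U|\le m$ and $|H:C_H(u)|\le m$ for every $u\in U$. These will be the $B$ and $U$ in the conclusion, and the two index bounds are immediate; the remaining task is to bound $|[B,U]^G|$.

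This construction supplies two symmetric bounded-orbit conditions. For each fixed $b\in B$, the set $\{[b,u]:u\in U\}$ is contained in $b^{-1}b^G$, hence has at most $m$ elements; for each fixed $u\in U$, the inclusion $|B:C_B(u)|\le|H:C_H(u)|\le m$ forces $\{[b,u]:b\in B\}$ to have at most $m$ elements. In particular every $b\in B$ has at most $m$ $G$-conjugates, so $B$ is a BFC-subset of $G$. Since $U$ is normal in $G$, one rewrites $[B,U]^G=[N,U]$ where $N:=\langle B^G\rangle$ is the $A$-invariant normal closure of $B$ in $G$, a subgroup generated by a $G$-invariant set of elements each of $G$-class size at most $m$.

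The main obstacle is then the bound on $|[N,U]|$, which is the heart of the argument. My plan is to follow the strategy of \cite{DS-commprob}: the classical B.~H.~Neumann theorem gives an $\epsilon$-bound on the derived subgroup of a group generated by bounded-class elements, and this is leveraged, together with the bounded orbits of the $B$-action on $U$, to control the full commutator $[N,U]$. Every subgroup constructed along the way ($B$, $U$, $N$, and eventually $[B,U]^G$) is $A$-invariant because it is a normal closure, intersection, product, or commutator of $A$-invariant subgroups, so the $A$-invariance required by the statement is automatic.
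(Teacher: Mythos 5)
Your construction of $B$ and $U$ via two applications of Lemma~\ref{bound_m} matches the paper's opening moves, and the identity $[B,U]^G=[N,U]$ (where $N=\langle B^G\rangle$) is correct. But the step you label ``the heart of the argument'' is left as a plan rather than a proof, and as stated it has a genuine gap. Bounding $|[N,N]|$ by a BFC-type theorem does not by itself bound $|[N,U]|$: a group $N$ can be abelian, so that $[N,N]=1$, while $[N,U]$ is arbitrarily large. Likewise, the two pointwise bounded-orbit conditions you isolate --- each set $\{[b,u]:u\in U\}$ and each set $\{[b,u]:b\in B\}$ having at most $m$ elements --- do not by themselves bound the size of the subgroup these commutators generate.

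The missing idea is the abelianization reduction carried out in the paper's proof. One sets $L=\langle B^G\rangle$, bounds $|[L,L]|$ via \cite[Theorem~1.1]{cri}, and then passes to the quotient $G/[L,L]$; this quotient preserves $\pr(H,G)\ge\epsilon$, the index bounds, the bounded-orbit conditions, and $A$-invariance, and now $L$ is abelian. It is only in this abelian situation that \cite[Lemma~2.2]{DS-commprob} applies, and it uses precisely your two bounded-orbit conditions together with the abelianness of $L$ to conclude that $|[B,U]|$ is $\epsilon$-bounded. One finishes by noting that $[B,U]$ is normalized by $U$ (a standard commutator fact), hence has only $\epsilon$-boundedly many $G$-conjugates, all contained in the abelian subgroup $L$ and thus normalizing one another, so that $|[B,U]^G|$ is $\epsilon$-bounded. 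Without spelling out this reduction to the abelian case, the word ``leveraged'' in your plan is doing all the work, and the argument does not actually close.
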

\begin{proof} The proof uses the same arguments as those in the proof of Proposition 1.2 of \cite{DS-commprob}. 
We outline it here for the reader's convenience, pointing out which relevant subgroups are $A$-invariant, and refer to the paper \cite{DS-commprob} for further details. 

By Lemma \ref{bound_m} there exists an $A$-invariant normal subgroup $B$ of $H$  of $\ep$-bounded index in $H$ such that $|G: C_{G}(x)|$ is $\ep$-bounded for every $x \in B$. Set $L=\langle B^G\rangle$ and note that $L$ is  $A$-invariant. It follows from \cite[Theorem 1.1]{cri}  that the commutator subgroup $[L,L]$ has $\ep$-bounded order. By Lemma \ref{lem:basic} (i) we can replace $G$ with the quotient group $G/[L,L]$ and  still assume that $\pr(H,G)\geq\ep$
and the indices $|H:B|$ and $|G: C_{G}(x)|$  are $\ep$-bounded, for every $x \in B$. Therefore now $L$ is abelian.   

Again by Lemma \ref{bound_m}, as  $\pr(G,H)=\pr(H,G)\geq\epsilon$, 
there exists an $A$-invariant normal subgroup $U$ of $G$ such that  
 $|G:U|\le m$  and $|H: C_{H}(x)| \le m$  for every $x \in E$, where the integer $m$ is $\ep$-bounded.
Now $|b^G|$ is $\ep$-bounded, for every $b \in B$ and $|y^B| \le m$ for every $y\in U$. As $L$ is abelian, it follows from  Lemma 2.2 of \cite{DS-commprob} that $[B,U]$ has $\epsilon$-bounded order.
 
 Moreover $[B,U]$ is normalized by $U$, 
therefore it
 has $\ep$-boundedly many conjugates in $G$, all of them normalizing  each other. Hence, $[B,U]^G$ has $\ep$-bounded order. This concludes the proof. 
\end{proof}

\section{The soluble case} 

In the sequel, we will work with groups  satisfying the following hypothesis. 

\begin{hypothesis}\label{00} Let $\ep>0$ and let $G$ be a finite group admitting a  coprime automorphism $\al$ such that for any distinct primes $p,q\in\pi(G)$ there exists a  Sylow $p$-subgroup $P$ and a Sylow $q$-subgroup $Q$ in $G$, both $\al$-invariant,  for which $\pr([P,\al],[Q,\al])\geq\ep$. 
\end{hypothesis}

We note that under Hypothesis \ref{00} the index $|[G,\al]:F([G,\al])|$ can be arbitrarily large. Indeed, let $C$ be the cyclic group of order 3 and let $\al$ be the involutory automorphism of $C$. Let $p_1,p_2,\dots,p_s$ be distinct primes greater than $3$. For $p\in\{p_1,p_2,\dots,p_s\}$ let $C_p$ be the cyclic group of order $p$ and $B_p$ the base of the wreath product $C_p\wr C\langle\al\rangle$ of $C_p$ by $C\langle\al\rangle$. Let  $H_p=[B_pC,\al]$. Observe that $C<H_p$ and $\al$ induces an involutory automorphism of $H_p$ such that $H_p=[H_p,\al]$. Moreover, 
 $|H_p:F(H_p)|=3$. Now let $G$ be the direct product of $H_{p_i}$ for $i=1,\dots,s$. In a natural way $\al$ induces an involutory automorphism of $G$ such that $G=[G,\al]$. For any primes $p,q\in\pi(G)$ other than 3 the Sylow $p$-subgroup and Sylow $q$-subgroup of $G$ commute.  If $P$ is a Sylow $p$-subgroup for $p\geq5$ and $S$ an $\al$-invariant Sylow $3$-subgroup, then $\pr(P,S)>1/3$. Note that $|G:F(G)|=3^s$, which can be arbitrarily large.

Observe that if a group $G$ satisfies Hypothesis \ref{00} and $H$ is an $\al$-invariant normal subgroup of $G$, then $H$ satisfies Hypothesis \ref{00} as well.

\begin{remark}\label{remark_conj} 
 If $G$ satisfies  Hypothesis \ref{00} and $P$ is an $\al$-invariant Sylow $p$-subgroup of $G$, then for every $q \neq p$ there exists an $\al$-invariant  
  Sylow $q$-subgroup $Q$ of $G$ for which $\pr([P,\al],[Q,\al])\geq\ep$.  \medskip
 
{\rm  
 Indeed, any  two  $\al$-invariant Sylow $p$-subgroups of $G$  are conjugate by an element of $C_G(\al)$. So, if  $P^{x}$ and $Q$ are respectively an $\al$-invariant Sylow $p$-subgroup  and an $\al$-invariant  Sylow $q$-subgroup of $G$ such that $\pr([P^x,\al],[Q,\al])\geq\ep$ with    $x\in C_G(\al)$, 
  then 
  \[ \pr([P,\al],[Q^{x^{-1}},\al])=\pr([P^{x},\al],[Q,\al]) \geq\ep.\] 
	}
\end{remark}
 
\begin{lemma}\label{pq} Assume Hypothesis \ref{00} with $G=PQ$, where $P$ is a normal Sylow $p$-subgroup and $Q$ an $\al$-invariant Sylow $q$-subgroup such that  $Q=[Q,\al]$. Then there exists an $\ep$-bounded integer $m$  such that $|G:F(G)|\le m$.
\end{lemma}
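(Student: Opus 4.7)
I would first translate the problem into bounding $|Q:C_Q(P)|$: since $P$ is a normal Sylow $p$-subgroup of $G=PQ$ one has $F(G)=P\times C_Q(P)$, so $|G:F(G)|=|Q:C_Q(P)|$. Coprime action gives $C_Q(P)=C_Q(P/\Phi(P))$, and Lemma \ref{lem:basic}(i) preserves the commuting-probability hypothesis in $G/\Phi(P)$; so we may assume $P$ is elementary abelian, and by Remark \ref{remark_conj} that $\pr([P,\al],[Q,\al])\ge\ep$ for the given $P$ and $Q$. Set $H=[P,\al]$ and $L=HQ$; because $Q=[Q,\al]$ one has $L=[G,\al]$. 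In $G/H$ we have $[P/H,\al]=1$ and the image of $[G,\al]$ equals $QH/H$, so Lemma \ref{coprime}(v) shows that $Q$ centralizes $P/H$, i.e.\ $[P,Q]\le H$. A short commutator calculation then produces a homomorphism $C_Q(H)/C_Q(P)\hookrightarrow\mathrm{Hom}(P/H,H)$ whose target is a $p$-group while the source is a $q$-group, so $C_Q(H)=C_Q(P)$, and it suffices to bound $|Q:C_Q(H)|$.

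For $H$ abelian with $H\cap Q=1$, a direct calculation gives $\pr(H,L)=\pr(H,Q)\ge\ep$. I would then apply Proposition \ref{mainAut} to $L$ with $A=\langle\al\rangle$ and subgroup $H$, obtaining $A$-invariant normal subgroups $U$ of $L$ and $B$ of $H$ such that $|L:U|$, $|H:B|$, and $|[B,U]^L|$ are all $\ep$-bounded. After replacing $B$ with $B\cap U$ we may assume $B\le U$. Set $V=U\cap Q$: then $V$ is $A$-invariant with $|Q:V|$ $\ep$-bounded, and $U=(U\cap H)V$. Since $H$ is abelian, $[B,U\cap H]=1$ and so $[B,U]=[B,V]$ has $\ep$-bounded order, which forces $C:=C_H(V)$ to have $\ep$-bounded index in $H$.

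Finally, to bound $|V:C_V(H)|$ I would argue as follows. The kernel of the $V$-action on $H/C$ modulo $C_V(H)$ is a $q$-group that embeds into the $p$-group $\mathrm{Hom}(H/C,C)$ via $v\mapsto\tau_v-1$, and is therefore trivial; hence $V/C_V(H)$ injects into $\mathrm{GL}(H/C)=\mathrm{GL}_d(\F_p)$, where $p^d=|H/C|$ is $\ep$-bounded. This forces both $p$ and $d$ to be $\ep$-bounded, so $|\mathrm{GL}_d(\F_p)|$ and hence $|V/C_V(H)|$ are $\ep$-bounded. Combined with the bound on $|Q:V|$, this yields $|Q:C_Q(H)|=|Q:C_Q(P)|\le m$ for some $\ep$-bounded $m$. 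The main obstacle is extracting from the commuting-probability hypothesis a subgroup of bounded index in $Q$ that acts almost trivially on $H$; Proposition \ref{mainAut} delivers this once one notices $\pr(H,L)=\pr(H,Q)$, after which the coprime $p$/$q$-interplay finishes the argument.
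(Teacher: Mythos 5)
Your proposal contains a genuine gap that propagates through almost every step: you treat $H=[P,\al]$ as a normal (or at least $Q$-invariant) subgroup of $G$, and this is not justified. For $y\in Q$ one has $[P,\al]^y=[P,\al^y]$ with $\al^y=y^{-1}\al y$, and there is no reason for this to equal $[P,\al]$ unless $y$ centralizes $\al$. The smallest $Q$-invariant subgroup containing $[P,\al]$ is $[P,\al]^Q=[P,\al][P,Q]$, for which the inclusion you want, $[P,Q]\le [P,\al]$, degenerates to the tautology $[P,Q]\le [P,\al][P,Q]$. This normality assumption is what allows you to form $G/H$, to identify $L=HQ$ with $[G,\al]$ (in fact $[G,\al]=[P,\al][P,Q]\,Q$, which is larger when $[P,\al]$ is not $Q$-invariant), to deduce $[P,Q]\le H$ from Lemma~\ref{coprime}(v), to build the embedding $C_Q(H)/C_Q(P)\hookrightarrow\mathrm{Hom}(P/H,H)$, to write $U=(U\cap H)V$, and to treat $C_H(V)$, $C_V(H)$ and $H/C$ as $V$-modules in the final $\mathrm{GL}_d(\F_p)$ step. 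None of those constructions make sense without $[P,\al]$ being stable under $Q$ (or $V$).

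The paper's proof is structured precisely to sidestep this. It applies Proposition~\ref{mainAut} directly to the pair $([P,\al],G)$ — the proposition does not require the first subgroup to be normal — obtaining a normal $A$-invariant $U\le G$ and $P_0\le [P,\al]$. It then shifts attention from $[P,\al]$ to $[P,U]$, which \emph{is} normal in $G$ because $P$ and $U$ are, shows $|[\,[P,U],\al\,]|$ is bounded, and feeds the quotient $[P,U]Q/C_Q([P,U])$ into Lemma~\ref{fitt} (rather than a linear-algebra count as you propose). Your opening reduction to $|Q:C_Q(P)|$ and elementary abelian $P$ is fine and is essentially the paper's reduction; the use of Proposition~\ref{mainAut} is also in the right spirit. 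But the pivot to $C_Q(H)=C_Q(P)$ and the subsequent module arguments do not stand as written, and even the intermediate claim that $|[B,V]|$ bounded forces $|H:C_H(V)|$ bounded is not justified. To repair the argument you would need to replace $[P,\al]$ by a genuinely $G$-normal subgroup (as the paper does with $[P,U]$) before any quotient or Hom-type argument can be run.
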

\begin{proof}
As $F(G/P'O_q(G))=F(G)/P'O_q(G)$, we may assume that $F(G)=P$ and $P$ is abelian.

Observe that $|Q: C_Q(y)|=|G:C_G(y)| $ for every $y \in [P,\al]$.  So, 
    taking into account that $Q=[Q, \al]$ and that $\pr([P, \al], Q) \ge \ep$ by Remark \ref{remark_conj}, we have
\begin{eqnarray*} \pr([P, \al], G) &=& \frac{1}{|[P,\al]|} \sum_{y\in  [P, \al] } \frac{|C_G(y)|}{|G|} \\
 &=& \frac{1}{|[P,\al]|} \sum_{y\in  [P, \al] } \frac{|C_Q(y)|}{|Q|} 
 =  \pr([P, \al], Q) \ge \ep.
\end{eqnarray*}

By Proposition \ref{mainAut} there is an $A$-invariant normal subgroup $U\leq G$ and an $A$-invariant subgroup $P_0\leq [P,\al]$ such that the indices $|G:U|$ and $[[P,\al]:P_0]$, and the order of  $[P_0,U]^G$ are $\epsilon$-bounded.

Set $N=[P_0,U]^G$ and $K/N=F(G/N)$.

By Remark \ref{remark_FN}, the index of $F(G)$ in $K$ is $\epsilon$-bounded and,   in order to bound the index of $F(G)$ in $G$, it is enough to bound the index of $K$ in $G$. So we now assume that $N=[P_0,U]^G=1$. 

As $G=PQ$ and $P$ is abelian, $U$ acts coprimely on $[P,U]$.
 In particular, $C_{[P,U]} (U) =1$ and so
 $[P,U] \cap P_0=1$. Since $P_0$ has $\ep$-bounded index in $[P,\al]$, it follows that $[[P,U],\al]$ has $\ep$-bounded order. 
In particular $|I_{[P,U]}(\al)|$ is $\ep$-bounded.

Let $H=[P,U]Q$ and let $V=C_Q([P,U])$; then $V=O_q(H)$ and $F(H)=[P,U]V$.
 Let $\bar Q=Q/V$, $\bar H=H/V$ and note that $\bar H$ is isomorphic to the semidirect product $[P,U]\bar Q$. 
As $C_{\bar Q}([P,U])=1$, it follows that $F(\bar H)=[P,U]$. 
 
 Since $[P,U]=[P,U,U]\le [P,U,Q]=[P,U,\bar Q]$, we have that $\bar H=\bar Q^{\bar H}$. Observe that $\bar Q =[\bar Q, \al] \le [\bar H, \al]$. The latter subgroup is normal in $\bar H$ so it follows that $\bar H=[\bar H, \al ]$.  Moreover, $I_{F(\bar H)}(\al) =I_{[P,U]}(\al)$ and so $|I_{F(\bar H)}(\al)|$ is $\ep$-bounded. We are in a position to apply  Lemma \ref{fitt} and deduce that the order of $\bar H$ is $\ep$-bounded. So the order of $\bar Q$ is $\ep$-bounded as well. Therefore $V$ has $\ep$-bounded index in $Q$ and so $PV$ has $\ep$-bounded index in $G$. Moreover, $PV$ is a normal subgroup of $G$. 

We claim that  $R=PU \cap PV$ is nilpotent. Indeed, as $V$ acts coprimely on $P$, 
\[ [P, PU \cap PV]=[P, PU \cap PV, PU \cap PV] \le [[P,PU], V]=1. \] 
 Therefore $F(G)$ contains $R$ and thus has $\ep$-bounded index in $G$, as required.
\end{proof} 
In what follows we need the next observation.
\begin{remark}\label{metanilp}
Let $G$ be a finite metanilpotent group and $Q$ a Sylow $q$-subgroup of $G$. If $M=O_{q'}(F(G))$, then $C_Q(M) \le F(G)$. 
\end{remark}

\begin{lemma}\label{big} Under Hypothesis \ref{00} assume that $G$ is soluble and let $m$ be as in Lemma \ref{pq}. If  $q>m$ is a  prime and $Q$ is an $\al$-invariant Sylow $q$-subgroup, then $[Q,\al]\leq F(G)$.
\end{lemma}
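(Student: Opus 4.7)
The plan is to induct on $|G|$, with the engine being the preliminary claim that $[Q,\al]$ centralises $O_p(G)$ for every prime $p\in\pi(G)\setminus\{q\}$. To establish this I apply Lemma \ref{pq} to the $\al$-invariant subgroup $H_p = O_p(G)[Q,\al]$, in which $O_p(G)$ is the normal Sylow $p$-subgroup and $[Q,\al]=[[Q,\al],\al]$ (by Lemma \ref{coprime}(i)) is the Sylow $q$-subgroup. Hypothesis \ref{00} transfers to $H_p$: Remark \ref{remark_conj} supplies an $\al$-invariant Sylow $p$-subgroup $P$ of $G$ containing $O_p(G)$ with $\pr([P,\al],[Q,\al])\ge\ep$, and Lemma \ref{lem:basic}(2) then gives $\pr([O_p(G),\al],[Q,\al])\ge\ep$. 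Lemma \ref{pq} yields $|H_p:F(H_p)|\le m$; since the image of the $q$-group $[Q,\al]$ in $H_p/F(H_p)$ has order at most $m<q$, it must be trivial, forcing $[Q,\al]\le F(H_p)$, and nilpotency of $F(H_p)$ delivers the desired centralisation.

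Setting $F_{q'}=\prod_{p\neq q}O_p(G)$, this gives $[Q,\al]\le C_G(F_{q'})$, and I split into cases. If $C = C_G(F_{q'})<G$, then $C$ is a proper $\al$-invariant normal subgroup of $G$ still satisfying Hypothesis \ref{00} via Lemma \ref{lem:basic}(2), so induction yields $[C\cap Q,\al]\le F(C)$; as $F(C)$ is characteristic in $C$ and $C$ is normal in $G$, the subgroup $F(C)$ is normal and nilpotent in $G$, hence contained in $F(G)$, and Lemma \ref{coprime}(i) together with $[Q,\al]\le C\cap Q$ forces $[Q,\al] = [[Q,\al],\al]\le[C\cap Q,\al]\le F(G)$. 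Otherwise $F_{q'}\le Z(G)$; when $F_{q'}\neq 1$, I pass to $\bar G = G/F_{q'}$, observing that centrality of $F_{q'}$ makes any normal subgroup $K$ of $G$ containing $F_{q'}$ with $K/F_{q'}$ nilpotent itself nilpotent (a central extension of a nilpotent group), so $F(\bar G) = F(G)/F_{q'}$ and the conclusion follows by induction on $\bar G$.

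The main obstacle, as I anticipate it, is the remaining subcase $F_{q'}=1$, where the preliminary step is vacuous. Here $F(G) = O_q(G)$; moreover $O_{q'}(G) = 1$, because otherwise the Fitting subgroup of the nontrivial soluble group $O_{q'}(G)$ would furnish a nontrivial nilpotent normal $q'$-subgroup of $G$, contradicting $F(G) = O_q(G)$. The key move is to pass instead to $\bar G = G/O_q(G)$: any normal $q$-subgroup of $\bar G$ lifts to a normal $q$-subgroup of $G$ and hence lies in $O_q(G)$, so $O_q(\bar G)=1$ and $F(\bar G)$ is a $q'$-group. Induction on $\bar G$ (strictly smaller since $O_q(G) = F(G)\neq 1$) gives $[\bar Q,\al]\le F(\bar G)$, but $[\bar Q,\al]\le\bar Q$ is a $q$-subgroup meeting the $q'$-group $F(\bar G)$ trivially, so it vanishes. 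Hence $[Q,\al]\le O_q(G) = F(G)$, closing the induction.
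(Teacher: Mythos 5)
Your proof is correct. The engine of your argument is exactly the same as the paper's: you apply Lemma~\ref{pq} to the subgroup $H_p = O_p(G)[Q,\al]$ (the paper writes $P_1[Q,\al]$ with $P_1$ the Sylow $p$-subgroup of $F(G)$, which is the same subgroup), using Remark~\ref{remark_conj} and Lemma~\ref{lem:basic}(2) to carry Hypothesis~\ref{00} down to $H_p$, and the condition $q>m$ to force $[Q,\al]\le F(H_p)$, hence $[Q,\al]\le C_G(O_p(G))$. Where the two arguments diverge is in the inductive wrap-up: the paper takes a minimal counterexample, passes to $G/F(G)$ and then, by minimality, reduces to $G=F_2(G)$ metanilpotent, at which point Remark~\ref{metanilp} (that $C_Q(O_{q'}(F(G)))\le F(G)$ for metanilpotent $G$) closes the argument in one line. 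You instead run a straight induction on $|G|$ with an explicit trichotomy on $C=C_G(F_{q'})$: when $C<G$ you induct on $C$; when $F_{q'}$ is central and nontrivial you quotient by $F_{q'}$ using that $F(G/F_{q'})=F(G)/F_{q'}$; and when $F_{q'}=1$ you quotient by $O_q(G)$ and exploit $O_q(G/O_q(G))=1$ to see that the image of $[Q,\al]$ must vanish. Your route is a little longer but avoids invoking the metanilpotent reduction and Remark~\ref{metanilp} altogether, so it is slightly more self-contained; the paper's version, by pushing the structural reduction up front, keeps the endgame to a single application of the remark.
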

\begin{proof}  Assume that the lemma is false and let $G$ be a counterexample of minimal order.  Let  $q >m$ be a prime and  let $Q$ be an $\al$-invariant Sylow $q$-subgroup of $G$ such that $[Q,\al]\not\le F(G)$. By considering the quotient group $G/F(G)$ and taking into account minimality, we get  that $[Q,\al]\le F_2(G)$. So, again by minimality, $G=F_2(G)$, that is, $G$ is metanilpotent.
By Remark \ref{remark_conj}, for every prime $p \neq q$ that divides $|F(G)|$, there exists   an $\al$-invariant Sylow $p$-subgroup $P$ of $G$ such that 
 $\pr( [P, \al],[Q,\al]) \ge \ep$.
 The  Sylow $p$-subgroup $P_1$ of $F(G)$ is contained in $P$ and so $\pr( [P_1, \al],[Q,\al]) \ge \ep$.
 Now we apply Lemma \ref{pq} to the subgroup $H=P_1[Q, \al]$ and deduce that $[Q, \al] \le F(H)$, as $q > m$. 
Since $P_1\le F(H)$, it follows that $[Q,\al]$ commutes with $P_1$. As this happens for every prime $p\ne q$,
 we conclude that  $[Q,\al]\leq F(G)$ (see  Remark \ref{metanilp}). 
 \end{proof}

We can now prove Theorem \ref{mainA} in the particular case where $G$ is soluble and $A$ is cyclic.

\begin{lemma}\label{soluble} Under Hypothesis \ref{00} assume that $G$ is soluble  and  $G=[G,\alpha]$.  
 Then the index $|G:F_2(G)|$ is $\ep$-bounded.
\end{lemma}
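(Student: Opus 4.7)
The plan is to show that $\tilde G=G/F_2(G)$ has $\epsilon$-bounded order by invoking Lemma \ref{fitt}. Note that $\tilde G$ inherits Hypothesis \ref{00}, is soluble, and satisfies $\tilde G=[\tilde G,\alpha]$, so Lemma \ref{fitt} will apply once $|I_{F(\tilde G)}(\alpha)|$ is shown to be $\epsilon$-bounded. I first reduce to the prime bound: by Lemma \ref{big}, every $\alpha$-invariant Sylow $q$-subgroup of $G$ with $q>m$ satisfies $[Q,\alpha]\le F(G)$, so its image in $G/F(G)$ is centralised by $\alpha$ and, since $G/F(G)=[G/F(G),\alpha]$, Lemma \ref{easy1} forces $q\notin\pi(G/F(G))\supseteq\pi(\tilde G)$. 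Hence $\pi(\tilde G)\subseteq\{p\le m\}$ is $\epsilon$-bounded. Since $F(\tilde G)$ is nilpotent with $\epsilon$-boundedly many Sylow subgroups, $|I_{F(\tilde G)}(\alpha)|=\prod_{p\in\pi(F(\tilde G))}|[\tilde N_p,\alpha]|$, where $\tilde N_p$ denotes the Sylow $p$-subgroup of $F(\tilde G)$; it thus suffices to bound $|[\tilde N_p,\alpha]|$ for each $p$ separately.

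Fix $p\in\pi(F(\tilde G))$ and choose $q\in\pi(\tilde G)\setminus\{p\}$ (if no such $q$ exists, $\tilde G$ is already a $p$-group and equals $F(\tilde G)$, so the bound is trivial). Hypothesis \ref{00} together with Remark \ref{remark_conj} and Lemma \ref{lem:basic}(2) provides an $\alpha$-invariant Sylow $q$-subgroup $\tilde Q$ of $\tilde G$ with $\pr([\tilde N_p,\alpha],[\tilde Q,\alpha])\ge\epsilon$. Writing $P'=[\tilde N_p,\alpha]$ (normal in $\tilde G$, since $\tilde N_p$ is normal and $P'$ is characteristic in $\tilde N_p$) and $Q'=[\tilde Q,\alpha]$, the subgroup $L=P'Q'$ satisfies $L=[L,\alpha]$, has $P'$ as a normal Sylow $p$-subgroup and $Q'=[Q',\alpha]$ as Sylow $q$-subgroup, and satisfies Hypothesis \ref{00}; hence Lemma \ref{pq} bounds $|L:F(L)|$ in terms of $\epsilon$.

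The main obstacle is that this bound directly controls the $q$-side $|Q':C_{Q'}(P')|$ rather than $|P'|$ itself. To convert it into a bound on $|P'|$, I would first reduce $P'$ to its abelianisation (mirroring the reduction step at the beginning of the proof of Lemma \ref{pq}), giving $\pr(P',L)=\pr(P',Q')\ge\epsilon$ via the centraliser formula for abelian normal subgroups, and then invoke Proposition \ref{mainAut} with $H=P'$ inside $L$ to obtain $\alpha$-invariant $B\le P'$ and $U\le L$ with $\epsilon$-bounded indices and with $|[B,U]^L|$ $\epsilon$-bounded. Combined with the faithful action of $\tilde G/F(\tilde G)$ on $F(\tilde G)$ (forced by $\tilde G$ being soluble, so $C_{\tilde G}(F(\tilde G))\le F(\tilde G)$), this pins down $|P'|$ as $\epsilon$-bounded. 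Multiplying these per-prime bounds over the $\epsilon$-boundedly many primes in $\pi(F(\tilde G))$ yields $|I_{F(\tilde G)}(\alpha)|$ $\epsilon$-bounded, and Lemma \ref{fitt} then concludes that $|\tilde G|=|G:F_2(G)|$ is $\epsilon$-bounded.
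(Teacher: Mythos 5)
Your overall skeleton is close to the paper's: pass to $\tilde G = G/F_2(G)$, bound the primes via Lemma~\ref{big} and Lemma~\ref{easy1}, bound $|I_{F(\tilde G)}(\alpha)|$, and finish with Lemma~\ref{fitt}. (The paper first passes to $G/F(G)$ and then applies Lemma~\ref{fitt} to the further quotient by its Fitting subgroup, but that further quotient \emph{is} $\tilde G$, so the endgame is the same.) The prime-reduction step is correct, and so is the reduction of $|I_{F(\tilde G)}(\alpha)|$ to bounding the $|[\tilde N_p,\alpha]|$ for each of the boundedly many primes $p$.

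The gap is in your mechanism for bounding $|[\tilde N_p,\alpha]|$. You set $P'=[\tilde N_p,\alpha]$, $Q'=[\tilde Q,\alpha]$, form $L=P'Q'$, and apply Lemma~\ref{pq} and then Proposition~\ref{mainAut} inside $L$. But everything you can extract from $L$ alone -- a bound on $|L:F(L)|$, bounded-index subgroups $B\le P'$, $U\le L$, a bounded $|[B,U]^L|$ -- only constrains centraliser indices and commutators, never the absolute order $|P'|$. Concretely, nothing in Hypothesis~\ref{00} prevents the chosen Sylow $q$-subgroup $\tilde Q$ from centralising $\tilde N_p$ entirely: in that case $\pr(P',Q')=1$, $L$ is nilpotent, $F(L)=L$, $[B,U]$ is trivial, yet $|P'|$ is arbitrary. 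Your appeal to ``the faithful action of $\tilde G/F(\tilde G)$ on $F(\tilde G)$'' does not repair this: faithfulness of the action on the whole of $F(\tilde G)$ is perfectly compatible with $\tilde Q$ acting trivially on the single component $\tilde N_p$, so no bound on $|P'|$ follows.

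What the paper actually does at this point is essential and missing from your argument. Working in $H=G/F(G)$, it takes $Q$ to be a Sylow $q$-subgroup of $N=F_2(H)$ (not of $F(\tilde G)$), and bounds $[Q,\alpha]$ \emph{modulo} $F(H)$ rather than trying to bound a Sylow subgroup of $F(\tilde G)$ outright. For each Sylow $p_i$-subgroup $P_i$ of $F(H)$, Lemma~\ref{pq} applied to $P_i[Q,\alpha]$ gives a bounded-index centraliser $C_i$ of $P_i$ in $[Q,\alpha]$; the intersection $C=\bigcap_i C_i$ still has bounded index (boundedly many primes) and centralises $M=O_{q'}(F(H))$. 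The decisive step is Remark~\ref{metanilp}: since $N$ is metanilpotent, $C_N(O_{q'}(F(N)))\cap Q\le F(N)=F(H)$, hence $C\le F(H)$ and $|[Q,\alpha]F(H)/F(H)|$ is bounded. That metanilpotence argument is precisely what converts the ``$q$-side'' centraliser bound you noticed as the obstacle into a bound on the relevant Fitting quotient, and it has no analogue in your two-prime subgroup $L$. Your proof would need to be rewritten along those lines to close the gap.
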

\begin{proof} 
Since $G$ is soluble and $G=[G,\al]$, it follows  from Lemma \ref{easy1}  that if $p$ divides the order of $G$, then there exists an $\al$-invariant  Sylow $p$-subgroup $P$ of $G$ such that $[P, \al] \neq 1.$ 

Let $Q$ be an $\al$-invariant Sylow $q$-subgroup of $G$ for a prime $q>m$, where $m$ is as in Lemma \ref{pq}. By Lemma \ref{big}, $[Q,\al]\leq F(G)$. Therefore, by Lemma \ref{easy1}, $q$ does not divide the order of $G/F(G)$.  

Passing to the quotient over $F(G)$, we are reduced to the case where the prime divisors of $|G|$ do not exceed $m$. Therefore $\pi(G)$ contains only $\ep$-boundedly many primes. We will show that under this assumption $F(G)$ has $\ep$-bounded index in $G$ and this will imply the desired result. 
  
 Let $N=F_2(G)$ and note that $N$ satisfies Hypothesis \ref{00}. 

  Let $Q$ be  an $\al$-invariant Sylow $q$-subgroup of $N$ and set 
  \[M= O_{q'}(F(G))=P_1 \times \cdots \times P_r,\] 
  where $P_i$ is a Sylow $p_i$-subgroup of $F(G)$. We know that $r$ is $\ep$-bounded. Note that each $P_i$ is contained in every $\al$-invariant Sylow $p_i$-subgroup of $G$, so by virtue of Lemma \ref{lem:basic} and Remark \ref{remark_conj}, the group $P_i [Q,\al]$ satisfies the hypotheses of  Lemma \ref{pq}. We deduce that the index of the centralizer $C_i$ of $P_i$ in $[Q,\al]$ is $\ep$-bounded.
 Thus the intersection $C=\cap C_i$ of all centralizers has $\ep$-bounded index in $[Q,\al]$ and it centralizes $M$. As $N$ is metanilpotent, $C \le F(N)=F(G)$  (see Remark \ref{metanilp}). 
 Therefore  the order of $[Q,\al] F(G)/F(G)$ is $\ep$-bounded. 

 This holds for every prime $q$ that divides the order of  $N/F(G)$, and there are  $\ep$-boundedly many such primes. As $N/F(G)$ is nilpotent, $[N,\al]$ is the direct product of the subgroups $[Q,\al] F(G)/F(G)$, hence $[N,\al]$  has $\ep$-bounded order modulo $F(G)$. This means that $[F(G/F(G)),\al]$ has $\ep$-bounded order. Now Lemma \ref{fitt} tells us that $G/F(G)$ has $\ep$-bounded order. This completes the proof.
\end{proof}

\section{The case when $G$ is a simple group} 

This section is devoted to the proof of Theorem \ref{mainA} when $G$ is a simple group 
 and $A=\langle \al\rangle\ne 1$ is cyclic. 
  Recall that in this case $G$ 
   is a group of Lie type defined over the field $\mathbb{F}_q$ and $\alpha$ is a  field  automorphism. Furthermore, $C_G(\alpha)$ 
    is a group of the same Lie type defined over a smaller field $\mathbb{F}_{q_0}$ such that $q = q_0^e$, where 
$e=|\al|$. 

We will use notation and terminology introduced 
in~\cite{carter1}, which we briefly recall hereafter.
 Let  $L(q)$
   be a finite simple Chevalley group, with  set of  roots $\Phi$ and  set of fundamental roots $\Pi=\{r_1,\dots,r_\ell\}$. For every root $r\in\Phi$ we denote by
\[
X_r = \{\, x_r(t) \mid t \in \mathbb{F}_q \,\}
\]
the corresponding root subgroup of $L(q)$. 

Any automorphism $\varphi$ of the field $\mathbb{F}_q$ induces a field 
automorphism (also denoted by $\varphi$) of $L(q)$ 
 defined by
\[
\big(x_r(t)\big)^\varphi \;=\; x_r\big(t^\varphi\big).
\]

We fix an ordering $\{r_1<r_2<\dots<r_\ell<\dots \}$ of the set $\Phi^+$ of positive roots. Then the subgroup
\[
U \;=\; \prod_{r\in\Phi^+} X_r
\]
is an $\alpha$-invariant Sylow $p$-subgroup of $L(q)$ 
 and every element $x\in U$ can be written in a unique way as a product
\[
x\;=\; \prod_{r\in\Phi^+} x_r(t_r), 
\]
with $t_r \in \mathbb{F}_q$. 

We will make use of the following observation. 

\begin{remark}\label{comm}  Let $x,y\in U$ and write
\[
x \;=\; x_{r_1}(t_1)\,\cdots\,x_{r_\ell}(t_\ell) \, z, 
\;\;\;
y \;=\; x_{r_1}(u_1)\,\cdots\,x_{r_\ell}(u_\ell) \, w,
\]
where $z,w\in\prod_{r\in (\Phi^+\setminus\Pi)} X_r$ and  $t_i,u_i\in\F_q$. 

Then, using  the  Chevalley commutator formulas \cite[Theorem 5.2.2]{carter1}
 we can write
\[xy \;=\; x_{r_1}(t_1+u_1)\,\cdots\,x_{r_\ell}(t_\ell+u_\ell) \, u, \quad \textrm{ with }\,\,u\in\prod_{r\in (\Phi^+\setminus\Pi)} X_r.
\]
\end{remark}

We will now recall some basic facts about twisted groups of Lie type.

Let $L(q^s)$ 
be a group of Lie type whose Dynkin diagram has a nontrivial symmetry 
$\rho$ of order $s$.  
If $\tau$ denotes the corresponding graph automorphism, suppose that 
$L(q^s)$ admits a nontrivial field automorphism $\varphi$ 
such that the  automorphism 
\[
\sigma = \varphi  \tau 
\]
satisfies $\sigma^s = 1$.  
Then, the twisted group
\[
^sL(q)
\]
is defined as the subgroup of $L(q^s)$ consisting of the elements fixed 
element-wise by $\sigma$.

The structure of $^sL(q)$ is very similar to that of a Chevalley group.  
If $\Phi$ is the root system of $L(q^s)$, the automorphism $\sigma$ 
determines a partition
\[
\Phi = \bigcup_i S_i.
\]
If $S$ is one of the equivalence classes in this partition, we define
\[
X_S = \langle\, X_r \mid r \in S \,\rangle \;\subseteq L(q^s),
\]
and 
\[
X_S^1 = \{\, x \in X_S \;\mid\; x^\sigma = x \,\} \;\subseteq {}^sL(q).
\]
The group ${}^sL(q)$ is generated by the subgroups $X_S^1$. In fact, the subgroups $X_S^1$ play the role of the 
root subgroups. In particular,
the subgroup
\[
U^1 \;=\; \prod_{S_i\subseteq\Phi^+} X_{S_i}^1
\]
is an $\alpha$-invariant Sylow $p$-subgroup of ${}^sL(q)$.

\begin{lemma}\label{regular}
Let $G$ be a group of Lie type in characteristic $p$ admitting a nontrivial coprime automorphism $\al$. Let $P$ be an $\al$-invariant  Sylow $p$-subgroup of $G$. 
Then $[P,\al]$ contains a regular unipotent element $x$ with $C_G(x)\le P$. 
\end{lemma}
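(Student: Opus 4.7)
The plan is to construct $x$ in $[P,\al]$ explicitly using the root subgroup decomposition of $P$ recalled just before the statement, and then invoke the classical fact that the centralizer of a regular unipotent element of a simple group of Lie type lies in the unipotent radical of its unique Borel. By the conventions already in force, $G = L(q)$ with $q = q_0^e$ and $e = |\al|$, and $\al$ is induced by the field automorphism $t \mapsto t^\al$ of $\F_q$.

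In the untwisted case, for each fundamental root $r_i \in \Pi$ the root subgroup $X_{r_i} = \{x_{r_i}(t) \mid t \in \F_q\}$ is $\al$-invariant and abelian, with $\al$ acting on it via the same field automorphism of $\F_q$. By Lemma~\ref{coprime}(ii), $X_{r_i} = C_{X_{r_i}}(\al) \times [X_{r_i},\al]$, and since $C_{X_{r_i}}(\al) \cong (\F_{q_0},+)$ we obtain $|[X_{r_i},\al]| = q/q_0 > 1$. I choose a nonidentity element $y_i = x_{r_i}(s_i) \in [X_{r_i},\al]$, so that $s_i \ne 0$, and form the ordered product $x = y_1 y_2 \cdots y_\ell$. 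Every factor lies in $[P,\al]$, so $x \in [P,\al]$. Since $x$ is already presented in the Chevalley normal form required by Remark~\ref{comm}, with fundamental coordinates $s_1,\dots,s_\ell$ all nonzero, $x$ is a regular unipotent element of $G$.

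For the twisted groups ${}^sL(q)$, the same construction works with the subgroups $X_{S_i}^1$ (for $S_i$ ranging over the equivalence classes corresponding to fundamental roots of the twisted root system) replacing the $X_{r_i}$. Because $\al$ commutes with the Steinberg endomorphism $\sigma$ defining the twisted group and acts on the ambient field as a nontrivial field automorphism, it induces a nontrivial automorphism on each $X_{S_i}^1$; hence $[X_{S_i}^1,\al] \ne 1$, and a product of nonidentity elements, one from each fundamental $X_{S_i}^1$, taken in the prescribed order again yields a regular unipotent $x \in [P,\al]$.

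Finally, I invoke the classical result (see e.g.\ \cite{carter1}) that for a regular unipotent element $x$ of a simple group of Lie type $G$, one has $C_G(x) \subseteq U$, where $U$ is the unipotent radical of the unique Borel subgroup containing $x$. Because $x$ was chosen inside $P$ and $P$ is the unipotent radical of a Borel, $P$ must coincide with $U$ for the Borel containing $x$; thus $C_G(x) \le P$, as required. The main subtle point is the explicit construction that guarantees the regularity of $x$, together with the verification of $[X_{S_i}^1,\al] \ne 1$ in the twisted cases; the centralizer bound itself is standard.
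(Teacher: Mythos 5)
Your untwisted argument is correct and runs essentially parallel to the paper's: you use the coprime splitting $X_{r_i}=C_{X_{r_i}}(\al)\times[X_{r_i},\al]$ to pick $x_{r_i}(s_i)\in[X_{r_i},\al]$ with $s_i\ne 0$, whereas the paper simply picks one $t\in\F_q$ with $t^\al-t\ne 0$ and uses $x_r(t^\al-t)=[x_r(t),\al]$ uniformly for all $r\in\Pi$; both choices give a product in normal form with all fundamental coordinates nonzero, hence regular by \cite[Prop.\ 5.1.3]{carter2}, and both conclude $C_G(x)\le P$ from the uniqueness of the Borel through a regular unipotent.

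The twisted case, however, has a genuine gap. You argue only that $[X_{S_i}^1,\al]\ne 1$ and then take an arbitrary nonidentity element from each $[X_{S_i}^1,\al]$. This does not suffice for regularity. When a fundamental equivalence class $S$ has $|S|>j$ (which happens in type ${}^2A_{2n}$, where by \cite[Prop.\ 13.6.3]{carter1} a class can contain one or two fundamental roots $a_1,\dots,a_j$ together with non-fundamental roots $a_{j+1},\dots,a_k$), the group $X_S^1$ is \emph{nonabelian}, and its elements $x_S(t)=x_{a_1}(t)\cdots x_{a_j}(t^{\varphi_j})z$ have a ``fundamental part'' parametrized by $t$ plus a central part supported on $\Phi^+\setminus\Pi$. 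A nonidentity element of $[X_S^1,\al]$ may perfectly well have $t=0$, i.e.\ lie entirely in the non-fundamental (central) part of $X_S^1$; such an element contributes nothing to the fundamental coordinates of the product, which then fails to be regular. The paper avoids this by explicitly forming $[x_i(t),\al]$ for a fixed $t$ with $t^\al\ne t$, so that each factor has nonzero fundamental coefficient $t^{\varphi_j\al}-t^{\varphi_j}=(t^\al-t)^{\varphi_j}\ne 0$, and then invoking Remark~\ref{comm} to see that the product retains nonzero fundamental coordinates. To repair your argument you would need to choose, in each $[X_{S_i}^1,\al]$, an element whose image in the abelianization $X_{S_i}^1/(X_{S_i}^1\cap\prod_{r\in\Phi^+\setminus\Pi}X_r)$ is nontrivial (such elements exist since $\al$ acts nontrivially on that abelianization), not merely a nonidentity element.
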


\begin{proof}
First assume that $G=L(q)$ is an untwisted finite simple group of Lie type defined over the field $\mathbb{F}_q$. We may assume that the subgroup $P$ is the subgroup $U=\prod_{r\in \Phi^+}X_r$, defined above. Let $t\in \mathbb{F}_q$ be an element which is not fixed by $\alpha$, that is, $t^\alpha-t\ne 0$.
Consider the element 
\[x=\prod_{r\in \Pi}x_r(t^\alpha-t).\] 

 Then  $x$ is a regular unipotent element by Proposition 5.1.3 of \cite{carter2}. Note that $x\in [P,\al]$ because $x_r(t^\alpha-t)=x_{r}(t)^{-1}x_{r}(t)^\al\in  [P,\al]$ for each $r$.

Now assume that $G={}^sL(q)$ is of twisted type. We may assume that the subgroup $P$ is the subgroup $U^1=\prod_{S_i\subseteq\Phi^+} X_{S_i}^1$, defined above. 

Fix an equivalence class $S\subseteq\Phi^+$. 
By Proposition 13.6.3 of \cite{carter1}, we have that $S=\{a_1, \dots, a_j, a_{j+1}, \dots ,a_{k}\},$ with $j \le 3$, 
$\{a_1, \dots , a_j\} \subseteq \Pi$  and $\{ a_{j+1}, \dots ,a_{k}\} \in (\Phi^+\setminus\Pi).$ 
 Moreover there exist  automorphisms  $\varphi_i$ of $\mathbb{F}_{q^s}$ such that, for every  $t \in \mathbb{F}_{q^s}$, in  $X_S^1$ there exists an element of the form 
\[ x_S(t) = x_{a_1}(t) \cdots x_{a_j}(t^{\varphi_j}) \, z,  \quad \textrm{ with } z\in\prod_{r\in(\Phi^+\setminus\Pi)} X_r .\]  
 Let $t\in \mathbb{F}_q$ be an element which is not fixed by $\alpha$, that is, $t^\alpha-t\ne 0$. For each equivalence class $S_i\subseteq \Phi^+$  consider the element 
\[ x_i(t) = x_{a_1}(t) \cdots x_{a_j}(t^{\varphi_j}) \, z_i\in X_{S_i}^1, \quad \textrm{ with }  z_i\in\prod_{r\in(\Phi^+\setminus\Pi)} X_r\]  
  as above. 
 By Remark \ref{comm} we can write
\[ [x_i(t), \al] = x_{a_1}(t^\alpha-t)   \cdots  x_{a_j}(t^{\varphi_j \alpha}-t^{\varphi_j}) \, u_i \] 
 where  $t^{\varphi_i \alpha}-t^{\varphi_i} \neq 0$ and $u_i\in\prod_{r\in(\Phi^+\setminus\Pi)} X_r$. 
 Let 
 \[x=\prod_{S_i\subseteq\Phi^+}[x_i(t),\al].\]
  Again by Remark \ref{comm}, we can write $x$ in a unique way as
 \[ x = \prod_{r\in \Pi} x_r(t_r^\alpha-t_r)\,  u ,\] 
with $t_r^\alpha-t_r\ne 0$ and
 $u\in\prod_{r\in(\Phi^+\setminus\Pi)} X_r$. 
 Then  $x$ is a regular unipotent element by Proposition 5.1.3 of \cite{carter2}. Note that $x\in [P,\al]$ because $[x_i(t), \al]\in  [P,\al]$ for each $i$.

 The fact that, in both the twisted and untwisted cases,  $C_G(x) \le P$ follows from Corollary 4.6 of \cite{Hum}. 
\end{proof}

The following corollary is a straightforward consequence of the previous lemma.
\begin{corollary}\label{centraliz}
Let $G$ be a group of Lie type in characteristic $p$ admitting a nontrivial coprime automorphism $\al$. Let $P$ be an $\al$-invariant  Sylow $p$-subgroup of $G$. 
Then $C_G([P,\al]) \le P $.
\end{corollary}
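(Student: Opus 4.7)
The plan is to deduce Corollary \ref{centraliz} directly from Lemma \ref{regular}, essentially as a one-line consequence. By Lemma \ref{regular}, there exists a regular unipotent element $x \in [P,\alpha]$ with the property that $C_G(x) \le P$. So if $g \in G$ centralizes the entire subgroup $[P,\alpha]$, then in particular $g$ centralizes $x$, and hence $g \in C_G(x) \le P$.

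Concretely, I would write: pick $x \in [P,\alpha]$ as guaranteed by Lemma \ref{regular}. Then
\[
C_G([P,\alpha]) \;\le\; C_G(x) \;\le\; P,
\]
which is the desired conclusion. There is essentially no obstacle here since the hard work (producing a regular unipotent element inside $[P,\alpha]$ and invoking the fact that the centralizer of a regular unipotent element in a group of Lie type is contained in a Sylow $p$-subgroup, via Corollary 4.6 of \cite{Hum}) has already been carried out in the preceding lemma.
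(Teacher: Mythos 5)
Your proof is correct and takes the same approach the paper intends: the paper explicitly calls this corollary ``a straightforward consequence of the previous lemma,'' and your one-line chain $C_G([P,\alpha]) \le C_G(x) \le P$, using the regular unipotent element $x$ supplied by Lemma~\ref{regular}, is exactly that deduction.
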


If $n$ is a natural number and $p$ is a prime, the $p$-part $n_p$ (respectively, the $p'$-part $n_{p'}$) of $n$ is the largest $p$-power dividing $n$  (respectively, the largest divisor of $n$ coprime to $p$). 

\begin{lemma}\label{simple-p} 
  Under Hypothesis \ref{00} with $\al \neq 1$,
   assume that $G$ is simple. Then $G$ is a group of Lie type and
 the characteristic $p$ of $G$ is  $\ep$-bounded.
\end{lemma}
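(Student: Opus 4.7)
The plan is to bound $|[P,\alpha]|$ by an $\epsilon$-bounded constant, so that Lemma \ref{simple-order} yields the conclusion. Write $G=L(q_0^e)$ with $\alpha$ the Frobenius of $\mathbb{F}_{q_0^e}/\mathbb{F}_{q_0}$, let $\ell$ be the Lie rank of $G$, and let $h$ be the Coxeter number of $L$. The key reduction will be to exhibit an $\alpha$-invariant Sylow $r$-subgroup $Q$, at some prime $r\neq p$, satisfying $|Q|>2/\epsilon$, $[Q,\alpha]=Q$, and $C_{[P,\alpha]}(y)=1$ for every $1\neq y\in Q$. Once such a $Q$ is in hand, Hypothesis \ref{00} together with Remark \ref{remark_conj} produces an $\alpha$-invariant Sylow $P$ with $\pr([P,\alpha],Q)\geq\epsilon$, while the only commuting pairs in $[P,\alpha]\times Q$ are of the forms $(x,1)$ or $(1,y)$; hence
\[
\epsilon \leq \frac{|[P,\alpha]|+|Q|-1}{|[P,\alpha]|\,|Q|} \leq \frac{1}{|Q|}+\frac{1}{|[P,\alpha]|},
\]
and combined with $|Q|>2/\epsilon$ this forces $|[P,\alpha]|\leq 2/\epsilon$.

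To construct $Q$ I will take $r$ to be a Zsigmondy prime divisor of $q_0^{eh}-1$. By definition $r\geq eh+1$ and $r\nmid q_0^k-1$ for $k<eh$; since $eh>\ell$ this yields $r\nmid |L(q_0)|=|C_G(\alpha)|$, so $[Q,\alpha]=Q$. Moreover $Q$ lies inside a Coxeter-type maximal torus $T$ of $G$ whose order divides $q_0^{eh}-1$ (and is therefore coprime to $p$), and the bound $r\geq eh+1$ ensures $r$ exceeds all relevant Weyl-group invariants, so every nontrivial $y\in Q$ is regular semisimple with $C_G(y)=T$; consequently $C_{[P,\alpha]}(y)\leq T\cap P=1$. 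If in addition $eh\geq 2/\epsilon$, then $|Q|\geq r>2/\epsilon$ and we are done.

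The sole remaining case is $eh<2/\epsilon$, in which $e$ and $\ell$ are both $\epsilon$-bounded, so the Lie type of $G$ belongs to a finite list determined by $\epsilon$ and only $q_0$ is still free. Here I will repeat the argument with a larger cyclotomic exponent: $|G|/|C_G(\alpha)|$ is a nonconstant polynomial in $q_0$, and by the classical fact that $P^+(f(q_0))\to\infty$ as $q_0\to\infty$ for every nonconstant polynomial $f$, once $q_0$ exceeds an $\epsilon$-bounded threshold there is a prime $r'>2/\epsilon$ dividing $|G|/|C_G(\alpha)|$. Such $r'$ is automatically a primitive prime divisor of some $q_0^{em}-1$ with $em>\ell$ (being larger than the bounded Weyl exponents), so the Sylow $Q_{r'}$ still enjoys the three properties required in the first paragraph, and the same inequality gives $|[P,\alpha]|\leq 2/\epsilon$, whence $q_0$ and therefore $p$ are $\epsilon$-bounded.

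The chief technical obstacle is verifying the regular-semisimple assertion uniformly across all Lie types (including the twisted ones) and handling the finitely many Zsigmondy exceptions---notably $(q_0,eh)=(2,6)$ and the Mersenne cases at $eh=2$---which will require a separate inspection using the Chevalley commutator and twisted root-system data recalled at the start of Section~5.
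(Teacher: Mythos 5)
Your approach is genuinely different from the paper's, and it is closer in spirit to the paper's proof of the stronger Proposition~\ref{simple} (which bounds $|G|$, not merely $p$) than to the proof of Lemma~\ref{simple-p} itself. The paper's argument avoids tori and Zsigmondy primes entirely: it sets $m$ equal to the $\ep$-bound from Lemma~\ref{bound_m}, assumes $p>m$, and observes that for any prime $r\neq p$ dividing $|G:C_G(\al)|$ with $\al$-invariant Sylow $R$, Lemma~\ref{bound_m} produces $R_0\le[R,\al]$ of index $\le m$ with $|[U,\al]:C_{[U,\al]}(x)|\le m<p$ for all $x\in R_0$; since $[U,\al]$ is a $p$-group that index must be $1$, so $R_0\le C_G([U,\al])\le U$ by Corollary~\ref{centraliz} (the paper's regular \emph{unipotent} element input), forcing $R_0=1$ and $|[R,\al]|\le m$. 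This bounds the $p'$-part of $|G:C_G(\al)|$ by $m^m$, while a table check shows that this $p'$-part is at least $q_0$, so $q_0$ (hence $p$) is bounded. No torus structure, no Zsigmondy primes, no regular semisimple elements.

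Your proof has two genuine gaps beyond the one you flagged. First, the second case ($eh<2/\ep$) is not sound as written: from $r'\mid |G:C_G(\al)|$ you cannot conclude $r'\nmid|C_G(\al)|$ (the $r'$-part of $|G|$ may simply exceed that of $|C_G(\al)|$), and the assertion that a large prime factor of $|G:C_G(\al)|$ is ``automatically a primitive prime divisor of some $q_0^{em}-1$ with $em>\ell$'' is not justified --- indeed such a prime could be a primitive divisor of $q_0^k-1$ with $k$ dividing a Weyl degree of $L$, and then $r'$ \emph{does} divide $|C_G(\al)|$; moreover the correct threshold is $k$ exceeding the Coxeter number $h$, not the rank $\ell$ (the same slip appears in your first paragraph, where ``since $eh>\ell$'' should read ``since $eh>h$,'' which holds because $e\ge 3$). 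A repairable route is to apply the first-case argument to the largest prime factor of $\Phi_{eh}(q_0)$: for fixed $eh\ge 6$ this tends to infinity with $q_0$ and, once it exceeds $eh$, is a genuine Zsigmondy prime for $q_0^{eh}-1$. Second, the regular-semisimple claim $C_G(y)=T$ and $C_{[P,\al]}(y)=1$ for every nontrivial $y$ in an $\al$-invariant Sylow $r$-subgroup inside an $\al$-invariant Coxeter torus is a substantial unverified step across all types (including twisted, and including the behaviour of the component group $C_G(y)/T$ in non--simply-connected forms); you acknowledge this, but it is the bulk of the work, and the paper's route via regular unipotent elements (Lemma~\ref{regular}, already proved uniformly including twisted types) sidesteps it cleanly. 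If you do want a Zsigmondy-style argument of the strength of Proposition~\ref{simple}, the paper instead invokes the result of~\cite{gur-shar-wood} (a Zsigmondy prime missing every parabolic, combined with Borel--Tits), which packages precisely the case analysis you would otherwise have to redo.
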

\begin{proof}
Recall that $G=L(q)$ is a finite simple group  of Lie type and $q=q_0^e=p^{t e}$, where $e=|\al| \ge 3$, because the order of a coprime automorphism of a simple group must be odd. 

Let $m$ be as in Lemma \ref{bound_m} and assume that $p>m$. Let $U$ be  an $\alpha$-invariant Sylow $p$-subgroup of $G$. 
Let  $r \neq p$ be a prime dividing $|G:\Gal|$, and let  $R$ be an $\alpha$-invariant Sylow $r$-subgroup of $G$
 such that $\pr([U,\al],[R,\al])\geq\ep$. 
  Since $p >m$,  $[U,\al]$ centralizes a normal subgroup $R_0$ of index at most $m$ in $R$.  
   It follows from Corollary \ref{centraliz}  that $R_0 \le U$, whence  $R_0=1$ and $|[R,\al]| \le m$. 
    Since $R=[R,\al]C_R(\al)$, it follows that for every prime divisor $r\neq p$ of $|G:\Gal|$ we have that the $r$-part of $|G:\Gal|$ is at most $m$. This implies that  $r \le m$. 
Thus  the $p'$-part of $|G:\Gal|$ has order at most $m^m$.  
  Checking the values of $|L(q_0^e):L(q_0)|$ (see \cite[TABLE I, p. 8]{GLS1}),
  we see that the $p'$-part $ |G:\Gal|_{p'} $ of $|G:\Gal|$ 
  is at least $q_0$. 
  Therefore $q_0 \le |G:\Gal|_{p'} \le  m^m$, which proves that $p$ is $\ep$-bounded. 
 \end{proof}

\begin{proposition}\label{simple} Under Hypothesis \ref{00} with $\al \neq 1$, 
 assume that $G$ is simple. 
 Then $|G|$ is $\ep$-bounded.
\end{proposition}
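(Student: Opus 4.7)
The plan is to sharpen the arguments used in Lemma \ref{simple-p} so as to bound not only the characteristic $p$ of $G = L(q)$, but also $q_0$, the order $e = |\al| \geq 3$, and the Lie rank $r$ of $G$. Write $q = q_0^e$ and $C_G(\al) = L(q_0)$, as usual. Once $q_0$, $e$, and $r$ are all $\ep$-bounded, $q = q_0^e$ is $\ep$-bounded and $|G|$ --- a polynomial in $q$ whose degree depends only on $r$ --- is $\ep$-bounded, as required.

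The first two bounds are already implicit in the proof of Lemma \ref{simple-p}. In that argument one shows that for every prime $r \neq p$ dividing $|G:C_G(\al)|$, the $r$-part of this index satisfies $|[R,\al]| \leq m$, where $m = m(\ep)$ is the constant produced by Lemma \ref{bound_m}; hence $|G:C_G(\al)|_{p'} \leq m^m$. This gives $p$ $\ep$-bounded and, as already observed in that proof, $q_0 \leq m^m$.

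The main step is to produce a matching \emph{lower} bound for $|G:C_G(\al)|_{p'} = |L(q_0^e):L(q_0)|_{p'}$. Inspecting the order formulas in \cite[Table I]{GLS1}, this $p'$-part factors (up to a gcd correction bounded in terms of $r$) as $\prod_{i=1}^{r}(q_0^{e d_i} - \epsilon_i)/(q_0^{d_i} - \epsilon_i)$, with appropriate modifications in the twisted families. A direct computation shows that each factor is at least $q_0^{(e-1)d_i}/2$; combined with $d_i \geq 1$, $q_0 \geq 2$ and $e \geq 3$, this yields
\[
2^{(e-2)r} \leq |G:C_G(\al)|_{p'} \leq m^m,
\]
which forces $(e-2)r$ --- and consequently both $e$ and $r$ --- to be $\ep$-bounded. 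This completes the argument.

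The main technical obstacle I foresee is verifying the lower bound on $|L(q_0^e):L(q_0)|_{p'}$ uniformly across the Lie families, and in particular for the twisted groups ${}^sL(q)$, whose order formulas take a slightly different shape and require careful bookkeeping of the relevant cyclotomic-like factors. Nonetheless, the raw data in \cite[Table I]{GLS1} makes such a case-by-case check routine, and the estimate $q_0^{(e-1)d_i}/2$ for each factor survives in every family.
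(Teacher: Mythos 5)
Your strategy is to recycle the upper bound $|G:C_G(\al)|_{p'} \le m^m$ from the proof of Lemma~\ref{simple-p} and combine it with a lower bound $|G:C_G(\al)|_{p'}\ge 2^{(e-2)r}$. There is a genuine gap here: the inequality $|G:C_G(\al)|_{p'}\le m^m$ is \emph{not} established unconditionally in Lemma~\ref{simple-p}. That proof begins by assuming $p>m$, and this assumption is used essentially: it is what forces the indices $|[U,\al]:C_{[U,\al]}(x)|\le m$ (which are $p$-powers) to be trivial, so that $[U,\al]$ actually centralizes the subgroup $R_0$ and Corollary~\ref{centraliz} can be invoked. The conclusion of that lemma is then: either $p\le m$, or $p\le q_0\le m^m$; so $p\le m^m$ in both cases. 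But when $p\le m$ (which is perfectly possible), no control on $q_0$ or on $|G:C_G(\al)|_{p'}$ is obtained, and your chain $2^{(e-2)r}\le |G:C_G(\al)|_{p'}\le m^m$ simply has no right-hand side. Since in characteristic $2$ or $3$ the field exponent $f$ (hence $q_0$) can be arbitrarily large a priori, this is not a minor technicality: it is precisely the hard case.

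The paper takes an entirely different route at this point, which sidesteps the case distinction on $p$ altogether. Using \cite{gur-shar-wood}, it fixes a Zsigmondy prime $r$ for $(p,f|\al|e)$ that divides $|G|$ but neither $|C_G(\al)|$ nor the order of any parabolic. Then $[R,\al]=R$ for the $\al$-invariant Sylow $r$-subgroup, and — because centralizers of nontrivial unipotent elements lie in parabolics — no nontrivial element of $R$ commutes with any nontrivial element of $[P,\al]$. This forces
\[
\pr([P,\al],R)=\tfrac{1}{|R|}+\tfrac{1}{|[P,\al]|}-\tfrac{1}{|R|\,|[P,\al]|}\ge\ep,
\]
so one of $|R|$, $|[P,\al]|$ is $\ep$-bounded; the first option bounds $f|\al|e$ (a Zsigmondy prime for $(p,f|\al|e)$ is at least $f|\al|e$), hence $q$ and the rank, while the second option bounds $|G|$ via Lemma~\ref{simple-order}. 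If you want to push your order-formula computation through, you would first have to find an unconditional upper bound on $|G:C_G(\al)|_{p'}$, which is exactly what the Zsigmondy argument avoids having to do.
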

\begin{proof} As $G$ is simple,  it is a group of Lie type defined over a field of size $q$ and $\Gal$ is a finite simple group of the same Lie type and same rank, defined over a field of size $q_0 $ such that $q=q_0^{|\al|}=   p^{f |\al|}$, for some prime $p$ and some integer $f$. 
   By Lemma  \ref{simple-p} we know that  $p$ is $\ep$-bounded. 
  
According to \cite{gur-shar-wood} for every finite simple group $G$  in the list of \cite[Table 2]{gur-shar-wood}, 
 there is  an exponent $e$ such that a Zsigmondy prime $r$ for $(q, e)$, that is a primitive prime divisor of $q^e-1$, divides the order of $G$ but does not divide the order of any parabolic subgroup of $G$. 
    
    Assume that $G$ is one of the groups  in \cite[Table 2]{gur-shar-wood}, and let $e$ be the corresponding exponent. 
  We remark that, as 
   $q^e=p^{ f |{\al}| e}$,  a Zsigmondy prime for $(p, f |{\al}| e)$ is also a Zsigmondy prime  for $(q,e)$ and therefore divides the order of $G$.

    Consider  a  Zsigmondy prime $r$  for $(p, f |{\al}| e)$ and note that $r\ge f |{\al}| e$, as the order of $p$ modulo $r$ is precisely  $f |{\al}| e$.
  As $r$ does not divide $p^t-1$ for $t < f |{\al}| e$, in particular $r$ does not divide $q_0^s -1$ for $s <|{\al}| e$, and therefore $r$ does not divide the order of $\Gal$ (see e.g. \cite[Table I]{GLS1}).
   It follows that, if $R$ is an $\al$-invariant  $r$-subgroup of $G$, then $[R, \al]=R$. 
    
     Moreover,  since the centralizer of a unipotent element is always contained in a parabolic subgroup (see e.g.   \cite[Theorem 26.5, (Borel-Tits)]{malle-test}) we deduce that there is no $r$-element  that  centralizes a $p$-element of $G$. 
   Therefore, if $P$ is an $\al$-invariant Sylow $p$-subgroup of $G$  such that $\pr([P,\al],R)=\pr([P,\al],[R,\al])\geq\ep$, then 
   \[ \{ (x,y) \in [P, \al]\times R \mid [x,y]=1\}= 
   \{ (x,1 ) \mid x \in [P, \al] \} \cup  \{ (1,y ) \mid y \in R \} \]
    and 
\[ \pr([P, \al], R) = \frac1{|R|} + \frac 1{|[P,\al]|}- \frac 1{|[P,\al]| |R|} \ge \ep.\]
 Thus  one of 
  $|R|$ and $|[P,\alpha]|$ is $\ep$-bounded.
  
  If $|R|$, and hence $r$, is $\ep$-bounded,  then $f |{\al}| e$ is $\ep$-bounded, as  $r\ge f |{\al}| e$.
   So the rank of $G$ is $\ep$-bounded, since $e$ is defined as in \cite[Table 2]{gur-shar-wood}. Moreover, as $q=p^{ f |{\al}|}$, also $q$ is $\ep$-bounded. Hence, we conclude that the order of $G$ is $\ep$-bounded. 
   
   If $|[P,\al] |$ is $\ep$-bounded, then the order of $G$ is $\ep$-bounded by Lemma \ref{simple-order}. 

We are left with the cases where $G$ is not in the list of \cite[Table 2]{gur-shar-wood}. 
 As discussed in \cite[Section 4.1]{gur-shar-wood}, we have that $G = L_2(p)$ for
some Mersenne prime $p$ or the type of $G$ is one of 
 \[G_2(2), A^+_5 (2), A^-_2 (2),
A^-_3 (2), B_3(2), C_3(2), D^+_4 (2).\]
  In all these cases, $q=q_0=p$, against the fact that $\al$ is a nontrivial field automorphism of $G$.  
  The proof is now complete.
\end{proof}

\section{The case when $A$ is cyclic} 
This section is devoted to the proof of Theorem \ref{mainA} when $A$ is cyclic.

\begin{lemma}\label{general_pq} Assume that $G$ is  group admitting a coprime automorphism $\alpha$ such that $G=[G,\al]$ and for any distinct primes $p,q\in\pi(G)$ there exists a  Sylow $p$-subgroup $P$ and a Sylow $q$-subgroup $Q$ in $G$, both $\al$-invariant,  for which $[\, [P,\al],[Q,\al] \,]=1$. Then $G$ is nilpotent.
\end{lemma}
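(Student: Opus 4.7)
I would argue by induction on $|G|$. The hypotheses descend to every $\alpha$-invariant quotient $\bar G = G/N$: the equality $\bar G = [\bar G,\alpha]$ is immediate, every $\alpha$-invariant Sylow subgroup of $\bar G$ is the image of an $\alpha$-invariant Sylow subgroup of $G$ by Lemma \ref{coprime}, and $[PN/N,\alpha] = [P,\alpha]N/N$, so the triviality of the relevant commutators passes to $\bar G$. Hence by induction every proper $\alpha$-invariant quotient of $G$ is nilpotent. Suppose $G$ itself is not nilpotent. If $G$ had two distinct minimal $\alpha$-invariant normal subgroups $N_1,N_2$, then $N_1\cap N_2=1$ and the diagonal embedding $G\hookrightarrow G/N_1\times G/N_2$ would put $G$ inside a nilpotent group, forcing $G$ nilpotent --- contradiction. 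So $G$ has a unique minimal $\alpha$-invariant normal subgroup $N$, with $G/N$ nilpotent, and I derive a contradiction.

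\emph{Nonabelian case.} If $N$ is nonabelian, then $N=S_1\times\cdots\times S_k$ is a direct product of isomorphic nonabelian simple groups and $C_G(N)=1$ (else $C_G(N)$ would contain a second minimal $\alpha$-invariant normal subgroup of $G$). Grouping the $S_i$ into $\langle\alpha\rangle$-orbits and identifying each orbit with a single finite simple group of Lie type over an extension field on which $\alpha$ acts as a field automorphism, let $p$ denote the common defining characteristic. Choose an $\alpha$-invariant Sylow $p$-subgroup $P$ of $G$ such that $P\cap N$ is Sylow $p$ in $N$. By Lemma \ref{regular}, $[P\cap N,\alpha]\subseteq[P,\alpha]$ contains a regular unipotent element $x$ with $C_N(x)\le P\cap N$ (Corollary \ref{centraliz}). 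For each prime $r\in\pi(N)\setminus\{p\}$, pick the $\alpha$-invariant Sylow $r$-subgroup $R$ paired with $P$ by the hypothesis; then $[R\cap N,\alpha]\le[R,\alpha]$ centralizes $x$ and is an $r$-group, forcing $[R\cap N,\alpha]=1$. Hence every $\alpha$-invariant Sylow $r$-subgroup of $N$ (for $r\ne p$) lies in $C_N(\alpha)$, making $|N:C_N(\alpha)|$ a $p$-power, which contradicts the formula for $|L(q):L(q_0)|$ with $q=q_0^{|\alpha|}$.

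\emph{Abelian case.} If $N$ is an elementary abelian $p$-group, then $G$ is solvable. If $[N,\alpha]=1$, Lemma \ref{coprime}(v) together with $G=[G,\alpha]$ yields $N\le Z(G)$; then $G$ is a central extension of the nilpotent group $G/N$, hence nilpotent --- contradiction. Thus $[N,\alpha]\ne 1$, and minimality of $N$ gives $\langle[N,\alpha]^G\rangle=N$. If $\pi(G)=\{p\}$ then $G$ is a $p$-group and nilpotent (contradiction), so pick $q\in\pi(G)\setminus\{p\}$. Since $N\le O_p(G)$ is contained in every Sylow $p$-subgroup, every $\alpha$-invariant Sylow $q$-subgroup $Q$ of $G$ pairs (up to $C_G(\alpha)$-conjugation) with an $\alpha$-invariant Sylow $p$-subgroup $P\supseteq N$ satisfying $[[P,\alpha],[Q,\alpha]]=1$, so $[Q,\alpha]$ centralizes $[P,\alpha]\supseteq[N,\alpha]$. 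Using $\langle[N,\alpha]^G\rangle=N$ and propagating the centralization via $G$-conjugates, I would conclude that every $\alpha$-invariant Sylow $q$-subgroup lies in $C_G(N)$; doing this for all $q\ne p$ shows the Hall $p'$-subgroup of $G$ centralizes $N$, so $G/C_G(N)$ is a $p$-group. Combined with the nilpotency of $G/N$, a stabilizer-of-a-series argument then forces $G$ to be nilpotent --- contradiction.

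\emph{Main obstacle.} The nonabelian case is the crux, requiring careful use of Lemma \ref{regular} and Corollary \ref{centraliz} together with the identification of $\langle\alpha\rangle$-orbits of simple factors as single Lie-type groups over extension fields. The abelian case is subtler than it first appears, because $[N,\alpha]$ is generally not normal in $G$, so the centralization of $[N,\alpha]$ by $[Q,\alpha]$ must be propagated to all of $N$ via $G$-conjugation combined with the uniqueness of $N$.
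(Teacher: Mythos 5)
Your overall architecture (induction, unique minimal $\alpha$-invariant normal subgroup $N$ with $G/N$ nilpotent, split into abelian and nonabelian cases, final contradiction via regular unipotent elements and the field-extension index count) matches the paper's. However, both of your cases have concrete gaps that the paper closes with different, sharper observations.

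\emph{Abelian case.} You aim to show that every $\alpha$-invariant Sylow $q$-subgroup (for $q$ distinct from the prime $p$ of $N$) centralizes all of $N$, via ``propagating the centralization through $G$-conjugates of $[N,\alpha]$''. This does not go through: from $[Q,\alpha]$ centralizing $[N,\alpha]$ you only get that $[Q,\alpha]^g$ centralizes $[N,\alpha]^g$, which does not combine to show $[Q,\alpha]$ centralizes $\langle[N,\alpha]^G\rangle=N$, nor that $Q$ itself (rather than $[Q,\alpha]$) centralizes anything. The paper avoids this altogether by two observations you do not make: first, the Sylow subgroup for the prime of $N$ is normal (its image modulo $N$ is a Sylow subgroup of the nilpotent quotient) and therefore $N\le Z(O_p(G))$ by minimality of $N$; second, for each other prime the relevant $\alpha$-invariant Sylow subgroup $R$ satisfies $[R,\alpha]=R$, since $[R,\alpha]N=RN$ and $R\cap N=1$. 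Combined, these show that \emph{every} Sylow subgroup of $G$ centralizes the subgroup $[N,\alpha]$, hence $[N,\alpha]\le Z(G)\cap N=1$ and $N$ is central, a contradiction. Note that the paper does \emph{not} prove (and need not prove) your stronger claim that the full Hall $p'$-subgroup centralizes $N$.

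\emph{Nonabelian case.} You apply Lemma \ref{regular} and Corollary \ref{centraliz} directly to $N$, but these are results about a \emph{simple} group of Lie type, and your $N$ is a direct product on whose factors $\alpha$ may act by a nontrivial permutation. The ``identification of $\langle\alpha\rangle$-orbits with single Lie-type groups over extension fields'' is not a literal identification: an orbit $S\times S^\alpha\times\cdots\times S^{\alpha^{m-1}}$ is not a simple group, and there is no field-extension interpretation of it. In particular it is not clear that $[P\cap N,\alpha]$ contains an element that is regular unipotent in \emph{every} component, and when $m>1$ the index $|N:C_N(\alpha)|$ is no longer governed by the formula $|L(q):L(q_0)|$. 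The paper sidesteps all of this by first using the minimality of the counterexample to force $G=N$ and then $G$ simple, after which the appeal to Corollary \ref{centraliz} and the $p$-power contradiction are clean. You flagged this as the ``main obstacle'', and indeed it is a genuine gap rather than a routine technicality; the fix is the reduction to the simple case, not a direct assault on $N$.
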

 
\begin{proof} Assume by contradiction that that the statement is false and let $G$ be a counterexample of minimal order. Let $N_1$, $N_2$ be two minimal normal minimal $\alpha$-invariant subgroups of $G$, then $G/N_1$ and $G/N_2$ are both nilpotent. If $N_1\ne N_2$, then $G$ embeds into the direct product $G/N_1\times G/N_2$, which is nilpotent, a contradiction. So $G$ has a unique minimal normal  $\alpha$-invariant subgroup $N$, which is  either elementary abelian or a direct product of isomorphic (nonabelian) simple groups. 

Suppose that $N$ is an elementary abelian $q$-group. Let $Q$ be a Sylow $q$-subgroup of $G$; as $G/N$ is nilpotent and $[G/N,\al]=G/N$, it follows that $Q$ is normal in $G$. Moreover $N\le Z(Q)$, by minimality. 
 
Let $P$ be a Sylow $p$-subgroup of $G$ with $p\ne q$. As $[G/N,\al]=G/N$ and $G/N$ is nilpotent, it follows that $[P,\al]N=PN.$ Therefore $[P,\al]=P$ for every Sylow $p$-subgroup $P$ with $p\ne q$. 

Since $G/N$ is nilpotent, $N$ is not contained in $Z(G)$. So, by minimality, $N \cap Z(G)=1$. By assumptions, for any prime $p\ne q$ there exists an $\al$-invariant  Sylow $p$-subgroup $P$ of $G$, for which $[P,[N,\al]]=1$.  Moreover $[N, \al] \le N \le Z(Q)$. It follows that $ [N, \al] \le Z(G)$. Then $[N, \al] \le N \cap Z(G)=1$ and, by Lemma \ref{coprime}, we conclude that $N \le Z(G)$, a contradiction.
 
So $N$ is a direct product of simple groups and $G=N$, by minimality. Again by minimality, $G$  must be a simple group. 

Therefore $G$ is a simple group of Lie type, say in characteristic $p$, and $\alpha$ is a field automorphism. Let $P$ be an $\alpha$-invariant Sylow $p$-subgroup of $G$. Note that $[P, \alpha]\neq 1$. Recall that by Corollary \ref{centraliz} the centralizer of $[P, \alpha]$ is contained in $P$. Therefore for every prime $q\ne p$ there exists an $\alpha$-invariant Sylow $q$-subgroup $Q$ of $G$ such that $[Q,\alpha] =1$ and $Q=\Qal \le  C_G(\alpha)$. Thus the index $|G:C_G(\alpha)|$ is a $p$-power, a contradiction. 
\end{proof}

The following corollary is a straightforward consequence of the above lemma. 
\begin{corollary}\label{simple_pq} Assume that $G$ is a simple group admitting a nontrivial coprime automorphism $\alpha$. Then there exist two distinct primes $p,q\in\pi(G)$ such that for every $\al$-invariant Sylow $p$-subgroup $P$ and every
$\al$-invariant Sylow $q$-subgroup $Q$ we have that $[P,\al]$ does not commute with $[Q,\al]$.
\end{corollary}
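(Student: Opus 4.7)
The plan is to derive Corollary \ref{simple_pq} directly from Lemma \ref{general_pq} by contrapositive. The key observation is that for a nonabelian simple group $G$ admitting a nontrivial coprime automorphism $\alpha$, the hypothesis $G=[G,\alpha]$ required by Lemma \ref{general_pq} is automatic: the subgroup $[G,\alpha]$ is normal in $G$, hence by simplicity it equals either $1$ or $G$. The former would mean $g^\alpha = g$ for every $g\in G$, forcing $\alpha$ to act trivially and contradicting $\alpha\ne 1$. Therefore $G=[G,\alpha]$.

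Given this, I would argue by contradiction. If the corollary fails, then for every pair of distinct primes $p,q\in\pi(G)$ one can find an $\alpha$-invariant Sylow $p$-subgroup $P$ and an $\alpha$-invariant Sylow $q$-subgroup $Q$ of $G$ with $[[P,\alpha],[Q,\alpha]]=1$. But this is precisely the hypothesis of Lemma \ref{general_pq}, whose conclusion then forces $G$ to be nilpotent. Since a nonabelian finite simple group has trivial center and is therefore not nilpotent, we reach the desired contradiction. No substantive obstacle arises at this stage: all of the hard work --- in particular the Lie-theoretic step invoking Corollary \ref{centraliz} to handle the simple group case inside the proof of Lemma \ref{general_pq} --- has already been carried out, and the corollary is obtained as the immediate contrapositive rephrasing restricted to simple groups.
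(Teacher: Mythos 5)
Your argument is correct and matches the paper's approach: the paper explicitly states that Corollary~\ref{simple_pq} is ``a straightforward consequence'' of Lemma~\ref{general_pq}, and you have supplied the right contrapositive reading together with the (easy but worth noting) observation that $G=[G,\alpha]$ holds automatically for a nonabelian simple $G$ with $\alpha\ne1$, since $[G,\alpha]$ is a nontrivial normal subgroup.
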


By a semisimple group we mean the direct product of finite simple groups. 

\begin{lemma}\label{semisimple-base} 
Under Hypothesis \ref{00},  assume that $G$ is semisimple and has no nontrivial proper $\al$-invariant normal subgroups.  
 Then the order of $G$ is $\ep$-bounded.
\end{lemma}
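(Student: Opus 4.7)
The plan is to reduce the bound on $|G|$ to two independent sub-questions: bound the order of a single simple factor, and bound the number of factors. Since $G$ is semisimple, write $G=S_1\times\cdots\times S_n$ with each $S_i$ nonabelian simple. Because $G$ admits no nontrivial proper $\al$-invariant normal subgroup, $\al$ permutes the $S_i$ transitively, and we tacitly assume $\al\ne 1$ throughout (otherwise $G$ would be a single simple factor with trivial action and the conclusion could not hold). If $n=1$ the bound is immediate from Proposition \ref{simple}, so I focus on $n\ge 2$. Identifying each $S_i$ with a common simple group $S$, one may arrange $\al$ to act as $(s_1,\ldots,s_n)\mapsto(s_n^\phi,s_1,\ldots,s_{n-1})$ for some coprime automorphism $\phi$ of $S$ (possibly trivial); every $\al$-invariant Sylow $p$-subgroup of $G$ then has the form $P=P_0\times\cdots\times P_0$ for a $\phi$-invariant Sylow $p$-subgroup $P_0$ of $S$, and similarly for $Q$ with a $\phi$-invariant Sylow $q$-subgroup $Q_0$.

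The main technical tool will be a projection/fiber estimate. From the explicit formula
\[
[x,\al]=(x_1^{-1}x_n^\phi,\,x_2^{-1}x_1,\ldots,x_n^{-1}x_{n-1}),
\]
one checks that the coordinate projection $\pi_1\colon[P,\al]\to P_0$ is a surjective group homomorphism. Since commutation in the direct product $G=S^n$ is coordinate-wise, a standard fibering count yields $\pr([P,\al],[Q,\al])\le\pr(P_0,Q_0)$. Combined with the hypothesis, this produces $\pr(P_0,Q_0)\ge\ep$ at every pair of distinct primes in $\pi(S)=\pi(G)$, and Theorem \ref{old} applied to $S$ makes $|S\!:\!F_2(S)|$ an $\ep$-bounded quantity; since $S$ is nonabelian simple, $F_2(S)=1$, so $|S|$ itself is $\ep$-bounded. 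Pushing the projection argument further, the map $[P,\al]\to P_0^{n-1}$ dropping the last coordinate is also surjective — an explicit preimage of any $(y_1,\ldots,y_{n-1})\in P_0^{n-1}$ can be read off directly from the formula for $[x,\al]$ — and running the same coordinate-wise count over the first $n-1$ coordinates sharpens the estimate to
\[
\pr([P,\al],[Q,\al])\ \le\ \pr(P_0,Q_0)^{n-1}.
\]

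The main obstacle is then to bound $n$, and this is where the sharpened estimate becomes useful: if I can exhibit a single pair $(p,q)$ for which the hypothesis-supplied Sylows $P_0,Q_0$ of $S$ satisfy $[P_0,Q_0]\ne 1$, then Lemma \ref{DLMS_pq} gives $\pr(P_0,Q_0)\le 3/4$ and the sharpened inequality forces $(3/4)^{n-1}\ge\ep$, making $n$ an $\ep$-bounded quantity. To produce such a pair, I argue by contradiction. Since $\al\ne 1$ and $G$ has no nontrivial proper $\al$-invariant normal subgroup, $[G,\al]$ is either trivial or equal to $G$, and the former is excluded, so $G=[G,\al]$. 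If for every distinct $p,q\in\pi(G)$ the hypothesis-supplied $P_0$ and $Q_0$ commuted, then the corresponding $\al$-invariant Sylows $P=P_0^n$ and $Q=Q_0^n$ of $G$ would also commute, and in particular $[[P,\al],[Q,\al]]=1$. Lemma \ref{general_pq} would then force $G$ to be nilpotent, contradicting the fact that $G$ is a nontrivial direct product of nonabelian simple groups. Hence the required pair exists, $n$ is $\ep$-bounded, and consequently $|G|=|S|^n$ is $\ep$-bounded, completing the argument.
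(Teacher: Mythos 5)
Your proof is correct, and it diverges from the paper's argument in the second half in an interesting way. The first step is the same in spirit: both proofs project $[P,\al]$ onto one simple factor, note that the projection is all of the Sylow $P_0$ of $S$ (in the paper's notation, $\bar\pi([P^{\langle\al\rangle},\al])=P$), apply Lemma~\ref{lem:basic}(1) to get $\pr(P_0,Q_0)\ge\ep$, and invoke Theorem~\ref{old} for $S$ to bound $|S|$. Where you differ is in bounding the number of factors $n$. The paper estimates the index $|P^{\langle\al\rangle}:[P^{\langle\al\rangle},\al]|\le|P|$ to deduce $\pr(P^{\langle\al\rangle},Q^{\langle\al\rangle})\ge\ep/|S|$, and then applies Theorem~\ref{old} a second time, this time to the full group $G$, to bound $|G|$ directly. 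You instead observe that $[P,\al]$ projects surjectively onto $P_0^{n-1}$ (a nice explicit calculation from the twisted-cycle form of $\al$), so by Lemma~\ref{lem:basic}(1) and (3) you get $\pr(P_0,Q_0)^{n-1}\ge\ep$; then Lemma~\ref{general_pq} supplies a pair $(p,q)$ for which the hypothesis-given $P_0,Q_0$ fail to commute, Lemma~\ref{DLMS_pq} gives $\pr(P_0,Q_0)\le 3/4$, and $(3/4)^{n-1}\ge\ep$ bounds $n$. This is close in spirit to how the paper bounds the number $s$ of transitive components in the next lemma (Lemma~\ref{semisimple}), so you have effectively merged that counting argument into the transitive case, replacing the paper's second invocation of Theorem~\ref{old} with the more combinatorial pair Lemma~\ref{general_pq} plus Lemma~\ref{DLMS_pq}. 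Both routes work; the paper's is shorter to state, yours isolates $n$ explicitly and gives a concrete $(3/4)^{n-1}$ bound. Your parenthetical that $\al\neq 1$ must be tacitly assumed is also correct and is present implicitly in the paper's proof too (Proposition~\ref{simple} carries the hypothesis $\al\neq 1$, and the paper's opening assertion ``$G=[G,\al]$'' needs it).
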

\begin{proof} 
 Note that the assumptions imply that $G=[G,\al]$. If $G$ is simple, the lemma follows from Proposition \ref{simple}. So we assume that 
 \[G=S\times S^\al\dots\times S^{\al^{s-1}},\]
  where $S$ is a simple factor and $s\geq 2$. 

Note that $\al^{s}$ normalizes $S$ and if $P$ is an $\al^{s}$-invariant Sylow $p$-subgroup of $S$, then $P^{\langle\al\rangle}$ is an  $\al$-invariant Sylow $p$-subgroup  of $G$. As any two $\al$-invariant  Sylow $p$-subgroups  of $G$ are conjugate by an element of $C_G(\al)$, it follows that any $\al$-invariant  Sylow $p$-subgroup  of $G$ is of the form $P^{\langle\al\rangle}$, where $P$ is an $\al^s$-invariant  Sylow $p$-subgroup  of $S$. 
  
So let $P^{\langle\al\rangle}, Q^{\langle\al\rangle}$ be as in Hypothesis \ref{00}. Note that if $x\in S$ and $\bar\pi: G\to S$ is the projection on $S$, then $\bar\pi([x,\al])=x^{-1}$. Therefore $\bar\pi([P^{\langle\al\rangle},\al])=P$ and $\bar\pi([Q^{\langle\al\rangle},\al])=Q$. 
It follows from Lemma \ref{lem:basic} that
\[\pr(P,Q)\ge \pr([P^{\langle\al\rangle},\al],[Q^{\langle\al\rangle},\al])\ge \ep.\]
This proves that for any distinct primes  $p,q\in\pi(S)$ there is a Sylow $p$-subgroup $P$ and a Sylow $q$-subgroup $Q$ of $S$ such that $\pr(P,Q) \ge \epsilon$.
It follows from Theorem 1.1  of \cite{dlms} that the order of $S$ is $\ep$-bounded. 

Since $P^{\langle\al\rangle}=P \times P^\al \times\cdots \times P^{\al^{s-1}}$, it follows that $C_{  P^{\langle\al\rangle}}(\al)$ consists of elements of the form $(x,x^\al, \dots,x^{\al^{s-1}})$, where $x$ ranges over $C_P(\al^s)$. Since $\al $ is coprime, $P^{\langle\al\rangle}=[P^{\langle\al\rangle}, \al] C_{  P^{\langle\al\rangle}}(\al)$, and so 
 $[P^{\langle\al\rangle}, \al] $ has index at most $|P|$ in $P^{\langle\al\rangle}$. 
 Similarly, $|Q^{\langle\al\rangle}:[Q^{\langle\al\rangle},\al]| \le |Q|$. 
 Therefore, by Lemma \ref{lem:basic} (2), we obtain that
\[ \pr(P^{\langle\al\rangle}, Q^{\langle\al\rangle}) 
\ge\frac{1}{|P|} \frac{1}{|Q|} \pr \left( [P^{\langle\al\rangle},\al],[Q^{\langle\al\rangle},\al] \right)\geq\frac{1}{|S|}\ep.\] 
 Since  $|S|$ is $\ep$-bounded, we deduce that there exists $\tilde \ep$, depending only on $\ep$, such that for any distinct primes  $p,q\in\pi(G)$  
 there is a Sylow $p$-subgroup $\tilde P$ and a Sylow $q$-subgroup $\tilde Q$ of $G$ such that $\pr(\tilde P, \tilde Q) \ge \tilde \epsilon$.
Now the result follows from Theorem 1.1  of \cite{dlms}. This completes the proof.
\end{proof}

\begin{lemma}\label{semisimple} Under Hypothesis \ref{00}, assume that $G$ is semisimple and $G=[G,\alpha]$.  
Then $|G|$ is $\ep$-bounded.
\end{lemma}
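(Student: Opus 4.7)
The plan is to decompose $G$ as a direct product of minimal $\alpha$-invariant normal subgroups, apply Lemma \ref{semisimple-base} to each factor, and then bound the number of factors by a pigeonhole argument using Lemma \ref{DLMS_pq}.

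Write $G = S_1 \times \cdots \times S_n$ with each $S_j$ nonabelian simple. Since $\alpha$ permutes the $S_j$, grouping them into $\alpha$-orbits yields $G = M_1 \times \cdots \times M_r$, where each $M_i$ is the product of the simple factors in a single $\alpha$-orbit; these $M_i$ are precisely the minimal $\alpha$-invariant normal subgroups of $G$. Each $M_i$ inherits Hypothesis \ref{00} from $G$ (as observed at the start of Section 4), and $[M_i,\alpha]$ is an $\alpha$-invariant normal subgroup of $M_i$. By minimality, $[M_i,\alpha] = 1$ or $M_i$; the first possibility, together with $G = [G,\alpha] = \prod_k [M_k,\alpha]$, would force $M_i = 1$. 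Hence $M_i = [M_i,\alpha]$, and Lemma \ref{semisimple-base} implies that $|M_i|$ is $\ep$-bounded. In particular every prime in $\pi(M_i)$ is $\ep$-bounded, so $|\pi(G)|$ itself is $\ep$-bounded.

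To control $r$ I would apply the contrapositive of Lemma \ref{general_pq}: since each $M_i$ is semisimple, nontrivial, and satisfies $M_i = [M_i,\alpha]$, it is not nilpotent, and so there exist distinct primes $p_i, q_i \in \pi(M_i)$ such that $[[P,\alpha],[Q,\alpha]] \neq 1$ for every $\alpha$-invariant Sylow $p_i$-subgroup $P$ and every $\alpha$-invariant Sylow $q_i$-subgroup $Q$ of $M_i$. Since $|\pi(G)|$ is $\ep$-bounded, only $\ep$-boundedly many unordered prime pairs are available, so by pigeonhole some pair $(p,q)$ equals $(p_i,q_i)$ for at least $t \ge r/N$ indices $i$, with $N$ an $\ep$-bounded integer.

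The main obstacle — and the step where the hypothesis is decisively used — is combining these ingredients into a bound on $r$. Applying Hypothesis \ref{00} to the pair $(p,q)$ yields $\alpha$-invariant Sylow subgroups $P, Q$ of $G$ with $\pr([P,\alpha],[Q,\alpha]) \ge \ep$. Decomposing $P = \prod_k P_k$ and $Q = \prod_k Q_k$ into their components inside the direct factors $M_k$ (each of which is automatically an $\alpha$-invariant Sylow subgroup of $M_k$), Lemma \ref{lem:basic}(3) yields
\[ \pr([P,\alpha],[Q,\alpha]) = \prod_k \pr([P_k,\alpha],[Q_k,\alpha]). \]
For each of the $t$ indices $i$ with $(p_i,q_i) = (p,q)$, the non-commutation forced by the choice of $(p_i,q_i)$ allows Lemma \ref{DLMS_pq} to bound the corresponding factor by $3/4$. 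Consequently $\ep \le (3/4)^t$, so $t$ is $\ep$-bounded, and therefore $r \le N t$ and $|G| = \prod_i |M_i|$ are $\ep$-bounded.
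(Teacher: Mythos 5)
Your proof is correct, and while it follows the same overall skeleton as the paper (decompose into minimal $\alpha$-invariant normal subgroups $M_i$, bound each $|M_i|$ via Lemma \ref{semisimple-base}, then bound the number of factors using the product formula from Lemma \ref{lem:basic}(3) together with a $3/4$-bound from Lemma \ref{DLMS_pq}), the step where you bound the number of factors differs in a genuine way. The paper first reduces (by pigeonhole on isomorphism types) to the case where all the $T_i$ are isomorphic to a fixed semisimple group $T$, and then splits into two cases: if $T$ is simple it invokes Corollary \ref{simple_pq}, and if $T$ is a nontrivial $\alpha$-orbit of simple groups it falls back on Proposition 3.1 of the reference \cite{dlms} about ordinary Sylow subgroups, transferring the estimate into $T_i$ via a projection argument. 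You instead apply the contrapositive of Lemma \ref{general_pq} directly to each non-nilpotent $M_i$ (of which Corollary \ref{simple_pq} is just the simple special case), obtaining a ``bad'' prime pair $(p_i,q_i)$ for every factor uniformly, and then pigeonhole on the finitely many available prime pairs rather than on isomorphism types. This buys you a single uniform argument with no case split and no appeal to the external Proposition 3.1 of \cite{dlms}; the pigeonhole on prime pairs is justified because every prime in $\pi(M_i)$ is $\epsilon$-bounded once $|M_i|$ is. Both routes are sound, and yours is arguably tighter in its use of the paper's own Section~6 machinery.
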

\begin{proof} 
Write 
\[ G=T_1\times\dots\times T_s, \] 
where $T_i$ are minimal normal $\al$-invariant subgroups of $G$. 
Note that $T_i=[T_i,\al]$. 
 As each $T_i$ is a  normal  subgroup of $G$,  Hypothesis \ref{00} holds for every $T_i$. 
It follows from   Lemma \ref{semisimple-base} that each $T_i$ has $\ep$-bounded order. Therefore we only need to prove that $s$ is $\ep$-bounded.

Note that only $\ep$-boundedly many groups occur as simple factors of $T_i$ and there are only $\ep$-boundedly many pairwise non-isomorphic groups among the $T_i$. So without loss of generality we can assume that all the $T_i$ are isomorphic to a given semisimple group $T$.

First assume that  $T$ is simple. Let $p$ and $q$ be as in  Corollary \ref{simple_pq}, so that 
 for any $\al$-invariant Sylow $p$-subgroup $\tilde P$ and any $\al$-invariant Sylow $q$-subgroup $\tilde Q$ of $T$ we have that $[\tilde P,\al]$ does not commute with $[\tilde Q,\al]$. It follows from Lemma \ref{DLMS_pq} that $\pr([\tilde P,\al], [\tilde Q,\al]) \le 3/4$.
 Choose $P$, $Q$ as in  Hypothesis \ref{00} and write $P=\prod P_i$, $Q=\prod Q_i$, where the $P_i$ (resp. $Q_i$) are  $\al$-invariant Sylow $p$-subgroups (resp. $q$-subgroups) of $T_i$.
Then, by Lemma \ref{lem:basic} (3),
\[\epsilon\le \pr( [P, \al],[Q, \al])=\prod_{i=1}^s \pr([P_i, \al],[Q_i, \al])\le (3/4)^s.\]
Therefore $s$ is $\ep$-bounded.

So assume that each $T_i$ is not simple. Let
\[ T=S\times S^\al \times \dots\times S^{\al^m} \] 
 where $S$ is a simple group and $m \ge 1$. 
  By Proposition 3.1 of \cite{dlms} there exist two distinct primes $p$ and $q$ such that for every Sylow $p$-subgroup $P_0$
 and every Sylow $q$-subgroup $Q_0$ of $S$, we have $\pr(P_0, Q_0) \le 2/3$. 
 Choose  $P$, $Q$ as in  Hypothesis \ref{00} with respect to these primes $p$ and $q$. 
 Let $P_i = P \cap T_i$ and $Q_i=Q \cap T_i$, so that 
 \[ P= \prod_{i=1}^s P_i \quad \textrm{ and } \quad  Q= \prod_{i=1}^s Q_i.\]  
For a given index $i\in\{1,\dots,s\},$ let $\bar\pi: T_i\to S$ be the projection on the first component of $T_i$ and and set 
 $P_0=P\cap S=\bar\pi(P_i)$ and $Q_0=Q\cap S=\bar\pi(Q_i).$ 
  If $x\in S$ then $\bar\pi([x,\al])=x^{-1}$.
 Therefore $\bar\pi([P_i,\al])=P_0$ and $\bar\pi([Q_i,\al])=Q_0$.
Thus
\[\pr([P_i, \al],[Q_i, \al])\le \pr(P_0, Q_0) \le 2/3.\]

This holds for every $T_i$ and by Lemma \ref{lem:basic} (3) we get 
 \[\epsilon\le \pr( [P, \al],[Q, \al])=\prod_{i=1}^s \pr([P_i, \al],[Q_i, \al])\le (2/3)^s.\]
Therefore $s$ is $\ep$-bounded and the proof is complete. 
\end{proof}

Recall that the generalized Fitting subgroup $F^*(G)$ of a finite group
$G$ is the product of the Fitting subgroup $F(G)$ and all subnormal quasisimple subgroups; here a group is quasisimple if it is perfect and its
quotient by the centre is a nonabelian simple group. 
In every finite group $G$, the centralizer of $F^*(G)$ is contained in $F^*(G)$. We will denote by $R(G)$ the soluble radical of the finite group $G$, i.e. the largest normal soluble subgroup of  $G$.

\begin{lemma}\label{noradical} Under Hypothesis \ref{00}, assume that the soluble radical of $G$ is trivial and  $G=[G,\alpha]$.  
Then $|G|$ is $\ep$-bounded.
\end{lemma}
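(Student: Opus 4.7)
The plan is to reduce the statement to bounding $|I_G(\alpha)|=|G:C_G(\alpha)|$ and then invoke Lemma \ref{easy0}. The bound will be obtained by producing a subgroup $C_G(M)$ of $\epsilon$-bounded index in $G$ that is forced to lie in $C_G(\alpha)$.

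Since $R(G)=1$, one has $F(G)=1$ and consequently $F^*(G)=E(G)$ is the layer, a direct product of nonabelian simple groups (its centre lies in $F(G)=1$); together with $C_G(E(G))\le F^*(G)$ and $Z(E(G))=1$ this forces $C_G(E(G))=1$. Set $M:=[E(G),\alpha]$. Decomposing $E(G)$ along the $\alpha$-orbits of its simple factors, $M$ is the direct product of those orbit factors on which $\alpha$ acts non-trivially, so $E(G)=M\times E_0$ with $E_0=C_{E(G)}(M)$ (using $Z(M)=1$). The subgroup $M$ is normal and $\alpha$-invariant in $G$, inherits Hypothesis \ref{00}, and satisfies $M=[M,\alpha]$ by Lemma \ref{coprime}(i); Lemma \ref{semisimple} then bounds $|M|$ in terms of $\epsilon$.

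The crucial point will be to show that $C_G(M)\le C_G(\alpha)$. Since $E_0=C_{E(G)}(M)$ is defined from the two $C_G(M)$-invariant subgroups $E(G)$ and $M$, it is itself $C_G(M)$-invariant; in particular $E_0\triangleleft C_G(M)$. Now $\alpha$ acts trivially on $E_0$ and coprimely on $C_G(M)$, so Lemma \ref{coprime}(v), applied inside $C_G(M)$, gives $[C_G(M),\alpha]\le C_{C_G(M)}(E_0)$. Together with the tautological inclusion $[C_G(M),\alpha]\le C_G(M)$, this yields
\[ [C_G(M),\alpha]\le C_G(M)\cap C_G(E_0)=C_G(\langle M,E_0\rangle)=C_G(E(G))=1, \]
so $\alpha$ indeed centralizes $C_G(M)$.

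Finally, the conjugation action of $G$ on $M$ embeds $G/C_G(M)$ into $\mathrm{Aut}(M)$, whose order is $\epsilon$-bounded; hence $|I_G(\alpha)|=|G:C_G(\alpha)|\le|G:C_G(M)|$ is $\epsilon$-bounded. Since $G=[G,\alpha]$, Lemma \ref{easy0} delivers the desired bound on $|G|$. The main hurdle is to identify $C_G(M)$ as the right large subgroup and to observe that the two centralizing conditions on $[C_G(M),\alpha]$ collapse it into $C_G(E(G))=1$.
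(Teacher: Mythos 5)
Your argument hinges on the unproved assertion that $M=[E(G),\alpha]$ is normal in $G$, and this is where the proof breaks. The subgroup $[E(G),\alpha]$ is normal in $E(G)\langle\alpha\rangle$, but in general not in $G$: a $G$-conjugate of a simple factor that $\alpha$ moves (or acts nontrivially on) may be a factor that $\alpha$ centralizes. Concretely, take $S=\mathrm{Sz}(8)$, so $3\nmid |S|$, set $E=S_1\times S_2\times S_3\times S_4$ with all $S_i\cong S$, and let $G=E\rtimes V$ where $V\cong C_2\times C_2$ is the regular Klein four-group in $\mathrm{Sym}(4)$ permuting the coordinates. The $3$-cycle $\alpha=(1\,2\,3)$ normalizes $V$ inside $\mathrm{Sym}(4)$ and hence induces a coprime automorphism of $G$ of order $3$; one checks $G=[G,\alpha]$ and $R(G)=1$, so $F^*(G)=E(G)=E$. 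Here $M=[E,\alpha]=S_1\times S_2\times S_3$ and $E_0=S_4$, but $M$ is not normal in $G$, since $V$ is transitive on the four simple factors. Without $M\trianglelefteq G$, the step ``the conjugation action of $G$ on $M$ embeds $G/C_G(M)$ into $\mathrm{Aut}(M)$'' is unavailable: only $N_G(M)/C_G(M)$ embeds in $\mathrm{Aut}(M)$, and $|G:N_G(M)|$ is not controlled. (The deduction $[C_G(M),\alpha]=1$ via Lemma~\ref{coprime}(v) is correct as far as it goes, and the bound on $|M|$ via Lemma~\ref{semisimple} also survives, since $M\trianglelefteq E(G)$ suffices to inherit Hypothesis~\ref{00}; it is only the final index bound that collapses.)

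The paper circumvents exactly this obstacle by passing to the normal closure. After bounding $|[F^*(G),\alpha]|$ by Lemma~\ref{semisimple}, it observes that $\alpha$ moves only boundedly many simple factors, invokes Lemma~2.5 of \cite{AGScoprime} to bound $|G:K|$ where $K$ is the kernel of the action of $G\langle\alpha\rangle$ on the simple factors, and then bounds $N=[F^*(G),\alpha]^G$ as a product of boundedly many conjugates that are all normal in $F^*(G)$. Lemma~\ref{coprime}(v) then forces $F^*(G)=N$, after which $G$ embeds in $\mathrm{Aut}(F^*(G))$. If you want to salvage your route, replace $M$ by $M^G$ and supply a bound on $|M^G|$; this requires the additional permutation-theoretic input that your sketch omits.
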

\begin{proof}  Taking into account that $R(G)=1$ write $F^*(G)=S_1\times\dots\times S_l$, where the factors $S_i$ are simple.   
 The group $G\langle\al\rangle$ acts as a permutation group of the set of simple factors of $F^*(G)$. Let $K$ be the kernel of this action. Note that by Lemma \ref{semisimple} the order of $[F^*(G),\al]$ is $\ep$-bounded. Hence the element $\al$ moves only $\ep$-boundedly many points. It follows from Lemma 2.5 of \cite{AGScoprime} that $G/K$ has $\ep$-bounded order, so there are only $\ep$-boundedly many conjugates of $[F^*(G),\al]$, all of them normalizing each other, being normal in $F^*(G)$. Therefore $N=[F^*(G),\al]^G$ has $\ep$-bounded order. Since $\al$ centralizes $F^*(G)/N$, by Lemma \ref{coprime} $F^*(G)/N$ is central in $G/N$. Taking into account that $F^*(G)$ is semisimple we deduce that $F^*(G)=N$ and so $F^*(G)$ has $\ep$-bounded order. Since $G$ acts on $F^*(G)$ by conjugation and $F^*(G)$ contains its centralizer, the lemma follows.
\end{proof}

We can now prove Theorem \ref{mainA} in the special case when $A$ is cyclic.

\begin{theorem}\label{main} Under Hypothesis \ref{00}, assume that $G=[G,\alpha]$.  Then the index $[G:F_2(G)]$ is $\ep$-bounded.
\end{theorem}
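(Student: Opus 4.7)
Set $B = R(G)$ (the soluble radical) and $K = [B,\alpha]$. The plan is to bound the three quantities $|G/B|$, $|K/F_2(K)|$, and $|G/K|$ separately and then combine them. The first two come directly from Lemma~\ref{noradical} and Lemma~\ref{soluble}; the third, which is the bridge, comes from Lemma~\ref{easy0} after an application of Lemma~\ref{coprime}(v).

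First, the quotient $G/B$ inherits Hypothesis~\ref{00} (via Lemma~\ref{lem:basic}(1)), equals $[G/B,\alpha]$ since $G=[G,\alpha]$, and has trivial soluble radical. Applying Lemma~\ref{noradical} to $G/B$ yields that $|G/B|$ is $\epsilon$-bounded. Second, $K$ is soluble, $\alpha$-invariant, and satisfies $K=[K,\alpha]$ by Lemma~\ref{coprime}(i); $K$ inherits Hypothesis~\ref{00} by intersecting the $\alpha$-invariant Sylow subgroups of $G$ given by the hypothesis with $K$ and applying Lemma~\ref{lem:basic}(2). Therefore Lemma~\ref{soluble} applied to $K$ yields that $|K/F_2(K)|$ is $\epsilon$-bounded.

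The crucial step is to bound $|G/K|$. Since $[B,\alpha]=K$, the automorphism $\alpha$ acts trivially on $B/K$, so Lemma~\ref{coprime}(v) applied to the quotient $G/K$ shows that $[G/K,\alpha]$ centralizes $B/K$; combined with $[G/K,\alpha]=G/K$ (from $G=[G,\alpha]$), this gives $B/K\leq Z(G/K)$. Setting $\bar G=G/K$, the triviality of the $\alpha$-action on $B/K$ also gives $B/K\leq C_{\bar G}(\alpha)$, whence
\[
|I_{\bar G}(\alpha)| = [\bar G : C_{\bar G}(\alpha)] \leq [\bar G : B/K] = |G/B|,
\]
which is $\epsilon$-bounded. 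Since $\bar G=[\bar G,\alpha]$, Lemma~\ref{easy0} yields that $|G/K|=|\bar G|$ is $\epsilon$-bounded.

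Combining, $|G/F_2(K)|=|G/K|\cdot|K/F_2(K)|$ is $\epsilon$-bounded. Since $F_2(K)$ is metanilpotent and characteristic in the normal subgroup $K$, a routine Fitting-series argument (lifting $F(K)$ into $F(G)$ and observing that $F_2(K)F(G)/F(G)$ is nilpotent, hence contained in $F_2(G)/F(G)$) gives $F_2(K)\leq F_2(G)$, and therefore $|G/F_2(G)|\leq|G/F_2(K)|$ is $\epsilon$-bounded, completing the proof. The main subtlety I expect is recognizing that Lemma~\ref{easy0}, via the centrality of $B/K$ in $G/K$, yields a bound on $|G/K|$ itself rather than merely on $(G/K)'$ via Schur-type reasoning; it is this stronger conclusion that allows one to conclude bounded index in $F_2(G)$ rather than only in $F_3(G)$.
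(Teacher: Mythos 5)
Your plan closely mirrors the paper's own argument (the same decomposition via $R(G)$ and $[R(G),\alpha]$, and the same invocation of Lemmas~\ref{noradical}, \ref{soluble} and \ref{easy0}), but there is a genuine gap in the crucial step: the subgroup $K=[R(G),\alpha]$ need not be normal in $G$. It is normal in $R(G)$ and $\alpha$-invariant, hence subnormal in $G$, but for $g\notin C_G(\alpha)$ one computes $(r^{-1}r^\alpha)^g=[r^g,\alpha]\cdot[(r^g)^\alpha,\,c]$ with $c=(g^{-1}g^\alpha)^{-1}$, and the second factor lies in $[R(G),G]$ rather than in $[R(G),\alpha]$. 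So ``the quotient group $G/K$'' does not exist in general, and neither Lemma~\ref{coprime}(v) nor Lemma~\ref{easy0} can be applied to it as you do. (The same slip appears at the end, where you call $K$ ``the normal subgroup''; only subnormality is available, though that does suffice for $F_2(K)\le F_2(G)$.) That $K$ is not normal is precisely why the paper's proof does not argue as you do.

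The paper repairs exactly this point by passing to the normal closure $N=[R,\alpha]^G$. From Lemma~\ref{soluble} together with $F_2([R,\alpha])\le F_2(G)$ one first gets that $[R,\alpha]F_2(G)/F_2(G)$ has $\epsilon$-bounded order. Since $|G:R|$ is $\epsilon$-bounded by Lemma~\ref{noradical} and $R\le N_G([R,\alpha])$, there are only $\epsilon$-boundedly many $G$-conjugates of $[R,\alpha]$; each is normal in $R$, so they normalize one another, and hence $NF_2(G)/F_2(G)$ is a product of $\epsilon$-boundedly many factors of $\epsilon$-bounded order, so is itself $\epsilon$-bounded. Now $N$ \emph{is} normal in $G$, $\alpha$ centralizes $R/N$ because $[R,\alpha]\le N$, and $|G:R|$ is bounded, so $|I_{G/N}(\alpha)|$ is bounded and Lemma~\ref{easy0} applied to $G/N$ gives $|G/N|$ bounded; combining with the bound on $NF_2(G)/F_2(G)$ finishes the proof. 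In short, your outline becomes correct once $K$ is replaced by its normal closure, but the ``boundedly many mutually normalizing conjugates'' argument needed to control $K^G$ modulo $F_2(G)$ is the nontrivial step that is missing from your proposal.
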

\begin{proof} Let $R=R(G)$ be the soluble radical of $G$. By Lemma \ref{soluble} the index $|[R,\al]:F_2([R,\al])|$ is $\ep$-bounded. As $F_2([R,\al])$ is subnormal in $G$ and therefore contained in $F_2(G)$, we deduce that 
  the order of $[R,\al]$ modulo $F_2(G)$ is $\ep$-bounded. We know from Lemma \ref{noradical} that the index $|G:R|$ is $\ep$-bounded. As $R$ normalizes $[R,\al]$, there are $\ep$-boundedly many $G$-conjugates of $[R,\al]$ and they normalize each other, being all normal in $R$. Therefore the order of $[R,\al]^G$ modulo $F_2(G)$ is $\ep$-bounded.
Since  $\al$ centralizes $R/[R,\al]^G$, Lemma \ref{easy0} now implies that $R/F_2(G)$ has $\ep$-bounded order, and the result follows.
\end{proof}

\section{The general case}

To extend  Theorem \ref{main} from the case of one automorphism $\al$ to the case where a coprime group of automorphisms $A$ acts on $G$, we need the following lemma. 
 
\begin{lemma}\label{lemmaA} Let $A$ be a coprime group of automorphisms of a finite group $G$ and assume that $|[G,a]|\leq m$ for every $a\in A$. Then $|[G,A]|$ is $m$-bounded.
\end{lemma}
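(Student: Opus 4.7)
The plan is to induct on $|G|$ after the standard reduction to $G=[G,A]$: since $[G,A]$ is $A$-invariant with $[[G,A],A]=[G,A]$ (Lemma~\ref{coprime}(i)) and $|[[G,A],a]|\le|[G,a]|\le m$, we may replace $G$ by $[G,A]$. The hypothesis then reads $|G:C_G(a)|\le m$ for each $a\in A$, via $|I_G(a)|=|G:C_G(a)|$ and $I_G(a)\subseteq[G,a]$, and the goal becomes bounding $|G|$ itself.

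For the inductive step, pick a minimal $A$-invariant normal subgroup $N\le G$. The quotient $G/N$ inherits the hypothesis and satisfies $G/N=[G/N,A]$, so by induction $|G/N|$ is $m$-bounded; it remains to bound $|N|$. Since $[N,A]$ is characteristic in $N$ and hence normal in $G$, minimality forces either $[N,A]=N$ or $[N,A]=1$. In the latter case Lemma~\ref{coprime}(v) places $N\le Z(G)$ and minimality forces $N$ cyclic of prime order $p$, with $p\le m$ following from a Sylow argument: if $p>m$ then for any $A$-invariant Sylow $p$-subgroup $P$ the group $[P,a]\le P$ has order a power of $p$ bounded by $m<p$, hence trivial, so $A$ acts trivially on $P$ and therefore on $O_p(G)\supseteq N$, whence $O_p(G)\le Z(G)$ by Lemma~\ref{coprime}(v); combining with the fact that $G=[G,A]$ is generated by $p'$-elements (forcing $G=O^p(G)$ and $p\mid|G'|$) then yields a contradiction via a descent on the derived subgroup.

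In the case $[N,A]=N$, characteristic simplicity forces $N$ to be either elementary abelian with $C_N(A)=1$, or a direct product of isomorphic non-abelian simple groups (the latter handled using the classification-based fact that coprime actions on such groups are tightly constrained, combined with $|[N,a]|\le m$). In the abelian case, view $N$ as a semisimple $\mathbb F_p[A]$-module via Maschke (using coprime action), and decompose $N=\bigoplus_j V_j^{n_j}$ into non-isomorphic simple summands with each $V_j^A=0$. Picking $a_j\in A$ with $[V_j,a_j]=V_j$ gives $n_j\dim V_j\le\log_p m$. An averaging argument bounds the number of summand types $s$: the kernels $K_j=\ker(A\to\mathrm{Aut}\,V_j)$ satisfy $|A\setminus K_j|\ge|A|/2$, so pigeonhole produces some $a\in A$ with $[V_j,a]\ne 0$ for at least $s/2$ indices $j$, yielding $s\le 2\log_p m$ against $\dim[N,a]\le\log_p m$. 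With $p\le m$, this bounds $|N|$.

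The main obstacle is the averaging step: without an a~priori bound on $|A|$, one must exploit the hypothesis $|[N,a]|\le m$ \emph{simultaneously} for every $a\in A$ (rather than over a generating set) to bound the number of isotypic components by a function of $m$ alone. Diagonal-torus examples show that an abstract family of index-bounded centralizers with trivial intersection need not imply any bound on the ambient group, so it is precisely the group structure of $A$ — and the uniform bound over all of $A$ — that is essential here.
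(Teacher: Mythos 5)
Your approach---induction on $|G|$ through a minimal $A$-invariant normal subgroup $N$---is genuinely different from the paper's, which instead notes $[G,A]=\prod_{\alpha\in A}[G,\alpha]$ so that it suffices to bound $|A|$, reduces to $A$ abelian via \cite[Theorem 5.2]{AI}, observes that each $a\in A$ has conjugacy class of size at most $m$ in the semidirect product $GA$, and invokes the stronger BFC theorem \cite[Theorem 1.1]{cri} to bound $\langle A^G\rangle'\supseteq[G,A,A]=[G,A]$ in one stroke.

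The basic difficulty with your scheme is that the induction does not close. If the goal is ``$|G|\le f(m)$'' for some fixed $f$, the inductive step yields only $|G/N|\le f(m)$ together with $|N|\le g(m)$, hence $|G|\le f(m)g(m)$, which is strictly worse than the target bound. An induction of this shape can be rescued only after the number of iterations is itself bounded in terms of $m$ (this is exactly what the paper does in Lemma~\ref{fitt}, bounding the Fitting height via Kaluzhnin and Hall--Higman before inducting on it), and your proposal never takes that step. Two of the three cases are also incomplete: the argument for $p\le m$ when $[N,A]=1$ implicitly uses ``$A$ trivial on an $A$-invariant Sylow $p$-subgroup implies $p\nmid|G|$,'' which is Lemma~\ref{easy1} and holds only for soluble $G$ (see the $\PSL_2(q)$ example immediately following it); and the case where $N$ is a product of non-abelian simple groups is only gestured at (one needs to bound both the order of a simple factor and the number of factors, the latter via a permutation argument as in the paper's Lemma~\ref{noradical}). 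Ironically, the one step you flag as the ``main obstacle''---the pigeonhole over all $a\in A$ using $|A:K_j|\ge2$ to find a single $a$ acting nontrivially on at least $s/2$ of the isotypic components---is in fact sound and requires no prior bound on $|A|$; the real gaps are elsewhere.
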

\begin{proof} 
 Note that, as $ [G,\al]$ is normal in $G$ for all $\al \in A$, 
\[ [G,A]=\prod_{\al \in A} [G,\al].\] 
 So, it is sufficient to show that $|A|$ is $m$-bounded. Since the order of $A$ is bounded in terms of the orders of its abelian subgroups (see
for instance \cite[Theorem 5.2]{AI}), we can assume that $A$ is abelian. Set $K=GA$. Then $|a^K|\leq m$ for every $a\in A$. 
 
 It follows from Theorem 1.1 of \cite{cri} that  the derived subgroup of $\langle A^G \rangle$ has  $m$-bounded order. 
 Then $[G, A]=[G,A,A] \le [A^G, A]$ has  $m$-bounded order as well. As $A$ acts on $[G,A]$, we deduce that the index of $C_A([G,A])$ in $A$ is $m$-bounded.
  Note that if $a \in   C_A([G,A])$, then $  [G,a,a]=[G,a]=1$, hence $a=1$. It follows that the order of $A$ is $m$-bounded.
\end{proof}

Now we are ready to prove Theorem \ref{mainA}: 
\begin{proof}[Proof of Theorem \ref{mainA}]
Let $\ep>0$ and let $G$ be a finite group admitting a coprime automorphism group  $A$ such that   $G=[G,A]$ and for any distinct primes $p,q\in\pi(G)$ there are $A$-invariant Sylow $p$-subgroup $P$ and Sylow $q$-subgroup $Q$ in $G$ for which $\pr([P,A],[Q,A])\geq\ep$. 
 We want to prove that  $[G:F_2(G)]$ is $\ep$-bounded. 
 
 Recall that $G=[G,A]=\prod_{\al \in A} [G,\al].$
 Now we will show that for any $\al \in A$, the subgroup  $ [G,\al]$ satisfies the assumptions of Theorem  \ref{main}. Let $p,q\in\pi(G)$ be distinct primes and let 
$P$ and $Q$ be $A$-invariant Sylow $p$ and $q$ subgroups such that  $\pr([P,A],[Q,A])\geq\ep$; then $\pr([P,\al],[Q,\al])\geq\ep$. 
Set $\tilde P=P\cap[G,\al]$ and $\tilde Q=Q\cap[G,\al]$. Observe that $\tilde P$ and $\tilde Q$ are $\al$-invariant Sylow subgroups of $[G,\al]$.
We have 
\[  \pr([\tilde P,\al],[\tilde Q,\al]) \geq \pr([P,\al],[Q,\al])\geq\ep.\]
By Theorem \ref{main} we deduce that the index of $F_2( [G,\al])$ in $ [G,\al]$ is $\ep$-bounded. Note that $F_2( [G,\al])$ is characteristic in $ [G,\al]$, which is normal in $G$, therefore $F_2( [G,\al])\le F_2(G)$.

Now we factor out $F_2(G)$ and we get that $ [G,\al]$ has $\ep$-bounded order for every $\al \in A$. 
It follows from Lemma \ref{lemmaA} that the order of 
$ [G, A]=G$ is $\ep$-bounded. This concludes the proof.
\end{proof}

\section{Theorems \ref{Op} and \ref{p_fitt}} 

This section is devoted to the proofs of Theorem \ref{Op} and Theorem \ref{p_fitt}. We start by studying the case when $G$ is a finite simple group.

\begin{proposition} \label{Op-simple} Let $G$ be a finite simple group of Lie type in characteristic $p$ admitting a coprime automorphism $\alpha$. Let $P$ be an $\alpha$-invariant Sylow $p$-subgroup of $G$ and assume that
 \[\Pr([P, \alpha], [P, \alpha]^x) \geq \epsilon\]
  for all $x \in G$. Then the order of $[P, \alpha]$  is $\epsilon$-bounded. 
\end{proposition}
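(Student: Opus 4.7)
My plan is to show that $|[P,\al]|$ is $\epsilon$-bounded; combined with Lemma~\ref{simple-order} this immediately forces $|G|$ to be $\epsilon$-bounded. The key tools are the regular unipotent element produced by Lemma~\ref{regular} and the structure theorem for commuting probability of Lemma~\ref{bound_m}, applied to a carefully chosen conjugate.

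First I would pick $x\in C_G(\al)=L(q_0)$ to be a lift of the longest Weyl element $w_0$; such a lift exists because $L(q_0)$ has the same Weyl group as $G=L(q)$, and the Bruhat decomposition gives $P\cap P^x = U\cap U^{w_0}=1$ (opposite Borels meet trivially in their unipotent radicals). Because $x$ centralises $\al$, conjugation by $x$ commutes with the $\al$-action, so $[P,\al]^x=[P^x,\al]$ is again $\al$-invariant. Applying Lemma~\ref{bound_m} to the hypothesis $\pr([P,\al],[P^x,\al])\ge\epsilon$ with $A=\langle\al\rangle$ then yields an $\al$-invariant normal subgroup $H_0\le[P,\al]$ of $\epsilon$-bounded index $m$ such that $|[P^x,\al]:C_{[P^x,\al]}(h)|\le m$ for every $h\in H_0$.

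The decisive step is to locate a regular unipotent element $h\in H_0$. Once one is found, Lemma~\ref{regular} gives $C_G(h)\le P$, hence $C_{[P^x,\al]}(h)\le [P^x,\al]\cap P\le P^x\cap P=1$, and the bound from Lemma~\ref{bound_m} forces $|[P^x,\al]|=|[P,\al]|\le m$, as required. To produce such an $h$ I would exploit the fact that, by Remark~\ref{comm} and the Chevalley commutator formulas, the image of $[P,\al]$ in the abelianization $U/[U,U]$ is an $\F_{q_0}$-space isomorphic to $(\F_{q_0}^{\,e-1})^{\ell}$, in which the regular elements are precisely the tuples with all $\ell$ simple-root coordinates nonzero; their density is therefore $(1-q_0^{-(e-1)})^{\ell}$. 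Whenever this density exceeds $1-1/m$ (which happens roughly when $\ell$ is small compared to $q_0^{\,e-1}$), a direct pigeonhole argument places a regular unipotent inside any subgroup of $[P,\al]$ of index at most $m$, in particular inside $H_0$.

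The principal obstacle I anticipate is the complementary range, where $\ell$ is large compared to $q_0^{\,e-1}$ so the density of regulars becomes small and naive pigeonhole no longer suffices. Here I would lean harder on $\al$-equivariance: since $[P,\al]=[[P,\al],\al]$, the quotient $[P,\al]/H_0$ is an $\al$-invariant $p$-group of order at most $m$ on which $\al$ acts fixed-point-freely. For the explicit regular unipotent $y=\prod_{r\in\Pi}x_r(t^{\al}-t)$ of Lemma~\ref{regular}, either $\bar y=1$ (so $y\in H_0$ and we are done) or the $\al$-orbit of $\bar y$ in $[P,\al]/H_0$ consists of $|\al|=e$ distinct cosets, forcing $e\le m$. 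With $e$ thus bounded, I would vary the parameter $t\in\F_q\setminus\F_{q_0}$ in the construction of $y$ to produce a large $\al$-invariant family of regular unipotents and combine their distribution among cosets of $H_0$ with the coprime action structure to exhibit a regular element in $H_0$, closing the argument.
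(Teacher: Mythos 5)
Your overall strategy—conjugate by a lift $x\in C_G(\al)$ of the longest Weyl element so that $P\cap P^x=1$, apply Lemma~\ref{bound_m}, and feed a regular unipotent from Lemma~\ref{regular} into the resulting centralizer bound—is the same as the paper's, and the reduction at the end (a regular $h\in H_0$ with $C_G(h)\le P$ forces $C_{[P^x,\al]}(h)\le P\cap P^x=1$, hence $|[P,\al]|\le m$) is correct. The issue is entirely in how you get the regular unipotent to land inside the bounded-index subgroup $H_0$, and that is precisely where your argument breaks down.

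The paper first splits on the rank: if the Lie rank is at most $m$, it simply quotes \cite[Theorem 1.3]{dgms}. If the rank exceeds $m$, then $G$ contains $\mathrm{Alt}(m)$, so $|G|$ is divisible by every prime up to $m$, and coprimality of $\al$ forces \emph{every} prime divisor of $|\al|$ to exceed $m$. This is the decisive observation you are missing. Once you have it, the quotient $[P,\al]/H_0$ has order at most $m$, its automorphism group has order with all prime factors $\le m$, so $\al$ must act trivially on it; then $[P,\al]=[[P,\al],\al]\le H_0$, i.e., $H_0=[P,\al]$, and the regular unipotent of Lemma~\ref{regular} is automatically inside $H_0$. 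There is no need for any density estimate or case analysis on the size of $q_0$.

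Your replacement for this step has two concrete gaps. First, your density calculation $(1-q_0^{-(e-1)})^{\ell}>1-1/m$ only covers the regime $\ell\lesssim q_0^{e-1}/m$; this is a genuinely different cut from the paper's ``rank $\le m$ versus rank $>m$'' split and leaves a nontrivial range uncovered (e.g., $q_0=2$, $e=3$, rank between roughly $4m$ and whatever the small-rank theorem handles). Second, your attempt to close the complementary range is not sound: the claim that $\al$ acts \emph{fixed-point-freely} on $[P,\al]/H_0$ does not follow from $[P,\al]=[[P,\al],\al]$ (having $Q=[Q,\al]$ with $\al$ coprime does not imply $C_Q(\al)=1$; cf.\ the $\mathrm{PSL}_2(q)$ example in the paper, and even for $p$-groups one only gets $C_Q(\al)\le\Phi(Q)$), and the assertion that the $\al$-orbit of $\bar y$ has size $1$ or $e$ ignores proper divisors of $e$ and does not force $\bar y=1$ even when the orbit is a singleton. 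So the pigeonhole plus ``vary $t$'' plan does not yield a proof; you need the primality-of-$|\al|$ argument (or the external small-rank theorem) to handle the large-rank case cleanly.
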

\begin{proof} 
Let $G = L(q)$  and $C_G(\al)=L(q_0)$ where  $q = q_0^e$, with $e=|\al|$. 
We can assume $\al \neq 1$. 
Let $m$ be the $\ep$-bounded integer in Lemma \ref{bound_m}, which we may assume to be bigger than $8$.

 If the rank $r$ of $G$ is at most $m,$ then by virtue of \cite[Theorem 1.3]{dgms} the order of $[P, \alpha]$ is $\epsilon$-bounded.
 
 So assume $r>m$. Then  $G$ contains the alternating group ${\mathrm{Alt} }(m)$ (see the proof of Lemma 6.4 of \cite{AGKS}) and therefore the order of $G$ is divisible by all primes less or equal than $m$. Since $(|\al|, |G|)=1$, we deduce that every prime divisor of the order of $\al$ is bigger than $m$. 
 
We know that $\al$ normalizes a Borel subgroup $B = UT,$ where $T$ is a
torus and $U$ is a Sylow $p$-subgroup. Then $B^-=U^-T$ is the
opposite Borel subgroup (with $U\cap U^-)=1$). This is obtained by conjugating $B$ by
the longest element $n$ in the Weyl group. We can assume that $P=U$. Let $x\in C_G(\al)$ be an element corresponding to $n$.
Note that, in particular, $P\cap P^x=1$. 

By Lemma  \ref{bound_m},  $ [P,\al]^x$ contains an $\al$-invariant normal subgroup $P_0$ 
 such that $|[P,\al]^x/P_0| \le m$ and $|[P,\al]: C_{[P,\al]}(g)| \le m$ for every $g \in P_0$.
 Note that $[P,\al]^x=[P^x,\al]$.
Since the order of $\al$ is divisible only by primes bigger than $m$, it acts trivially on the quotient group $[P^x,\al]/P_0$.
Therefore $[P^x,\al]=[P^x,\al,\al]\le P_0$, and hence $P_0=[P^x,\al]$. 

It follows from Lemma \ref{regular} that $[P^x,\al]$ contains a regular element $y$ with  $C_{G}(y)\le P^x$, so 
 \[ C_{[P,\al]}(y)\le P\cap P^x=1. \] 
  Since  $|[P,\al]: C_{[P,\al]}(y)| \le m$, we deduce that 
  $[P,\al]$ has $\ep$-bounded order, as claimed.
\end{proof}

We are now ready to prove Theorem \ref{Op}, which we restate here for convenience
\begin{thmOp}  
Let $G$ be a finite group admitting a group of coprime automorphisms $A$. Let $P$ be an $A$-invariant Sylow $p$-subgroup of $G$ and assume that \[\Pr([P,A], [P,A]^x) \geq \epsilon\] for all $x \in G$. Then the order of $[P,A]$ modulo $O_p(G)$ is $\epsilon$-bounded. 
\end{thmOp}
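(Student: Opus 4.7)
The plan proceeds in three stages: a reduction to cyclic $A$, a reduction to $O_p(G)=1$, and an analysis of the remaining case via the generalized Fitting subgroup and Proposition \ref{Op-simple}.

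First I would reduce to cyclic $A$. For every $\alpha\in A$, since $[P,\alpha]\le [P,A]$ and $[P,\alpha]^x\le[P,A]^x$, two applications of Lemma \ref{lem:basic}(2) combined with the symmetry $\pr(X,Y)=\pr(Y,X)$ give
\[\pr([P,\alpha],[P,\alpha]^x)\ge\pr([P,A],[P,A]^x)\ge\epsilon\]
for every $x\in G$. Granted the cyclic case, this yields an $\epsilon$-bounded integer $m$ such that $|[P,\alpha]O_p(G)/O_p(G)|\le m$ for every $\alpha\in A$. Since $A$ acts coprimely on the $p$-group $P/O_p(G)$ and $[P,A]O_p(G)/O_p(G)=[P/O_p(G),A]$, Lemma \ref{lemmaA} yields the desired $m$-bound. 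I would then reduce to the case $O_p(G)=1$ by passing to $\bar G=G/O_p(G)$: the hypothesis is preserved by Lemma \ref{lem:basic}(1), the quotient satisfies $O_p(\bar G)=1$, and the conclusion for $\bar G$ is exactly what is needed. So it suffices to show $|[P,\alpha]|$ is $\epsilon$-bounded when $A=\langle\alpha\rangle$ and $O_p(G)=1$.

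With $O_p(G)=1$ the Fitting subgroup $F(G)$ is a $p'$-group, and $Z(E(G))$ is $p'$ as well (its $p$-part would be $G$-invariant and therefore contained in $O_p(G)=1$). Hence the Sylow $p$-subgroups of $F^*(G)=F(G)E(G)$ lie inside $E(G)$, and since $C_G(F^*(G))\le F^*(G)$ it is enough to bound $|[P,\alpha]\cap E(G)|$ together with the image of the $p$-group $[P,\alpha]$ in $\mathrm{Out}(F^*(G))$. For the first quantity, I would decompose $E(G)$ into $\alpha$-orbits of components $L_1,\dots,L_k$: on each $\alpha$-invariant component $L$ with nontrivial $\alpha$-action, a local version of the probabilistic hypothesis obtained by restricting $x$ to lie in $L$ feeds into Proposition \ref{Op-simple} and bounds $|[P\cap L,\alpha]|$; orbits of length $s>1$ are handled by the wreath-product projection used in the proof of Lemma \ref{semisimple-base}, applying Proposition \ref{Op-simple} to the coprime automorphism $\alpha^s$ on a chosen component $L_1$ to bound $|P\cap L_1|$ and hence $|[P\cap M,\alpha]|$ for the orbit product $M$. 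The image in $\mathrm{Out}(F^*(G))$ is then controlled by the bounded structure of the components.

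The main obstacle will lie in the component-wise analysis: the hypothesis $\pr([P,\alpha],[P,\alpha]^x)\ge\epsilon$ is global across $G$, and one must extract from it a usable probabilistic lower bound on $\pr([P\cap L,\alpha^s],[P\cap L,\alpha^s]^y)$ as $y$ ranges over $L$, uniform in $\epsilon$, to invoke Proposition \ref{Op-simple}. This will require carefully chosen test elements $x$ localized in individual components (or orbit products) and a precise accounting of how the off-component part of $[P,\alpha]$ affects the commuting count, which is where the bulk of the technical work will lie.
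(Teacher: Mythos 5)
Your two initial reductions---to cyclic $A$ via Lemma~\ref{lem:basic}(2) and Lemma~\ref{lemmaA}, then to $O_p(G)=1$---coincide with the paper's. The divergence is in the residual case, and there the paper does something different and shorter: it does not try to bound $|[P,\al]|$ by hand via $F^*(G)$. Instead, it lets $r$ be the largest rank of a simple Lie-type composition factor of $G$ in characteristic $p$, shows that $r$ is $\epsilon$-bounded, and then cites~\cite[Theorem 1.3]{dgms} to finish. The rank bound is obtained by passing to a minimal $\al$-invariant normal subgroup that is a product of rank-$r$ Lie-type simple groups, picking $N_0=S^{\langle\al\rangle}$ inside it, and splitting into two cases.

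You have identified the wrong obstacle, and you are missing the idea that makes the hard case work. The localization you worry about is not the difficulty: since $[P\cap L,\al^s]\le[P,\al]$ (because $g^{-1}g^{\al^s}$ is a product of elements $h^{-1}h^\al$ with $h\in P$), two applications of Lemma~\ref{lem:basic}(2) give $\Pr([P\cap L,\al^s],[P\cap L,\al^s]^y)\ge\epsilon$ for all $y$, exactly as you want, with no loss in $\epsilon$. The genuine gap is in your treatment of $\al$-orbits of components of length $s>1$. You propose to ``apply Proposition~\ref{Op-simple} to $\al^s$ on $L_1$ to bound $|P\cap L_1|$,'' but Proposition~\ref{Op-simple} bounds only $|[P\cap L_1,\al^s]|$, and the passage to $|P\cap L_1|$ goes through Lemma~\ref{simple-order}, which requires $\al^s$ to act \emph{nontrivially} on $L_1$. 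When $|\al|$ divides $s$ (so $\al^s=1$), which is entirely possible and is not excluded by coprimality, this chain collapses: $[P\cap L_1,\al^s]=1$ tells you nothing. The paper handles precisely this configuration with a different mechanism: in the case $N_0=S\times S^\al\times\cdots\times S^{\al^{c-1}}$ with $c>1$, for $x\in\tilde P$ the element $x^{-1}x^\al\in[P\cap N_0,\al]$ has the \emph{same order} as $x$, so the exponent of $\tilde P$ equals the exponent of $[P\cap N_0,\al]$, which is $\epsilon$-bounded by Lemma~2.11 of~\cite{dgms}; since a classical group of large rank $r$ contains $\mathrm{Alt}(r)$ and hence its Sylow $p$-subgroup has large exponent, $r$ is forced to be $\epsilon$-bounded. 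This exponent argument is the missing ingredient in your plan.

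Two further points. First, your claim that ``the image in $\mathrm{Out}(F^*(G))$ is then controlled by the bounded structure of the components'' is not substantiated and is nontrivial: $\mathrm{Out}(F^*(G))$ contains the permutation action on a possibly unbounded number of components, and you would need a separate argument, e.g.\ the one in Lemma~\ref{noradical} using Lemma~2.5 of~\cite{AGScoprime}, to control it. Second, by taking the $F^*(G)$ route you are implicitly trying to reprove a piece of~\cite[Theorem 1.3]{dgms} rather than invoke it, which is why your version has to do strictly more work; the paper's rank-reduction is precisely designed to delegate that work to the cited theorem.
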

\begin{proof}  
We may assume that $O_p(G)=1$. Let $r$ be the maximum integer such that $G$ has a composition factor which is isomorphic to a simple group of Lie type of rank $r$ in characteristic $p$. We will show that $r$ is $\epsilon$-bounded. Then the theorem will be immediate from  \cite[Theorem 1.3]{dgms}.  
 We can assume $r >8$. 
 
First, we deal with the case where $A=\langle\al\rangle$ is cyclic.
 
Let $M$ be the characteristic subgroup of $G$ generated by all normal subgroups containing no composition factor which is isomorphic to a simple group of Lie type of rank $r$ in characteristic $p$. Note that the maximum integer $s$ such that $G/M$ has a composition factor which is isomorphic to a simple group of Lie type of rank $s$ in characteristic $p$ is precisely $r$. So we can pass to the quotient $G/M$ and without loss of generality assume that $M=1$. 

Then $G$ has at least one minimal $\al$-invariant normal subgroup which is a direct product of simple groups of Lie type of rank $r$ in characteristic $p$. Let $N$ be any such subgroup. 
 Then $N$ contains an $\al$-invariant normal subgroup  $ N_0=S^{\langle \al \rangle}$, for some simple group $S$ of Lie type of rank $r$ in characteristic $p$. 
Note that $P\cap N_0 $ is an $\alpha$-invariant Sylow $p$-subgroup of $N_0$ with 
 \[\Pr([P\cap N_0, \alpha], [P\cap N_0, \alpha]^x) \geq \epsilon\] for all $x \in N_0$. 
 
 If $N_0=S$, then the rank $r$ is $\ep$-bounded by Proposition \ref{Op-simple} and Lemma \ref{simple-order}. 
 
Otherwise 
 \[ N_0=S\times S^\alpha\times\cdots\times S^{\alpha^{c-1}}\]
   and an $\alpha$-invariant Sylow $p$-subgroup $P\cap  N_0$ of $ N_0$ is of the form $\tilde P\times \tilde P^\alpha\times\cdots\times \tilde P^{\alpha^{c-1}}$, where $\tilde P$ is an $\al^c$-invariant Sylow $p$-subgroup of $S$. Note that, if $x\in \tilde P$, then $x^{-1}x^\al\in [P\cap  N_0, \alpha]$ has the same order as $x$. 
 Therefore the exponent of  $\tilde P$ is the same as the exponent of  $[P\cap  N_0, \alpha],$ which is $\ep$-bounded by  Lemma 2.11 of \cite{dgms}. Note that  every classical group of Lie rank $r>8$
     contains the alternating group ${\mathrm{Alt}}(r)$ of degree $r$ (see the proof of Lemma 6.4 of \cite{AGKS}); as the exponent of the Sylow $p$-subgroup of  ${\mathrm{Alt}}(r)$ goes to infinity as $r$ does, it follows that the rank $r$ of $S$ is bounded in terms of the exponent of  $\tilde P$. This implies that $r$ is $\ep$-bounded.
     
Thus, the theorem holds in the case where $A$ is cyclic. The general case is straightforward from Lemma \ref{lemmaA} applied to the action of $A$ on $P$. \end{proof}

\begin{lemma}\label{product} Assume that the finite group $G$ is the product of $k$ normal subgroups $A_1,\dots, A_k$ such that
$\pr(A_i,A_j)\ge\epsilon$ for all $i,j$. Then $G$ has a normal subgroup $D$ of $(\ep,k)$-bounded index such that the order of $D'$ is $(\ep,k)$-bounded.
\end{lemma}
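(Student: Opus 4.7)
The plan is to apply Lemma~\ref{bound_m} to each pair $(A_i,A_j)$, taking the acting group in that lemma to be $G$ itself acting by conjugation. This is legitimate because each $A_i$, being normal in $G$, is $G$-invariant. From the hypothesis $\pr(A_i,A_j)\ge\ep$ one obtains an $\ep$-bounded integer $m$ and a subgroup $B_{ij}\le A_i$ that is normal in $G$ (being a $G$-invariant normal subgroup of $A_i$), of $\ep$-bounded index in $A_i$, and satisfying $|A_j:C_{A_j}(x)|\le m$ for every $x\in B_{ij}$. Setting $B_i=\bigcap_{j=1}^k B_{ij}$ (including $j=i$) yields a $G$-normal subgroup of $A_i$ of $(\ep,k)$-bounded index such that $|A_j:C_{A_j}(x)|\le m$ for every $x\in B_i$ and every $j$.

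Define $D=B_1B_2\cdots B_k$. As a product of normal subgroups, $D$ is normal in $G$, and since $G=A_1A_2\cdots A_k$ one has
\[
|G:D|\le\prod_{i=1}^k|A_i:B_i|,
\]
which is $(\ep,k)$-bounded. Next I argue that every element of $D$ has a $G$-conjugacy class of $(\ep,k)$-bounded size. Fix $x\in B_i$. Since $B_i$ is normal in $G$, every $G$-conjugate of $x$ again lies in $B_i$; hence $|A_j:C_{A_j}(y)|\le m$ for every $y\in x^G$ and every $j$. Writing an arbitrary $g\in G$ as $g=a_1\cdots a_k$ with $a_j\in A_j$ and conjugating stepwise, one obtains $|x^G|\le m^k$. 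For an arbitrary $d=b_1b_2\cdots b_k\in D$ with $b_i\in B_i$, the identity $d^g=b_1^g\cdots b_k^g$ gives $|d^G|\le\prod_{i=1}^k|b_i^G|\le m^{k^2}$.

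Since $|d^D|\le|d^G|\le m^{k^2}$ for every $d\in D$, the BFC theorem of B.\,H.\,Neumann, in the quantitative form employed through \cite[Theorem 1.1]{cri}, yields that $|D'|$ is $(\ep,k)$-bounded, completing the proof. The delicate point of the argument is the first step: securing that each $B_{ij}$ is normal in the whole of $G$, not merely in $A_i$. This normality is what allows the bounds on centralizer indices to persist under conjugation and thereby stack multiplicatively across the $k$ factors, and it is obtained precisely by invoking Lemma~\ref{bound_m} with $A=G$ acting by conjugation.
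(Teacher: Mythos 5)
Your proof is correct and follows essentially the same route as the paper's: apply Lemma~\ref{bound_m} with $A=G$ acting by conjugation to each pair $(A_i,A_j)$ to obtain $G$-normal subgroups $B_{ij}\le A_i$, intersect over $j$ to form $B_i$, take $D=B_1\cdots B_k$, observe that every element of $D$ has $(\ep,k)$-boundedly many $G$-conjugates because $G=A_1\cdots A_k$, and finish with the quantitative BFC theorem. The paper cites Neumann's original BFC result \cite{neu} (with \cite{GuMa,wie}) for the last step while you cite \cite[Theorem 1.1]{cri}, but this is an immaterial difference; the structure and the key observation (securing $G$-normality of the $B_{ij}$ by invoking Lemma~\ref{bound_m} with the conjugation action) match the paper exactly.
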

\begin{proof}
For any pair $i,j$, by virtue of Lemma \ref{bound_m}, $G$ contains a normal subgroup $D_{i,j}\leq A_i$ such that the indices $|A_i:D_{i,j}|$ and $|A_j:C_{A_j}(x)|$ are $\ep$-bounded for every $x\in D_{i,j}$.
 Let 
 \[ D_i=\bigcap_{j=1}^k D_{i,j}. \]  
 Then $D_i$ is a normal subgroup of $G$ contained in $A_i$ such that its index in $A_i$ is $(\ep,k)$-bounded and every element $x$ of $D_i$ has $(\ep,k)$-boundedly many $A_j$-conjugates, for every $j=1,\dots,k$. 
   Moreover, as $G=A_1\cdots A_k$ and $D_i$ is normal in $G$, the size of the conjugacy class 
   \[ |x^G| = | ( \cdots (x^{A_1})  \cdots )^{A_k}| \]  
   is $(\ep,k)$-bounded for every $x \in D_i$. 
   
      Note that the normal subgroup $D=D_1\cdots D_k$ has $(\ep,k)$-bounded index in $G$. Moreover, every element of $D$ is a product of at most $k$ elements from $D_1, \dots , D_k$, so it   has $(\ep,k)$-boundedly many $G$-conjugates.
			
It follows follows from the proof of Neumann's BFC-theorem \cite{neu} that the order of $D'$ is $(\ep,k)$-bounded (see also \cite{GuMa,wie}).
\end{proof}

We can now prove our last main result, Theorem \ref{p_fitt}.
\begin{thmB} Let $G$ be a finite group admitting a group of coprime automorphisms $A$. Assume that $G=[G,A]$ and for any prime $p$ dividing the order of $G$ there is an $A$-invariant Sylow $p$-subgroups $P$ such that \[\Pr([P,A], [P,A]^x) \geq \epsilon\] for all $x \in G$. Then $G$ is bounded-by-abelian-by-bounded.
\end{thmB}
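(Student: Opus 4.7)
The plan is to combine Theorem \ref{Op}, Theorem \ref{mainA}, and Neumann's BFC theorem, using Lemma \ref{product} for the final assembly.

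The first observation is that the hypothesis of Theorem \ref{mainA} follows from the stronger one here. By Theorem \ref{Op}, for each prime $p\in\pi(G)$ the order of $[P,A]O_p(G)/O_p(G)$ is $\epsilon$-bounded, say by $M$. For distinct primes $p\neq q$, the subgroups $H_p:=[P,A]\cap O_p(G)$ and $H_q:=[Q,A]\cap O_q(G)$ have index at most $M$ in $[P,A]$ and $[Q,A]$ respectively and commute pairwise, since both $O_p(G)$ and $O_q(G)$ are contained in the nilpotent Fitting subgroup $F(G)$ and have coprime orders. Hence $\pr([P,A],[Q,A])\ge 1/M^{2}$ and Theorem \ref{mainA} yields that $|G:F_2(G)|$ is $\epsilon$-bounded. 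It therefore suffices to exhibit a $G$-normal subgroup $R\le F_2(G)$ with $|F_2(G):R|$ and $|R'|$ both $\epsilon$-bounded.

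Next I reduce to a bounded set of primes. Specialising the hypothesis to $x=1$ gives $\pr([P,A],[P,A])\ge\epsilon$. For $p>M$, the $p$-group $[P,A]O_p(G)/O_p(G)$ of order at most $M<p$ is trivial, so $[P,A]\le O_p(G)$. Moreover, by the classical estimate that a non-abelian $p$-group $P$ satisfies $\pr(P,P)\le(p^{2}+p-1)/p^{3}$, there is an $\epsilon$-bounded threshold $m$ such that $\pr([P,A],[P,A])\ge\epsilon$ forces $[P,A]$ to be abelian whenever $p\ge m$. Hence outside an $\epsilon$-bounded set $\Sigma$ of ``small'' primes, $[P,A]$ is an abelian subgroup of $O_p(G)$.

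For each $p\in\Sigma$, the full strength of the conjugation hypothesis $\pr([P,A],[P,A]^x)\ge\epsilon$ for every $x\in G$ is used, via Lemma \ref{bound_m} and an argument in the spirit of Proposition \ref{mainAut}, to extract a $G$-normal subgroup $U_p\le G$ of $\epsilon$-bounded index and a bounded-index subgroup $B_p\le[P,A]$ such that $[B_p,U_p]^G$ has $\epsilon$-bounded order. Since $|\Sigma|$ is $\epsilon$-bounded, the intersection $U:=\bigcap_{p\in\Sigma}U_p$ is $G$-normal of $\epsilon$-bounded index, and Lemma \ref{product} applied to the $\epsilon$-boundedly many $G$-normal subgroups (the $[B_p,U_p]^G$ together with the abelian contributions from primes outside $\Sigma$) yields the required $R$. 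The main obstacle is precisely this final assembly: producing a single $G$-normal bounded-by-abelian-by-bounded substructure without a dependence on $|\pi(G)|$. The large-prime argument is essential here, as it confines all non-abelian contributions to the bounded set $\Sigma$.
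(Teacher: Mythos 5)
Your step 1 (deriving $\pr([P,A],[Q,A])\ge 1/M^2$ from Theorem~\ref{Op} and coprimality inside $F(G)$, then applying Theorem~\ref{mainA}) is correct and a nice observation, but it only yields $|G:F_2(G)|$ bounded, whereas the paper gets the stronger statement that $|G:F(G)|$ itself is $\epsilon$-bounded by combining Theorem~\ref{Op} with Lemma~2.4 of \cite{AGScoprime}. This difference matters for what follows: the paper's subsequent argument uses the \emph{nilpotence} of the bounded-index normal subgroup it works inside, which you lose by stopping at $F_2(G)$. Your step 2 (confining non-abelian, non-$O_p$ behaviour to a bounded set $\Sigma$ of primes via $\pr(P,P)\le(p^2+p-1)/p^3$) is a correct and natural reduction, though the paper bypasses it.

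The genuine gap is in step 3. You want to extract, for each $p\in\Sigma$, a $G$-normal $U_p$ of bounded index and $B_p\le[P,A]$ with $[B_p,U_p]^G$ of bounded order ``via Lemma~\ref{bound_m} and an argument in the spirit of Proposition~\ref{mainAut}''. But Proposition~\ref{mainAut} (and any variant of its argument) requires $\pr(H,G)\ge\epsilon$; the hypothesis here only controls $\pr([P,A],[P,A]^x)$, i.e.\ commuting of $[P,A]$ against its own $G$-conjugates, and gives no lower bound whatsoever on $\pr([P,A],G)$. There is no route from the conjugation hypothesis to the ``centralizer of bounded index in $G$'' input that Lemma~\ref{bound_m} needs with $K=G$. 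The final assembly is also incoherent as written: the $[B_p,U_p]^G$ you propose are already supposed to be of \emph{bounded order}, so it is unclear what applying Lemma~\ref{product} to them is meant to produce, and you never verify that the pieces you feed into Lemma~\ref{product} have pairwise commuting probability bounded below — which is precisely what Lemma~\ref{product} requires.

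The mechanism you are missing is the one the paper actually uses, and it avoids Proposition~\ref{mainAut} entirely. Once $|G:F(G)|$ is bounded, a short argument with Lemma~\ref{easy0} shows $N:=[F,A]^G$ has bounded index in $G$, and $N\le F(G)$ is nilpotent. Writing $N=\prod_{i=1}^t[F,A]^{g_i}$ with $t$ bounded, the Sylow $p$-subgroup of $N$ is $\prod_i[P,A]^{g_i}$ where $P=O_p(G)$ is the Sylow $p$-subgroup of $F$; these factors are each normal in $N$, there are boundedly many of them, and \emph{their pairwise commuting probability is $\ge\epsilon$ directly from the conjugation hypothesis} (using $P\le\tilde P$ for the $A$-invariant Sylow $p$-subgroup $\tilde P$ of $G$ and Lemma~\ref{lem:basic}(2)). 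This is exactly the setting of Lemma~\ref{product}, which then shows each Sylow $p$-subgroup of $N$ is bounded-by-abelian-by-bounded with bounds independent of $p$; hence for $p$ large the Sylow subgroup is abelian, and $N$ is a direct product of boundedly many bounded-by-abelian-by-bounded factors with an abelian factor. That is where the uniformity in $p$ really comes from — not from a Proposition~\ref{mainAut}-style commutator estimate.
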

\begin{proof} Let $F=F(G)$. It follows from Proposition \ref{Op} that there exists some $\ep$-bounded integer  such that
\[|[P,\al]F/F|\le m\]
for every $\al$-invariant Sylow subgroup $P$ of $G$. Then the index $|G : F|$ is $\epsilon$-bounded by Lemma 2.4 of \cite{AGScoprime}.

 We will now show that $[F,\al]^G$
 has $\ep$-bounded index in $G$. Consider the quotient group $\bar G=G/[F,\al]^G$.
  As $\bar G$ is the union of $\ep$-boundedly many cosets $\bar F\bar x_1, \dots, \bar F\bar x_k$, it follows that $I_{\bar G}(\al)=\{[\bar x_1,\al],\dots, [\bar x_k,\al]\}$ has $\ep$-bounded cardinality, therefore $\bar G$ has $\ep$-bounded order by Lemma \ref{easy0}, as required. 

So now it is sufficient to prove that $N=[F,\al]^G$ is  bounded-by-abelian-by--bounded. 
As $[F,\al]$ is normal in $F$, which has $\ep$-bounded index in $G$, it follows that $N$ is a product of $\ep$-boundedly many normal subgroups, all conjugate to $[F,\al]$, 
 say 
 \[ N= \prod_{i=1}^t [F,\al]^{g_i}, \]
  where $g_i \in G$ and $t$ is $\ep$-bounded. 
If $p$ is a prime and $P$ is the Sylow $p$-subgroup of $F$, then $[P, \al]$ is the Sylow $p$-subgroup of $[F, \al]$. Actually $[P, \al]=O_p([F,\al])$ and so 
$[P, \al]$ is normal in $F$.
 Therefore the Sylow $p$-subgroup $[P, \al]^{G}$ of $N$ is the product of  $[P, \al]^{g_i}$, for $i=1, \dots , t$,  where each $[P, \al]^{g_i}$  is normal in $N$ and $t$ is $\ep$-bounded. Since $P$ is contained in a Sylow $p$-subgroup of $G$, by the assumptions we have that 
\[ \pr \left([P, \al]^{g_i},[P, \al]^{g_j}\right)\ge\epsilon \]
for every $i, j=1, \dots , t$.  It follows from Lemma \ref{product} that $[P, \al]^{G}$ is bounded-by-abelian-by-bounded. 

Since the bounds do not depend on the prime $p$, it follows that for $p$ large enough $[P, \al]^{G}$ 
 is abelian. 
So we deduce that $N$ is a direct product of $\ep$-boundedly many bounded-by-abelian-by-bounded Sylow subgroups and an abelian subgroup. 
 We conclude that $N$ is bounded-by-abelian-by-bounded. The proof is complete.
\end{proof}

\end{document}